\theoremstyle{plain}
\newtheorem{thm}{Theorem}
  \theoremstyle{plain}
  \newtheorem{cor}[thm]{Corrolary}
  \theoremstyle{plain}
  \newtheorem{lem}[thm]{Lemma}
\DeclareMathOperator{\tower}{\operatorname{Tow}}
\DeclareMathOperator{\codim}{\operatorname{Codim}}
\begin{document}
\begin{frontmatter}[classification=text]

\title{Popular Progression Differences in Vector Spaces II} 

\author[jf]{Jacob Fox \thanks{Supported by a Packard Fellowship and by NSF grant DMS-1855635.}}
\author[htp]{Huy Tuan Pham\thanks{Supported by the Stanford Undergraduate Research Institute in Mathematics (SURIM).}}

\begin{abstract}
Green used an arithmetic analogue of Szemer\'edi's celebrated regularity lemma to prove the following strengthening of Roth's theorem in vector spaces. 
For every $\alpha>0$, $\beta<\alpha^3$, and prime number $p$, there is a least positive integer $n_p(\alpha,\beta)$ such that if $n \geq n_p(\alpha,\beta)$, then for  every subset of $\mathbb{F}_p^n$ of density at least $\alpha$ there is a nonzero $d$ for which the density of three-term arithmetic progressions with common difference $d$ is at least $\beta$. We determine for $p \geq 19$ the tower height of $n_p(\alpha,\beta)$ up to an absolute constant factor and an additive term depending only on $p$. In particular, if we want half the random bound (so $\beta=\alpha^3/2$), then the dimension $n$ required is a tower of twos of height $\Theta \left((\log p) \log \log (1/\alpha)\right)$.  It turns out that the tower height in general takes on different forms in several different regions of $\alpha$ and $\beta$, and different arguments are used both in the upper and lower bounds to handle these cases. 
\end{abstract}
\end{frontmatter}

\section{Introduction}
The game Set consists of a deck of cards. Each card has four attributes: color, shape, shading, and number, and there are three possibilities for each attribute, for a total of $3^4=81$ cards. The goal of the game is to find a ``set'', which is a triple of distinct cards in which each attribute is the same or all different on the three cards. How many cards can there be which contains no set? We can naturally view each card as an element of $\mathbb{F}_3^4$, and a set is then a line (or, equivalently, a three-term arithmetic progression) in this vector space. While a seemingly recreational problem, its generalization to higher dimensions is the well-known {\it cap set problem}, and is related to major open problems in combinatorics, number theory, and computer science. It asks: what is the maximum size of a subset of $\mathbb{F}_3^n$ which does not contain a line? 

Recently, there was a breakthrough on the cap set problem by Croot, Lev, and Pach \cite{CLP} using the polynomial method. Building on this breakthrough,
Ellenberg and Gijswijt \cite{EG} proved that any subset of $\mathbb{F}_{3}^{n}$ which  does not contain a line has at most $O(2.756^{n})$
elements. In the other direction, an earlier construction of Edel \cite{Edel} gives a subset of $\mathbb{F}_{3}^{n}$ with $\Omega(2.217^{n})$ elements that contains no line.

The senior author can add a personal note. His son David, who was six at the time, was playing with Set cards, and observed that he could find nine cards for which he could make twelve sets among them. In other words, he found an affine two-dimensional subspace of $\mathbb{F}_3^4$. This naturally led us to study the {\it multidimensional cap set problem}: what is the maximum size, denoted by $r(n,m)$, of a subset of $\mathbb{F}_3^n$ which does not contain an affine $m$-dimensional subspace? One might hope that the polynomial method proof of the cap set result would naturally extend to give a good bound for the multidimensional cap set problem. This fails due to the complexity of the linear system defining an affine subspace of dimension $m$ for $m>1$. Nevertheless, there is a simple averaging argument which establishes a multidimensional generalization using the cap set result and induction on the dimension $m$ of the desired affine subspace, which we next describe.

The arithmetic triangle removal lemma of the first author and Lov\'asz \cite{FL} as discussed in detail later in the introduction implies a supersaturation extension of the cap set result. This says that any subset of $\mathbb{F}_3^n$ of density $\alpha$ has three-term arithmetic progression density at least $\alpha^{C}$, where $C \approx 13.901$ is an explicit constant. This includes counting trivial three-term arithmetic progressions (those with common difference zero). 

Let $A$ be a subset of $\mathbb{F}_3^n$ of size $r(n,m)$ which does not contain an affine $m$-dimensional subspace, so $A$ has density $\alpha:=r(n,m)/N$ with $N:=|\mathbb{F}_3^n|=3^n$. From the supersaturation result mentioned above, the set $A$ has at least $\alpha^CN^2$ three-term arithmetic progressions. Let $d$ be a nonzero element of $\mathbb{F}_3^n$ such that the number of three-term arithmetic progressions in $A$ with common difference $d$ is maximum. 
As there are $\alpha N$ trivial arithmetic progressions in $A$, by averaging, the number of three-term arithmetic progressions with common difference $d$ is at least $\left(\alpha^C N^2-\alpha N\right)/\left(N-1\right)$, which is asymptotically $(1-o(1))\alpha^C N$ if $\alpha \geq N^{-1/13}$. Let $S$ be a subspace of $\mathbb{F}_3^n$ of dimension $n-1$ not containing $d$. Let $A'$ be the subset of $S$ which contains all elements $x$ for which the entire three-term arithmetic progression $x,x+d,x+2d$ is in $A$. We have $|A'| \geq (1-o(1))\alpha^{C}N/3$, so $A'$ has density at least $(1-o(1))\alpha^C$ in $S$. If $A'$ contains a $(m-1)$-dimensional subspace, then adding $0$, $d$, and $2d$ to the elements of this affine subspace, we get a $m$-dimensional subspace contained in $A$, a contradiction. Hence, $|A'| \leq r(n-1,m-1)$. Putting this together and using induction on $m$, we easily obtain $$r(n,m) \leq (1+o(1))N^{1-C^{-m}}.$$ 

In the other direction, for $m \geq 1$, a random set of density $\alpha$ has probability at most $\alpha^{3^m}$ of containing a particular affine $m$-dimensional subspace, and there are less than $N^{m+1}$ such subspaces, implying that $$r(n,m) \geq N^{1-(m+1)3^{-m}}.$$ One can improve on this bound 
a bit by considering more advanced probabilistic methods (see \cite{AlSp}) such as the alteration method,
the Lov\'asz local lemma, or considering the $m$-dimensional cap set
process, but none of these would improve on the constant $3$.

The lower and upper bounds for $r(n,m)$ have the same form $N^{1-\epsilon_m}$ with $\epsilon_m \to 0$ exponentially fast in $m$, but with different exponential constants ($3$ in the lower bound and $C$ in the upper bound). Is $3$, the lower bound given by considering a random set, the right exponential constant? 

Note that the upper bound argument picks the nonzero $d$ for which $A$ has the most three-term arithmetic progressions with common difference $d$, while the bound we use only considers the average, which might be substantially less. Indeed, a result of Green \cite{Green05} shows that we can find a ``popular'' $d$  for which the density of three-term arithmetic progressions with common difference $d$ is arbitrarily close to the random bound of $\alpha^3$, provided that the dimension of the space is sufficiently large, while the global density of three-term arithmetic progressions may be substantially smaller. 
 
\begin{thm}[Green \cite{Green05}]
\label{thm:Original theorem} For each prime $p$, $\alpha>0$, and $\beta<\alpha^3$,
there is a least positive integer $n_{p}(\alpha,\beta)$ such that the
following holds. For each $n\geq n_{p}(\alpha,\beta)$ and every subset
$A$ of $\mathbb{F}_{p}^{n}$ with density at least $\alpha$, there is a nonzero
$d$ in $\mathbb{F}_{p}^{n}$ such that the density of three-term
arithmetic progressions with common difference $d$ in $A$
is at least $\beta$. 
\end{thm}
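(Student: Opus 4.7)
The plan is to apply an arithmetic regularity lemma in $\mathbb{F}_p^n$ to reduce the problem to counting in the structured piece, and then to find a popular common difference by averaging over a codimension-bounded subspace. Set $\epsilon = \epsilon(\alpha,\beta)>0$ small (to be chosen). I would invoke Green's arithmetic regularity lemma at scale $\epsilon$ for $1_A$: this produces a subspace $H\leq \mathbb{F}_p^n$ of codimension $M=M_p(\epsilon)$ (a tower in $\epsilon^{-1}$) together with the decomposition $1_A = f + g$, where $f(x) = \mathbb{E}_{h\in H} 1_A(x+h)$ is the conditional density on the coset $x+H$ (so $f$ is $H$-invariant and $[0,1]$-valued with average $\alpha$), and $g = 1_A - f$ is $\epsilon$-uniform relative to $H$, meaning $|\widehat{g}(\xi)| \le \epsilon$ for every $\xi\in H^\perp\setminus\{0\}$ (equivalently, $g$ has $U^2$-norm at most $\epsilon$ on each coset). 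As long as $n \ge M+1$ the subspace $H$ is nontrivial, and we restrict the search for a popular difference to $d\in H\setminus\{0\}$.

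Next I would compute the three-term progression count $T(A,d) = \mathbb{E}_{x} 1_A(x)1_A(x+d)1_A(x+2d)$ for $d\in H$. Since $d\in H$, every progression with difference $d$ lies entirely in one coset of $H$, so substituting $1_A = f+g$ yields a main term $\mathbb{E}_{y}\, c(y)^3$, where $c(y)$ is the density of $A$ on the coset $y+H$, plus seven cross terms, each involving at least one factor of $g$. The standard three-term counting lemma (Cauchy--Schwarz followed by Plancherel in the dual of $\mathbb{F}_p^n/H$) bounds each cross term by $O(\epsilon)$ when $d$ ranges over all of $H$ (including zero). Averaging $d$ over the nonzero elements of $H$ inflates this only by a factor $|H|/(|H|-1)\le 2$, so
\[
\mathbb{E}_{d\in H\setminus\{0\}} T(A,d) \;\ge\; \mathbb{E}_{y} c(y)^3 \;-\; O(\epsilon) \;\ge\; \alpha^3 - O(\epsilon),
\]
where the last step uses convexity and $\mathbb{E}_y c(y)=\alpha$. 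Choosing $\epsilon$ so the loss is less than $\alpha^3-\beta$, there must exist a nonzero $d\in H$ with $T(A,d)\ge\beta$. This gives the theorem with $n_p(\alpha,\beta) \le M_p(\epsilon)+1$.

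The main obstacle is the counting lemma step with $d$ restricted to the subspace $H$: one needs the error terms to be controlled by the $H$-relative uniformity of $g$, not by its full Fourier uniformity on $\mathbb{F}_p^n$. The correct framework is to perform the Fourier expansion on each coset $y+H$ separately (so the dual group is $H^\vee$) and observe that $f$ is constant on these cosets while $g$ has small $H^\vee$-Fourier coefficients by construction; then the trilinear Gowers--Cauchy--Schwarz gives the $O(\epsilon)$ bound term by term. A secondary subtlety is that we need $|H|\ge 2$, which only requires $n\ge M+1$, and we must handle the trivial progression $d=0$ being excluded from the average, but this costs a negligible factor since $\epsilon$ is chosen much smaller than any threshold.

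The flexibility $\beta<\alpha^3$ (strict) is exactly what lets the regularity-based argument succeed: any positive gap can be absorbed by the uniform error, at the cost of a tower-type codimension $M_p(\epsilon)$. This proof shows existence of $n_p(\alpha,\beta)$; sharper tower-height bounds, which form the subject of the paper, will require a more refined decomposition and matching lower-bound constructions.
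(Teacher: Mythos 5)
Your overall outline reproduces Green's original argument for Theorem~\ref{thm:Original theorem}, which the paper cites (\cite{Green05}) and does not reprove; the quantitative upper bounds in Sections~\ref{sectionupperbound} recover it as a byproduct, and the concluding remarks sketch the same regularity-plus-counting route in the proof of a strengthening. So the approach is right in spirit. However, the statement of the uniformity condition is wrong in a way that matters. You write that $g = 1_A - f$ satisfies $|\widehat{g}(\xi)| \le \epsilon$ for every $\xi \in H^{\perp}\setminus\{0\}$, and call this equivalent to $g$ having small $U^2$-norm on each coset. These are not equivalent, and the first is vacuous: since $f = (1_A)_H$ is precisely the orthogonal projection of $1_A$ onto $H$-invariant functions, $\widehat{g}(\xi) = 0$ for \emph{every} $\xi \in H^{\perp}$. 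That condition therefore provides no uniformity at all. The control you actually need is a Fourier/$U^2$ bound on $g$ restricted to each coset $y+H$ (viewing the restriction as a function on $H$, with dual group $\widehat{H}$), and that is a genuinely different condition. It is provided by Green's arithmetic regularity lemma, which asserts that \emph{all but an $\epsilon$-fraction} of the cosets of $H$ are $\epsilon$-uniform in this per-coset sense; equivalently one can, as in the proof of Lemma~\ref{lem:mean cubed density increment_large}, apply the weak regularity lemma (Lemma~\ref{lem:(Weak-regularity-lemma.)}) separately inside each coset and pass to a common refinement. Global smallness of $\widehat{g}$ over $\widehat{G}$ does not imply the per-coset bound (the relevant Parseval identity aggregates $|G/H|$ global coefficients into a single coset coefficient), so you cannot deduce the coset-level control ``by construction'' from the decomposition you state.

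Once the correct per-coset uniformity is in hand, the rest of your computation goes through, and in fact more easily than you indicate. For $d \in H$, the three points $x, x+d, x+2d$ lie in a single coset, and $f$ is constant there; since $g$ averages to zero on every coset of $H$, \emph{every} cross term with at most two factors of $g$ vanishes identically (not merely $O(\epsilon)$) after averaging over $d \in H$. The only nontrivial error term is the trilinear one in $g$, namely $\mathbb{E}_{y}\,\mathbb{E}_{h,d\in H}\,g_y(h)g_y(h+d)g_y(h+2d)$, and bounding it requires exactly the per-coset Fourier control above (plus discarding the small exceptional set of non-uniform cosets). With those two corrections --- replacing the vacuous $H^{\perp}$-condition by per-coset uniformity supplied by the regularity lemma, and noting the exact vanishing of the lower-order cross terms --- your argument is a correct proof of Green's theorem. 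As you observe, this yields only a tower-type bound $n_p(\alpha,\beta)\le M_p(\epsilon)+1$; the paper's contribution is to replace this with the density-increment on the mean cube density $b(H)$ (Lemmas~\ref{lem:mean cubed density increment_small} and~\ref{lem:mean cubed density increment_large}), which gives much sharper, essentially tight, tower heights.
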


The condition $\beta < \alpha^3$ is necessary, as a random set shows that $n_p(\alpha,\beta)$ cannot exist for $\beta > \alpha^3$, and a more involved construction also rules out the case $\beta=\alpha^3$.

Thus, in the upper bound argument for $r(n,m)$, if $n \geq n_3(\alpha,\beta)$ with $\beta=\alpha^{3+o(1)}$, we could improve the lower bound on the density of $A'$ from $(1-o(1))\alpha^C$ to $\beta$, which would imply by induction on $m$ that $3$ is the correct exponential constant in $r(n,m)$. The problem with this approach is that $n_3(\alpha,\beta)$ may be much larger than $n$ when $\alpha \le N^{-C^{-m}}$. The proof of Green \cite{Green05} uses an arithmetic regularity lemma (an arithmetic analogue of Szemeredi's celebrated graph regularity lemma \cite{Sz76}) and gives a tower-type upper bound for $n_3(\alpha,\beta)$. This is much larger than the bound that would be needed for the approach above in estimating $r(n,m)$ to work.  In the first part \cite{FPI} of this two-part sequence of papers, we determined that, for $p$ and $\alpha$ fixed and $\beta=\alpha^3-\epsilon$, the function $n_p(\alpha,\beta)$  grows as an exponential tower of $p$'s of height $\Theta(\log(1/\epsilon))$ as $\epsilon\to 0$. This might suggest that the bound on $n_3(\alpha,\beta)$ is likely too large to be useful in determining the exponential constant for $r(n,m)$. Still, we allow for a considerably smaller value of $\beta$, so one might still have hope for this approach. However, the main result in this paper determines the order of the tower for $n_p(\alpha,\beta)$ for $p \geq 19$, and the tower height grows for $\beta=\alpha^{3+o(1)}$, giving stronger evidence that this approach fails to determine the exponential constant in $r(n,m)$.

We next describe the growth of $n_p(\alpha,\beta)$, which has different behavior depending on the choice of $\alpha$ and $\beta$. We first handle the case $\beta>0$ is sufficiently small as a function of $\alpha$ and $p$. By supersaturation, the cap set problem is equivalent to the special case of estimating $n_p(\alpha,\beta)$ when $\beta>0$ is sufficiently small as a function of $\alpha$ and $p$. For such $\beta$, 
$n_p(\alpha,\beta)$ is the least integer such that for all $n \geq n_p(\alpha,\beta)$, every subset of $\mathbb{F}_p^n$ of density at least $\alpha$ contains a nontrivial three-term arithmetic progression.
In particular, for $\beta$ sufficiently small, we can use the recently established polynomial
bound by the first author and Lov\'asz \cite{FL} on the arithmetic
removal lemma to show that $n_{p}(\alpha,\beta)$ is logarithmic in
$1/\alpha$. Precisely, for $0<\beta\leq\alpha^{C_{p}}/2$, where
$C_{p}=\Theta(\log p)$ is an explicit constant, we have $n_{p}(\alpha,\beta)=\Theta(\log(1/\alpha))$.
The lower bound follows by considering the largest possible subset
of $\mathbb{F}_{p}^{n}$ without a nontrivial three-term arithmetic
progression; Alon, Shpilka, and Umans \cite{ASU} observed that a variant of Behrend's
construction shows that such a subset has size at least $\left((p+1)/2\right)^{n-o(n)}$
for $p \geq 3$ fixed. To show the upper bound, assume $n\geq C_{p}\log_p(2/\alpha) = \Theta(\log (1/\alpha))$.
By the arithmetic removal lemma, as discussed in the next
paragraph, any subset of $\mathbb{F}_{p}^{n}$ of density $\alpha$
has three-term arithmetic progression density (this includes
those with common difference zero) at least $\alpha^{C_{p}}$. As
the density of three-term arithmetic progressions with common difference
zero is the density $\alpha$ of the set, by averaging, there is a
nonzero $d$ for which the density of three-term arithmetic progressions
with common difference $d$ is at least $\frac{\alpha^{C_{p}}p^{n}-\alpha}{p^{n}-1}\geq\alpha^{C_{p}}/2$.

A version of Green's arithmetic removal lemma in $\mathbb{F}_{p}^{n}$
from \cite{FL} states that for each $\epsilon>0$ there is $\delta=\delta(\epsilon)$
such that if $X=\{x_{i}\}_{i=1}^{m}$, $Y=\{y_{i}\}_{i=1}^{m}$, $Z=\{z_{i}\}_{i=1}^{m}$
are subsets of $\mathbb{F}_{p}^{n}$ with $m\geq\epsilon p^{n}$ and
$x_{i},y_{i},z_{i}$ form a three-term arithmetic progression for
each $i$, then there are at least $\delta p^{2n}$ three-term arithmetic
progressions $x_{i},y_{j},z_{k}$. Green's proof uses the arithmetic
regularity lemma and gives an upper bound on $1/\delta$ which is
a tower of twos of height $\epsilon^{-O(1)}$. Recently, it was observed
by Blasiak et al.~\cite{BCCGNSU} and Alon that the recent breakthrough
on the cap set problem extends to prove a multicolor sum-free result,
and results of Kleinberg, Sawin, and Speyer \cite{KSS}, Norin \cite{Norin},
and Pebody \cite{Pebody} show that the bound for the multicolor sum-free
result is sharp. Using this result, the first author and Lov\'asz \cite{FL} proved $\delta(\epsilon)\geq\epsilon^{C_{p}}$, which is essentially tight.
Note that taking $X=Y=Z=A$, we have that any subset $A\subset\mathbb{F}_{p}^{n}$
of density $\alpha$ has three-term arithmetic progression density
at least $\alpha^{C_{p}}$.

It is an interesting problem to understand how $n_{p}(\alpha,\beta)$
grows as we increase $\beta$. This function is on the order of $\log(1/\alpha)$
when $0<\beta\leq\alpha^{C_{p}}/2$, and is a tower of $p$'s of height
$\Theta(\log(1/\epsilon))$ for $\alpha$ fixed and $\epsilon$ small, where $\epsilon = \alpha^3-\beta$. 

We determine for $p \geq 19$ the tower height of $n_p(\alpha,\beta)$ up to an absolute constant factor and an additive constant depending on $p$. One of the difficulties in doing this is that the tower height takes a different form in different regions of $\alpha$ and $\beta$, and the proofs use additional ideas beyond those in the proof of the main result in \cite{FPI} both in the upper and lower bounds.

We first discuss the case when $\alpha \leq 1/2$. When $0<\beta<\alpha^{3+e^{-133}}$, the discussion above and Theorem \ref{thm:largebound} together show that $n_p(\alpha,\beta)$ grows as an exponential tower of constant height (depending on $p$) with $\log(1/\alpha)$ on top. So we  assume $\beta>\alpha^{3+e^{-133}}$. This case splits into three cases, depending on whether or not $\epsilon$ is small, in an intermediate range, or large, with respect to $\alpha$ and $p$, where $\epsilon=\alpha^3-\beta$. In particular, when $\epsilon<\alpha^3/(\log (1/\alpha))^{\log p}$, $n_p(\alpha,\beta)$ grows as a tower of $p$'s of height $\Theta(\log (\alpha^3/\epsilon))\pm O_p(1)$.\footnote{Here, and throughout, $\pm O_p(1)$ means up to an additive error which only depends on $p$.} When $\alpha^3/(\log (1/\alpha))^{\log p} \le \epsilon < \alpha^3(1-2^{-8-8C_p})$, $n_p(\alpha,\beta)$ grows as a tower of $p$'s of height $(\log p)\log \log (1/\alpha) \pm O_p(1)$. When $\epsilon \ge \alpha^3(1-2^{-8-8C_p})$, $n_p(\alpha,\beta)$ grows as a tower of $p$'s of height $\Theta\left(\log\left(\frac{\log(1/\alpha)}{\log(\alpha^{3}/\beta)}\right)\right)\pm O_p(1)$ with a $1/\beta$ on top. This is summarized in the theorem below. We conjecture these bounds should also hold for $p<19$. The upper bounds still hold when $p<19$, as well as the lower bound when $\epsilon$ is small. The only issue is the case $\epsilon$ is large. In this case, the lower bound construction fails as it uses bounds on the largest subset of $\mathbb{F}_p^n$ without a three-term arithmetic progression, and the known bounds for this are not good enough to imply the desired estimates in this case. 

\begin{thm}
\label{main2} Let $p\geq 19$ be prime, $0<\alpha \le 1/2$, and $\alpha^3>\beta>\alpha^{3+e^{-133}}$. Recall that $n_{p}(\alpha,\beta)$
is the least positive integer such that for each $n\geq n_{p}(\alpha,\beta)$
and every subset $A$ of $\mathbb{F}_{p}^{n}$ with density at least
$\alpha$, there is a nonzero $d$ in $\mathbb{F}_{p}^{n}$ such that
the density of three-term arithmetic progressions with common difference
$d$ in $A$ is at least $\beta$. Let $\epsilon=\alpha^3-\beta$. 

\begin{itemize} 
\item If $\epsilon<\alpha^3/(\log (1/\alpha))^{\log p}$, then $n_p(\alpha,\beta)$ grows as a tower of $p$'s of height $\Theta\left(\log(\alpha^3/\epsilon)\right)\pm O_p(1)$. \item If $\alpha^3/(\log (1/\alpha))^{\log p} \le \epsilon < \alpha^3(1-2^{-8-8C_p})$, then 
$n_p(\alpha,\beta)$ grows as a tower of $p$'s of height $\Theta((\log p)\log \log (1/\alpha)) \pm O_p(1)$. \item Otherwise, $\epsilon \ge \alpha^3(1-2^{-8-8C_p})$, and  we have $n_p(\alpha,\beta)$ grows as a tower of $p$'s of height $\Theta\left((\log p)\log\left(\frac{\log(1/\alpha)}{\log(\alpha^{3}/\beta)}\right)\right)\pm O_p(1)$ with a $1/\beta$ on top. 
\end{itemize}
\end{thm}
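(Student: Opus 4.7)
The plan is to prove matching upper and lower bounds in each of the three regimes. The upper bounds follow the Green \cite{Green05} framework: one applies an arithmetic regularity lemma in $\mathbb{F}_p^n$ to partition the ambient space into affine subspaces on most of which $A$ is Fourier $\eta$-regular, observes by a Fourier expansion that on such a subspace of density $\alpha_S$ the three-term AP density with common difference $d$ is $\alpha_S^3 + O(\eta)$ for essentially every nonzero $d$, and then uses convexity across the pieces of the partition to produce a popular $d$ with 3-AP density at least $\beta$. The dimension loss produced by the regularity lemma is a tower of $p$'s whose height is essentially $\log(1/\eta)$, so in each case the task reduces to calibrating $\eta$ correctly.

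In the small-$\epsilon$ regime, $\eta$ must be taken polynomially small in $\epsilon$ so that the Fourier error is dominated by the slack $\alpha^3-\beta=\epsilon$; this yields a tower of height $\Theta(\log(\alpha^3/\epsilon))$. In the intermediate regime, the binding constraint is that $\alpha_S$ must be close to $\alpha$ on many pieces, and $\eta$ need only be taken as a fixed power of $\alpha$, giving a tower of height $\Theta((\log p)\log\log(1/\alpha))$. In the large-$\epsilon$ regime the Fourier expansion alone is too crude, since the target $\beta$ is much smaller than $\alpha^3$; we instead run the regularity lemma with an even weaker parameter (costing only $\Theta((\log p)\log(\log(1/\alpha)/\log(\alpha^3/\beta)))$ tower levels) to produce an affine subspace on which $A$ is regular and of density close to $\alpha$, and then invoke the polynomial arithmetic removal lemma of \cite{FL} on that subspace to beat the weaker target $\beta$. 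The $1/\beta$ sitting on top of the tower comes from needing enough residual dimension for the removal lemma, via the bound $C_p=\Theta(\log p)$, to guarantee 3-AP density exceeding $\beta$.

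For the lower bounds we extend the iterative construction from \cite{FPI}. The construction begins with a base set in $\mathbb{F}_p^{n_0}$ tailored to the regime --- a quadratic-phase example as in \cite{FPI} for small $\epsilon$, and the Alon--Shpilka--Umans Behrend-type cap-set \cite{ASU} as a core for the large-$\epsilon$ regime --- and then lifts it through the required number of tower levels by replacing each point with an independent scaled copy of the set, tracking how the popular 3-AP density evolves at each lift. The main obstacle will be achieving the matching lower bound in the large-$\epsilon$ case: it requires that the largest nontrivially-3-AP-free subset of $\mathbb{F}_p^n$ have density above a threshold depending on $\alpha^3/\beta$, and the known Behrend-type bounds of \cite{ASU} (giving density $((p+1)/2)^{n-o(n)}/p^n$) suffice only for $p\ge 19$, which is precisely the hypothesis of the theorem. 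A secondary difficulty is absorbing all $p$-dependent losses --- from the regularity lemma, from the removal lemma, and from the lifting parameters --- into the $\pm O_p(1)$ additive term in the tower height uniformly across all three regimes, and verifying that the transitions between regimes do not cost an extra tower level.
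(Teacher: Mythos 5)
Your upper bound strategy does not match the paper's and, as described, would not give the claimed bounds. You propose the ``Green framework'': apply the arithmetic regularity lemma once, partition into affine subspaces on most of which $A$ is $\eta$-regular, Fourier-expand, and take convexity over the pieces. But Green's arithmetic regularity lemma produces a subspace whose codimension is a tower of height \emph{polynomial} in $1/\eta$, not logarithmic, so your plan of ``calibrating $\eta$'' cannot give tower heights like $\Theta(\log(\alpha^3/\epsilon))$ or $\Theta((\log p)\log\log(1/\alpha))$. Indeed your claim that ``the dimension loss produced by the regularity lemma is a tower of $p$'s whose height is essentially $\log(1/\eta)$'' is false, and the middle-regime claim that taking $\eta$ a fixed power of $\alpha$ yields a $\log\log(1/\alpha)$ tower does not follow from any known form of the regularity lemma.

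The mechanism that actually produces logarithmic tower heights is a density-increment iteration using the \emph{mean cube density} $b(H)=\mathbb{E}_g[\alpha(H+g)^3]$ as a potential function, bounded above by $\alpha$. At each step one applies only the \emph{weak} regularity lemma inside each translate $H_j$ of the current subspace and shows (via the counting lemma plus, in the harder regime, the tight arithmetic removal lemma of \cite{FL}) that if the nontrivial $3$-AP density in $H$ is below $\beta$, then one can refine to a subspace $H'$ with $b(H')-\alpha^3 > 2(b(H)-\alpha^3)+\epsilon/2$ (Lemma~\ref{lem:mean cubed density increment_small}, from \cite{FPI}) or, once $b(H)\geq 2^{8+8C_p}\beta$, the stronger multiplicative gain $b(H')/\beta > (b(H)/\beta)^{1+\tau_p}$ with $\tau_p=\Theta(1/\log p)$ (Lemma~\ref{lem:mean cubed density increment_large}). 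The first increment doubles the excess per step, forcing at most $O(\log(\alpha^3/\epsilon))$ iterations; the second gives a doubly-exponential growth of $b(H)/\beta$, which against the cap $b(H)\le\alpha$ forces at most $O((\log p)\log(\log(\alpha/\beta)/\log(\alpha^3/\beta)))$ steps. Each step only multiplies the codimension exponentially in $p$, so the iteration count becomes the tower height, and the $1/\beta$ on top comes from the per-step codimension loss $p^{\mathrm{Codim}(H)}\cdot 36/\beta^2$ and the stopping condition $|H|<2\alpha/\beta$. Your plan omits this entire mechanism, and the removal lemma is not invoked ``on a regular subspace to beat $\beta$'' but is the engine of the improved density increment inside Lemma~\ref{lem:mean cubed density increment_large}.

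Your lower-bound outline is closer to the paper's: an iterated lifting construction whose each level imposes a perturbation supported on a random linear image of a maximum cap set $A_{p,r_i}\subset\mathbb{F}_p^{r_i}$, with the cap-set density bounds of \cite{ASU} (and Alon's Behrend-type variant, needing $p\geq 19$) controlling how fast the ratios $\mu_i$ decay; you correctly identify why $p\geq 19$ enters. Two corrections though: the base construction for the small-$\epsilon$ regime in \cite{FPI} is probabilistic (random rounding on a regularity-type partition), not a quadratic-phase example; and the lift is not ``replacing each point with an independent scaled copy of the set'' in the naive product sense, but rather fixing carefully chosen vectors $v_j(x)$ so that for every $3$-AP of prefixes the relevant vectors are jointly linearly independent and no codimension-one affine hyperplane captures too many of them --- these linear-algebra conditions are what make Lemma~\ref{lem:Stability-2} (exact preservation of $3$-AP density across levels for nonzero prefix difference) go through.
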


A special case of this theorem is that we want a nonzero $d$ for
which the arithmetic progression density with common difference $d$
is at least half the random bound. In this case, for set density $\alpha$,
we get the dimension we need to guarantee such a common difference
grows as a tower of height proportional to $(\log p)\log\log(1/\alpha)$ up to an additive error depending on $p$. 
\begin{cor}
\label{cor1} The minimum dimension $n=n_{p}(\alpha,\alpha^{3}/2)$
needed to guarantee that for any subset of $\mathbb{F}_{p}^{n}$ of
density at least $\alpha$, there is a nonzero $d$ for which the density of three-term arithmetic progressions with common difference $d$ is at least half the
random bound grows as a tower of $p$'s of height $\Theta((\log p)\log\log(1/\alpha)) \pm O_p(1)$. 
\end{cor}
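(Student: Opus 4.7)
The plan is to derive Corollary \ref{cor1} as a direct specialization of Theorem \ref{main2}, taking $\beta = \alpha^3/2$, so that $\epsilon = \alpha^3-\beta = \alpha^3/2$.

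First, I would identify which of the three bullet cases of Theorem \ref{main2} applies. Since the ratio $\epsilon/\alpha^3 = 1/2$ is a fixed constant, the third bullet, requiring $\epsilon \ge \alpha^3(1-2^{-8-8C_p})$, is ruled out immediately ($1/2 < 1 - 2^{-8-8C_p}$ for all $p$). The first bullet, requiring $\epsilon < \alpha^3/(\log(1/\alpha))^{\log p}$, reduces to $(\log(1/\alpha))^{\log p} < 2$ and so forces $\alpha$ to be bounded below (when $p \ge 19$). Hence, once $\alpha$ is sufficiently small, we land squarely in the middle bullet, which yields tower height $\Theta((\log p)\log\log(1/\alpha)) \pm O_p(1)$, exactly as claimed.

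The one caveat is that Theorem \ref{main2} carries the hypothesis $\beta > \alpha^{3+e^{-133}}$, which for $\beta = \alpha^3/2$ rearranges to $\alpha^{e^{-133}} < 1/2$, i.e.\ $\alpha < 2^{-e^{133}}$. For $\alpha$ in the complementary moderate range $2^{-e^{133}} \le \alpha \le 1/2$ we have $\log\log(1/\alpha) = O(1)$, so the claimed bound $\Theta((\log p)\log\log(1/\alpha)) \pm O_p(1)$ collapses to $O_p(1)$. In this moderate range, the discussion preceding Theorem \ref{main2} (covering $0 < \beta \le \alpha^{3+e^{-133}}$) supplies a bound of the form ``tower of constant height depending on $p$ with $\log(1/\alpha)$ on top''; since $\log(1/\alpha) = O(1)$ in this range, the overall tower height is itself $O_p(1)$, and the corollary's estimate holds trivially, absorbed into the additive error. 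Combining the two subranges of $\alpha$ gives the corollary.

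I do not foresee a substantive obstacle: given Theorem \ref{main2} and the earlier discussion for small $\beta$, Corollary \ref{cor1} is essentially a bookkeeping exercise. The only mildly delicate point is confirming that the moderate-$\alpha$ regime, where the hypothesis of Theorem \ref{main2} fails, falls into the $O_p(1)$ zone from both the claimed tower height and the bound available from the earlier discussion.
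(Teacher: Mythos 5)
Your proposal is correct and takes essentially the same approach as the paper, which simply says the corollary follows by substituting $\beta=\alpha^3/2$ into Theorem \ref{main2}. You are slightly more careful in explicitly checking which bullet applies and in observing that the hypothesis $\beta>\alpha^{3+e^{-133}}$ fails for moderate $\alpha\ge 2^{-e^{133}}$, where both the claimed height $\Theta((\log p)\log\log(1/\alpha))\pm O_p(1)$ and the actual bound collapse to $O_p(1)$, so the edge case is harmlessly absorbed.
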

Corollary \ref{cor1} follows by substituting in $\beta=\alpha^3/2$ into the bound in Theorem \ref{main2}. 

If we only want to guarantee a density which is considerably smaller
than the random bound, we still get a tower-type bound by substituting in $\beta=\alpha^{3+z}$. 
\begin{cor}
\label{cor2} For $\alpha \leq 1/2$, $z<e^{-133}$ and $\alpha^z \leq 2^{-8-8C_p}$, the minimum dimension $n=n_{p}(\alpha,\alpha^{3+z})$
needed to guarantee that for any subset of $\mathbb{F}_{p}^{n}$ of
density at least $\alpha$, there is a nonzero $d$ for which the density of three-term arithmetic progressions with common difference $d$ is at least $\alpha^{3+z}$
grows as a tower of $p$'s of height $\Theta((\log p)(\log(1/z)))\pm O_p(1)$ with $1/\alpha$
on top. 
\end{cor}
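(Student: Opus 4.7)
The plan is to derive Corollary~\ref{cor2} by specializing Theorem~\ref{main2} to $\beta = \alpha^{3+z}$. Setting $\epsilon = \alpha^3 - \beta = \alpha^3(1-\alpha^z)$, I will first check that Theorem~\ref{main2} applies and identify which bullet governs. The hypothesis $z<e^{-133}$ is equivalent to $\beta > \alpha^{3+e^{-133}}$, and the hypothesis $\alpha^z \leq 1-(\log(1/\alpha))^{-\log p}$ rearranges to $\epsilon \geq \alpha^3/(\log(1/\alpha))^{\log p}$, so we satisfy the premises of the theorem and avoid its first bullet.

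In the third-bullet regime $\epsilon \geq \alpha^3(1-2^{-8-8C_p})$, the computation is direct: $\log(\alpha^3/\beta) = z\log(1/\alpha)$, so $\log(1/\alpha)/\log(\alpha^3/\beta) = 1/z$, and Theorem~\ref{main2} yields tower height $\Theta((\log p)\log(1/z)) \pm O_p(1)$ with $1/\beta$ on top. Because $1/\beta=\alpha^{-(3+z)}$ has $\log(1/\beta)=(3+z)\log(1/\alpha)$ within a constant factor of $\log(1/\alpha)$, the iterated-logarithm depths of $1/\beta$ and $1/\alpha$ differ by only $O(1)$, so a tower of height $h$ with $1/\beta$ on top equals one of height $h+O(1)$ with $1/\alpha$ on top, absorbed into $O_p(1)$.

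In the second-bullet regime $\alpha^z>2^{-8-8C_p}$, I verify that $\log(1/z)$ and $\log\log(1/\alpha)$ match. The inequality $\alpha^z>2^{-8-8C_p}$ gives $z\log(1/\alpha)<(8+8C_p)\log 2=O_p(1)$, so $\log(1/z)\geq\log\log(1/\alpha)-O_p(1)$. Combining the elementary bound $1-\alpha^z\leq z\log(1/\alpha)$ with the Corollary~\ref{cor2} hypothesis $1-\alpha^z\geq(\log(1/\alpha))^{-\log p}$ yields $z\geq(\log(1/\alpha))^{-\log p-1}$, hence $\log(1/z)\leq(\log p+1)\log\log(1/\alpha)$. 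Thus $\log(1/z)$ and $\log\log(1/\alpha)$ are comparable up to factors depending only on $p$, and the tower height $\Theta((\log p)\log\log(1/\alpha))\pm O_p(1)$ from the theorem matches the claimed $\Theta((\log p)\log(1/z))\pm O_p(1)$ in this sub-regime, with $p$-dependent factors absorbed into the $\pm O_p(1)$ slack. Appending $1/\alpha$ on top adjusts the height only by the tower inverse of $1/\alpha$, which is swallowed by $O_p(1)$ in this range.

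The main obstacle is the second sub-case, where $\log(1/z)$ and $\log\log(1/\alpha)$ agree only up to $p$-dependent multiplicative factors: handling this requires exploiting the $\pm O_p(1)$ slack in the tower-height statement together with the fact that, with $p$ treated as fixed, all $p$-dependent constants collapse into $O_p(1)$. The remainder of the derivation is routine algebraic bookkeeping of the substitution $\beta=\alpha^{3+z}$ and of the two sub-case boundaries, with no further ideas required beyond Theorem~\ref{main2}.
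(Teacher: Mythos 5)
The proposal takes exactly the paper's intended route — substitute $\beta=\alpha^{3+z}$ into Theorem~\ref{main2} and reconcile the tower-height expressions in each bullet. Your treatment of the third-bullet regime is correct: there $\log(1/\alpha)/\log(\alpha^3/\beta)=1/z$ identically, and the change of the top element from $1/\beta=(1/\alpha)^{3+z}\le(1/\alpha)^4\le p^{1/\alpha}$ to $1/\alpha$ costs only $O_p(1)$ in height.

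However, there is a genuine gap in your treatment of the second-bullet regime. You correctly derive $\log\log(1/\alpha)-O_p(1)\le\log(1/z)\le(\log p+1)\log\log(1/\alpha)$, and you correctly note that the two quantities are comparable only up to a \emph{multiplicative} factor of order $\log p$. But you then assert that this discrepancy "can be absorbed into the $\pm O_p(1)$ slack," and that step is false: an additive error of size $O_p(1)$ cannot absorb a multiplicative factor of $\log p+1$ applied to the unbounded quantity $\log\log(1/\alpha)$. Concretely, the hypothesis $\alpha^z\le1-(\log(1/\alpha))^{-\log p}$ permits $z$ as small as roughly $(1/\ln 2)(\log(1/\alpha))^{-\log p-1}$, which (for $\alpha$ small) still lies in the second bullet because then $z\log(1/\alpha)\approx(\log(1/\alpha))^{-\log p}\to 0$, so $\alpha^z\to1>2^{-8-8C_p}$. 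For such $z$ one has $\log(1/z)\approx(\log p+1)\log\log(1/\alpha)$, so the Corollary's claimed tower height $\Theta((\log p)\log(1/z))\pm O_p(1)$ becomes $\Theta((\log p)^2\log\log(1/\alpha))\pm O_p(1)$, whereas Theorem~\ref{main2} (second bullet) pins the height at $\Theta((\log p)\log\log(1/\alpha))\pm O_p(1)$. These two expressions differ by an unbounded factor of $\Theta(\log p)$ that cannot be hidden in either the $\Theta$ (if that means absolute constants, as the presence of the explicit $\log p$ suggests) or the additive $\pm O_p(1)$ term. So the reconciliation you claim for the second sub-case does not go through as written; it would need either a tighter upper bound on $\log(1/z)$ in terms of $\log\log(1/\alpha)$ with absolute constants, or an explicit acknowledgment that the implicit constants in the Corollary depend on $p$.
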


The previous results only apply for $\alpha \leq 1/2$, and there is a good reason for this. The tower height for the function $n_p(\alpha,\beta)$ actually changes behavior for $\alpha$ close to one, as given by the following theorem. It determines the tower height up to an absolute constant factor and an additive term  depending on $p$. The proof uses additional ideas both in the upper and lower bounds. 
 
\begin{thm}\label{thmverydensetight}
For $\alpha \geq 1/2$, $\gamma=1-\alpha$, $\epsilon=\alpha^3-\beta$, and  $\epsilon < \gamma^2$, we have $n_p(\alpha,\beta)$ grows as a tower 
of $p$'s of height $\Theta((\log \epsilon)/(\log \gamma))\pm O_p(1)$ with $\log_p(1/\gamma)$ on top. 
\end{thm}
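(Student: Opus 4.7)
The plan is to rephrase the problem in terms of the complement $B = \mathbb{F}_p^n \setminus A$, which has small density $\gamma = 1-\alpha \leq 1/2$. Expanding $1_A = 1-1_B$ and taking expectations yields
\[
T(d) := \mathbb{E}_x A(x)A(x+d)A(x+2d) = \alpha^3 - \Delta(d),
\]
where, writing $f = 1_B - \gamma$, $c_d = \mathbb{E}_x f(x)f(x+d)$, and $E_3(d) = \mathbb{E}_x B(x)B(x+d)B(x+2d)$,
\[
\Delta(d) = \bigl(E_3(d) - \gamma^3\bigr) - 2c_d - c_{2d}.
\]
Finding a popular $d \neq 0$ with $T(d) \geq \alpha^3 - \epsilon$ is thus equivalent to finding $d \neq 0$ with $\Delta(d) \leq \epsilon$, i.e., controlling simultaneously the pair correlations $c_d,c_{2d}$ and the triple correlation $E_3(d)-\gamma^3$ of the small set $B$.

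For the upper bound, I would apply Green's Fourier-based arithmetic regularity lemma to $B$, producing a subspace $W \leq \mathbb{F}_p^n$ of codimension $k$ on most of whose cosets $B$ is $\eta$-Fourier-uniform. With $\eta$ chosen of order a small power of $\gamma$, the codimension $k$ is forced to be a tower of $p$'s of height $\Theta(h)$ with $\log_p(1/\gamma)$ on top, where $h = (\log\epsilon)/(\log\gamma)$; the entry $\log_p(1/\gamma)$ at the top of the tower reflects the minimum dimension needed to host a quotient system carrying a set of density $\gamma$. Once this regularity is in place, the problem reduces to finding a good $d$ in the quotient $\mathbb{F}_p^n/W$ for the coset-density function $V \mapsto |B\cap V|/|V|$: on each regular coset the pair correlations collapse to $\gamma^2$ up to the prescribed error, while the triple correlation becomes a three-term average of the smoothed density on the $k$-dimensional quotient, which by a further induction (or by directly invoking the arguments of Part I \cite{FPI}) can be made close to $\gamma^3$ for an appropriate choice of $d$.

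For the lower bound, I would iterate a structured construction, generalizing that of \cite{FPI} to the very-dense regime. The seed, realized in dimension $\Theta(\log_p(1/\gamma))$, is a Behrend-type set of density $\gamma$ as in \cite{ASU}: this small seed $B_0$ already satisfies $\Delta_0(d) \geq 3\gamma^2 - O(\gamma^3) > \epsilon$ for every nonzero $d$ in the small ambient space, since sparsity forces the pair correlations $c_d$ close to $-\gamma^2$ while $E_3(d)$ stays near zero. On top of this seed I would stack $h \approx \log_{1/\gamma}(1/\epsilon)$ structural layers; each layer embeds the current set into a space of exponentially larger dimension by using it as the coset-density data of a wider structural decomposition, and introduces one new Fourier scale of magnitude $\approx \gamma^j$ at layer $j$. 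The condition $\epsilon < \gamma^2$ is exactly what guarantees that every layer preserves $\Delta(d) \geq \epsilon$ while adding only one exponential to the ambient dimension, producing the stated tower.

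The principal obstacle, requiring genuinely new ideas beyond those in \cite{FPI}, is that in the very-dense regime all three contributions to $\Delta(d)$ are of comparable order $\gamma^2$, whereas in the sparse setting of Part I a single contribution dominated. In the upper bound this forces a two-level regularity argument: $d$ must be chosen from a carefully balanced subspace of the quotient so that the pair and triple corrections cancel rather than reinforce; a naive averaging would waste a factor polynomial in $1/\gamma$ and collapse the tower. In the lower bound the iterative stacking must balance the three contributions at every scale, and the maximum number of stages for which this balance remains achievable is precisely $\log_{1/\gamma}(1/\epsilon) = (\log\epsilon)/(\log\gamma)$, which is where the tower height in the statement comes from.
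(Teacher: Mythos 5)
Your reformulation in terms of the complement $B$ and the decomposition $T(d) = \alpha^3 - \Delta(d)$ with $\Delta(d) = (E_3(d) - \gamma^3) - 2c_d - c_{2d}$ is correct, and you correctly identify both the shape of the answer (tower of height $(\log\epsilon)/(\log\gamma)$ with $\log_p(1/\gamma)$ on top) and the fact that the very-dense regime requires something beyond Part I. However, your upper bound sketch has a genuine gap: you assert that applying the arithmetic regularity lemma to $B$ with $\eta$ a power of $\gamma$ ``forces'' the codimension to be a tower of height $\Theta(h)$, but the regularity lemma only yields a tower of height polynomial in $1/\eta$, which has no a priori relation to $h = (\log\epsilon)/(\log\gamma)$, and the subsequent ``further induction'' you invoke is circular. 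The paper's actual mechanism is a refined density-increment argument on the mean cube density $b(H)$ (Lemma~\ref{bettermeancube}): each step boosts $b(H)-\alpha^3$ by a multiplicative factor $1+\Theta(1/\gamma)$ rather than the factor $2$ from Part I, so $\Theta((\log\epsilon)/(\log\gamma))$ iterations suffice to push $b(H)$ above $\alpha$. The crucial and entirely elementary source of this gain is the pointwise inequality (\ref{popdensecomp}): for \emph{every} translate $H_j$ and \emph{every} nonzero $d$ the $3$-AP density is at least $\alpha_j^3 - (1-\alpha_j)^2$, which, applied to translates whose density deficit $-\delta_j$ exceeds $5\gamma$, bounds their contribution by $O(\delta_j^2)$ rather than $O(1)$; combining this with the counting lemma on the remaining translates and a second-moment argument gives the $1/\gamma$ gain. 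None of this mechanism appears in your proposal; the claimed ``cancellation between pair and triple corrections'' is not what happens, and without the improved per-step increment your approach does not reach the correct tower height.

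For the lower bound your outline points in roughly the right direction (iterating a structured construction on top of a dense seed, matching [FPI]), but it misses the specific obstruction the paper addresses: directly carrying over the Part~I construction would assign weights $\frac{1}{\zeta}(1+\eta)\alpha$ on the dense codimension-one pieces and $0$ on the sparse ones, and for $\alpha$ close to $1$ the former exceeds $1$, which is not allowed for a weight function into $[0,1]$. The fix is to swap the two-valued weights to $1$ on the dense pieces and $(1-\zeta)^{-1}\bigl((1+\eta)\alpha - \zeta\bigr)$ on the sparse pieces, preserving the mean $(1+\eta)\alpha$ while staying in $[0,1]$; correspondingly the base exponential rate in the growth of $\mu_i$ becomes $\gamma^{-O(1)}$ rather than a constant. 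Your proposal describes a generic multi-scale Fourier stacking and does not identify this change, so it is not clear it would produce a valid weight function or the correct per-layer decrement of $\Delta(d)$.
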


We remark that when $\epsilon\ge \gamma^2$, a simple argument shows that $n_p(\alpha,\beta) \le 3\log_p(1/\gamma)$ is not of tower type. 

\vspace{0.1cm}
\noindent \textbf{Organization.} We prove the tight upper and lower bounds in Theorem \ref{main2} in Sections \ref{sectionupperbound} and \ref{sectionlowerbound}, respectively. In Section \ref{verydensesection}, 
we discuss the case where the set density $\alpha$ is close to $1$, and in particular prove Theorem \ref{thmverydensetight}. In the concluding remarks we discuss many related problems and results.

For the sake of clarity of presentation, we omit floor and
ceiling signs where they are not crucial. We use $\log$ to denote the logarithm base $2$, and $\ln$ to denote the natural logarithm. We often use $3$-AP as shorthand for three-term
arithmetic progression.   

Let $\tower(a,k)$ denote a tower of $a$'s of height $k$, and $\tower(a,k,r)$ denote a tower of $a$'s of height $k$ and an $r$
on top. So $\tower(a,0)=1$ and $\tower(a,k+1)=a^{\tower(a,k)}$
for $k\geq 0$, and $\tower(a,0,r)=r$ and $\tower(a,k+1,r)=a^{\tower(a,k,r)}$
for $k\geq0$.


\section{Upper bound}

\label{sectionupperbound}

In \cite{FPI}, we have already proved that 
\[
n_{p}(\alpha,\alpha^3-\epsilon)\le\tower(p,\log((\alpha-\alpha^3)/\epsilon)+5,1/\epsilon).
\]
We next give an upper bound on $n_p(\alpha,\beta)$, which tightens the above bound when $\epsilon=\alpha^3-\beta$ is larger compared to $\alpha$. Recall that the exponent $C_p$ in the arithmetic removal lemma satisfies $C_p=\Theta(\log p)$. We assume $\beta \geq \alpha^{C_p}/2$ as otherwise we know $n_p(\alpha,\beta)=\Theta(\log(1/\alpha))$ by the discussion in the introduction. 

\begin{thm}
\label{thm:largebound} Let $\beta=\alpha^{3}-\epsilon$ and suppose that $\beta \geq \alpha^{C_p}/2$.
We have the following upper bounds. 
\begin{enumerate} 
\item 
\[
n_{p}(\alpha,\beta)\le\tower(p,\log((\alpha-\alpha^3)/\epsilon)+5,1/\epsilon).
\]

\item If $2^{-8-8C_{p}}\alpha^{3}<\beta \leq 2^{-8-8C_{p}}\alpha,$ then
\[
n_{p}(\alpha,\beta)\le\tower(p,\log (\beta/\epsilon)+O\left((\log p)\log \log_p(\alpha/\beta)\right),1/\epsilon),
\]
\item If $\beta\le 2^{-8-8C_{p}}\alpha^{3}$, then  
\[
n_{p}(\alpha,\beta)\le\tower\left(p,O\left((\log p)\log \frac{\log(\alpha/\beta)}{\log(\alpha^3/\beta)}\right),1/\beta \right).
\]
\end{enumerate}
\end{thm}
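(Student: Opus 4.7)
The plan is to build on the arithmetic regularity argument from \cite{FPI} that established bound~(1). That argument iterates the following step. Apply a quantitative arithmetic regularity lemma at parameter $\eta$, producing a subspace $H$ of codimension $\text{Tow}(p,O(1),1/\eta)$ on whose cosets $A$ has densities $\alpha_C$ of average $\alpha$; for a typical $d \in H\setminus\{0\}$, the counting lemma gives 3-AP density $\mathbb{E}_C[\alpha_C^3]\pm O(\eta)$, which by the power-mean inequality is at least $\alpha^3-O(\eta)$. Taking $\eta = \Theta(\epsilon)$, either this already produces density $\geq\beta$, or the $\alpha_C$ must be concentrated around $\alpha$ and a small extraction step finds a coset on which $A$ has boosted density $\alpha'\geq \alpha+\Omega(\epsilon^{1/2}/\alpha)$; iterating until $\alpha'$ saturates takes $\log((\alpha-\alpha^3)/\epsilon)+O(1)$ rounds, which is how the tower height for bound~(1) arises, with the $1/\epsilon$ on top coming from the single regularity call at tolerance $\eta$.

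For bound~(2), the extra slack is that $\beta\leq 2^{-8-8C_p}\alpha$: one may afford to boost the base density substantially before its cube overshoots $\beta$. The plan is to precondition by running $O(\log(\alpha/\beta))$ coarse density-increment steps, which in tower notation contribute an additive $O((\log p)\log\log_p(\alpha/\beta))$ to the height, passing to a subspace on a coset of which $A$ has local density $\alpha^\ast$ with $(\alpha^\ast)^3\geq \beta+\epsilon/2$. A single final regularity call at parameter $\eta = \Theta(\epsilon)$ then delivers the required popular $d$. Composing yields a tower with top $1/\epsilon$ and height $\log(\beta/\epsilon)+O((\log p)\log\log_p(\alpha/\beta))$.

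For bound~(3), where $\beta\leq 2^{-8-8C_p}\alpha^3$, the target lies so far below the random bound that fine regularity can be largely dispensed with. The base case is the polynomial-bound arithmetic removal lemma of \cite{FL}: every density-$\alpha$ set has total 3-AP density $\geq\alpha^{C_p}$, so as soon as $\alpha^{C_p}/2\geq\beta$ one finishes by averaging alone in dimension $\Theta(\log(1/\alpha))$, which contributes only near the top of the tower. Otherwise, I would iterate $O((\log p)\log\frac{\log(\alpha/\beta)}{\log(\alpha^3/\beta)})$ density-increment steps at coarse tolerance $\eta = \Theta(\beta)$, each costing a tower of $p$'s of constant height with $1/\beta$ on top, until the local density $\alpha^\ast$ on some coset satisfies $(\alpha^\ast)^{C_p}/2\geq\beta$ and the removal-lemma averaging concludes.

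The main obstacle will be the tight bookkeeping of tower heights: arranging that the preprocessing iterations and the final regularity step compose \emph{additively} inside the tower rather than multiplicatively, and selecting the per-step density increments precisely so that the stated iteration counts---including the $\log p$ factor in bound~(3)---are sharp and the additive $O_p(1)$ slack suffices to absorb the logarithms introduced by the intermediate regularity calls.
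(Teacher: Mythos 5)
The proposal correctly identifies the overall strategy (iterated density increment, arithmetic regularity, final averaging for very small $\beta$), but there is a genuine gap in the mechanism claimed to produce the tower heights for parts~(2) and~(3), and it is precisely the gap that the paper's main new lemma, Lemma~\ref{lem:mean cubed density increment_large}, is designed to fill.

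In your sketch for~(2) you say that $O(\log(\alpha/\beta))$ coarse density-increment steps ``contribute an additive $O((\log p)\log\log_p(\alpha/\beta))$ to the height,'' and in~(3) you claim the number of rounds is $O((\log p)\log\frac{\log(\alpha/\beta)}{\log(\alpha^3/\beta)})$ at tolerance $\Theta(\beta)$. But with any iteration whose single step gains a constant factor (or even geometric gain, as in Lemma~\ref{lem:mean cubed density increment_small}), each round of the density increment costs one level of the tower, so $O(\log(\alpha/\beta))$ rounds would cost tower height $O(\log(\alpha/\beta))$, \emph{not} $O((\log p)\log\log_p(\alpha/\beta))$. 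To achieve a doubly-logarithmic round count you need an increment that grows the quantity being tracked by a \emph{power} per round rather than by a constant factor. This is what the paper's Lemma~\ref{lem:mean cubed density increment_large} provides: once $b(H)\ge 2^{8+8C_p}\beta$, using the sharp arithmetic removal lemma of~\cite{FL} \emph{inside} the increment step (to show that if the $3$-AP density with nonzero common difference is below $\beta$ then the set of dense cosets under the weak regularity partition cannot be too large), one obtains a subspace $H'$ with $b(H')/\beta > (b(H)/\beta)^{1+\tau_p}$ where $\tau_p = 1/(2C_p)$. Since $b(H)/\beta$ is capped at $\alpha/\beta$, this yields $s_2 \le \log_{1+\tau_p}(\log(\alpha/\beta)/\log(b(H_{s_1})/\beta)) = O((\log p)\log\log_p(\alpha/\beta))$ rounds, each still a single tower level. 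Your plan invokes the removal lemma only as a terminal base case in~(3), not as an engine inside each increment round; that alone cannot produce the required power-type gain, and ``selecting the per-step density increments precisely'' (as you put it in your final paragraph) is exactly the missing content, not mere bookkeeping. Your description of~(1) also mischaracterizes the increment: the argument increments the mean cube density $b(H)$ (its distance from $\alpha^3$ doubles per round, hence $\log((\alpha-\alpha^3)/\epsilon)+O(1)$ rounds), not the density of $A$ on a single coset, and an additive boost of $\Omega(\epsilon^{1/2}/\alpha)$ per round as you wrote would give polynomially many rounds, not logarithmically many.
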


We first observe how Theorem \ref{thm:largebound} implies the claimed upper bound in Theorem \ref{main2}. 

If $\beta> 2^{-8-8C_p}\alpha$, as $\beta < \alpha^3$, we have $\alpha > 2^{-4-4C_p}$. So the bound in Theorem \ref{thm:largebound}(1) demonstrates that in this case 
$n_p(\alpha,\beta)$ is at most a tower of $p$'s of height at most $$\log((\alpha-\alpha^3)/\epsilon)+5 \leq \log(\alpha^3/\epsilon)+2\log(1/\alpha)+5 \leq \log(\alpha^3/\epsilon)+O(\log p).$$
This gives the desired bound when $\epsilon <\alpha^3/(\log(1/\alpha))^{\log p}$. When $\epsilon \ge \alpha^3/(\log(1/\alpha))^{\log p}$, since $\alpha$ is bounded below by a constant depending only on $p$, so is $\epsilon$. Hence, $\log(\alpha^3/\epsilon) \le \log(1/\epsilon) = O_p(1)$ and we obtain the desired upper bound in the remaining regions. 

So we may assume $\beta \leq 2^{-8-8C_p}\alpha$. If $\epsilon<\alpha^3/(\log 1/\alpha)^{\log p}$, as $\beta<\alpha^3$, the first term in the sum in the tower height in the bound in Theorem \ref{thm:largebound}(2) is the largest of the two terms (up to an absolute constant factor), and we get $n_p(\alpha,\beta)$ in this case is at most a tower of $p$'s of height $O(\log (\alpha^3/\epsilon))$. If $\alpha^3/(\log (1/\alpha))^{\log p}\le \epsilon < \alpha^3(1-2^{-8-8C_p})$, the second term is larger (up to a multiplicative constant and an additive term depending on $p$), and as $2^{-8-8C_p}\alpha^3<\beta<\alpha^3$, this is $O((\log p)\log \log (1/\alpha)) \pm O_p(1)$. Otherwise, we have $\epsilon \geq \alpha^3(1-2^{-8-8C_p})$ and we can apply Theorem \ref{thm:largebound}(3) to get an upper bound on $n_p(\alpha,\beta)$ which is a tower of $p$'s of height $O\left((\log p)\log \frac{\log(\alpha/\beta)}{\log(\alpha^3/\beta)}\right)$ with a $1/\beta$ on top. In any case, we get the desired upper bounds in Theorem \ref{main2}.




A $3$-AP with common difference $d$ is an ordered triple $(a,b,c)$
such that $c-b=b-a=d$. A $3$-AP is {\it trivial} if the common difference $d$ is zero, i.e., it contains
the same element three times. Otherwise, we call the $3$-AP {\it nontrivial}. 

Let $G=\mathbb{F}_{p}^{n}$. We will more generally prove the upper bounds in Theorem \ref{thm:largebound} for weighted set, given by a function $f:\mathbb{F}_p^n\rightarrow [0,1]$. For each affine subspace $H$ of $\mathbb{F}_{p}^{n}$,
let $\alpha(H)=\mathbb{E}_{x\in H}[f(x)]$ be the density of $f$
in $H$. Then $\alpha(G)=\mathbb{E}_{x\in G}[f(x)]$ is the
density of $f$. For a subspace $H$, the \textit{mean cube density} $b(H)$ is defined
to be the average of the cube of the density of $f$ in the affine
translates of $H$ which partition $\mathbb{F}_{p}^{n}$. It is also
given by $b(H)=\mathbb{\mathbb{E}}_{y\in G}[\alpha(H+y)^{3}]$, where
$H+y=\{h+y:h\in H\}$ is the affine translate of $H$ by $y$.

We define the density of $3$-APs with common difference $d$ of a weighted set $f:\mathbb{F}_{p}^{n}\to [0,1]$ as $\mathbb{E}_{x\in\mathbb{F}_p^n}[f(x)f(x+d)f(x+2d)]=\frac{1}{p^n}{\sum_{x\in\mathbb{F}_p^n}[f(x)f(x+d)f(x+2d)]}$. The density of $3$-APs with common difference $d$
of a set $A$ is the same as that of the characteristic function
of $A$. For a function $f:\mathbb{F}_{p}^{n}\rightarrow[0,1]$,
the $3$-AP density of $f$, which is $\mathbb{E}_{x,d\in\mathbb{F}_p^n}[f(x)f(x+d)f(x+2d)]$, we denote by $\Lambda(f)$. For an affine
subspace $H$, we let $\Lambda_H(f)$ denote the density of three-term
arithmetic progressions of $f$ in $H$. That is, 
\[
\Lambda_H(f)=\mathbb{E}_{x,y,z\in H,~x-2y+z=0~}[f(x)f(y)f(z)].
\]
We let $\lambda_H(f)$ denote the density of \emph{nontrivial} three-term
arithmetic progressions of $f$ in $H$. That is, 
\[
\lambda_H(f)=\mathbb{E}_{x,y,z\in H~\textrm{distinct},~x-2y+z=0~}[f(x)f(y)f(z)].
\]
As observed in \cite{FPI}, $\lambda_H(f)$ and $\Lambda_H(f)$ are close if $H$ is
large, 
\begin{equation}
\lambda_H(f)=\frac{\Lambda_H(f)\cdot|H|^{2}-|H|\cdot\mathbb{E}_{x\in H}\left[f(x)^{3}\right]}{|H|(|H|-1)}\ge\Lambda_H(f)-\frac{\mathbb{E}_{x\in H}\left[f(x)^{3}\right]}{|H|}.\label{closeLl}
\end{equation} By averaging the previous inequality over
all translates of $H$ and letting $\mathbb{E}_{x\in G}[f(x)]=\alpha$, we have 
\begin{equation}
\mathbb{E}_{y}[\lambda_{H+y}(f)]\ge\mathbb{E}_{y}[\Lambda_{H+y}(f)]-\frac{\mathbb{E}_{y\in G}\left[\mathbb{E}_{x\in H+y}\left[f(x)^{3}\right]\right]}{|H|}\ge\mathbb{E}_{y}[\Lambda_{H+y}(f)]-\frac{\alpha}{|H|}.\label{closeLlL}
\end{equation}

The proof of Theorem \ref{thm:largebound} is by a density increment
argument using the mean cube density. 

In \cite{FPI}, we proved the following lemma, which shows that if the density of $3$-APs with nonzero common
difference in a subspace $H$ is small, then the mean cube density
can be increased substantially by passing to a subspace $H'$ of bounded
codimension. 
\begin{lem}
\label{lem:mean cubed density increment_small}
If $f:\mathbb{F}_{p}^{n}\rightarrow[0,1]$ has density $\alpha$, $H$ is a subspace of $\mathbb{F}_{p}^{n}$ of size larger than $4\alpha/\epsilon$, and $\mathbb{E}_{y}[\lambda_{H+y}(f)] < \alpha^{3}-\epsilon$, then there is a subspace
$H'$ of $H$ with $\text{Codim}(H')\le\text{Codim}(H)+p^{\text{Codim}(H)}\cdot144/\epsilon^{2}$
such that $b(H')-\alpha^{3} > 2(b(H)-\alpha^{3})+\epsilon/2$.\end{lem}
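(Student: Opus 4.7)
The strategy is a Fourier-analytic density increment on the cosets of $H$. The hypothesis says the observed $3$-AP density with common difference in $H\setminus\{0\}$ falls short of $\alpha^3$ by $\epsilon$; via Fourier duality on $H$ this forces large nontrivial Fourier mass in many cosets of $H$, and collapsing all the offending characters into a single refined subspace $H'\subseteq H$ will produce the mean-cube-density jump.

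Concretely, I would first use \eqref{closeLlL} together with $|H|>4\alpha/\epsilon$ to pass from $\lambda$ to $\Lambda$, obtaining
\[
\mathbb{E}_g[\Lambda(H+g)] < \alpha^3 - 3\epsilon/4,
\]
where $g$ ranges over coset representatives of $H$ in $G=\mathbb{F}_p^n$. Writing $f_g(x)=f(x+g)$ for $x\in H$ with mean $\alpha_g$, the standard three-term Fourier identity on $H$ — valid since $p\neq 2$, so $\xi\mapsto -2\xi$ is a bijection on $\hat H$ — gives $\Lambda(H+g)=\alpha_g^3+\sum_{\xi\neq 0}\hat f_g(\xi)^2\hat f_g(-2\xi)$, and averaging using $\mathbb{E}_g[\alpha_g^3]=b(H)$ yields
\[
\mathbb{E}_g\Bigl|\sum_{\xi\neq 0}\hat f_g(\xi)^2\hat f_g(-2\xi)\Bigr| \;>\; (b(H)-\alpha^3) + 3\epsilon/4.
\]

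Next I would threshold each coset's spectrum at $\eta=c\epsilon^2$ for a small absolute constant $c$, setting $S_g=\{\xi\neq 0:|\hat f_g(\xi)|^2\geq \eta\}$ and $\sigma_g=\sum_{\xi\in S_g}|\hat f_g(\xi)|^2$. Parseval gives $|S_g|\leq \alpha_g/\eta\leq 1/\eta$. A Cauchy--Schwarz/Parseval split of the Fourier sum bounds the small-coefficient tail by $\sqrt{\eta}\,\alpha_g$ and the head by $\alpha_g\sigma_g$, so choosing $c$ so that $\sqrt{\eta}\,\alpha\leq \epsilon/4$ forces
\[
\mathbb{E}_g[\alpha_g\sigma_g] \;>\; (b(H)-\alpha^3) + \epsilon/2.
\]
Set $S=\bigcup_g S_g$ and $H'=H\cap\bigcap_{\xi\in S}\ker\xi$; then $|S|\leq |G/H|/\eta = p^{\text{Codim}(H)}/\eta$, which for an appropriate choice of $c$ gives the codimension bound $\text{Codim}(H')\leq \text{Codim}(H)+144\cdot p^{\text{Codim}(H)}/\epsilon^2$.

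Finally, to verify the density jump, note that for each $g$ the coset-density function $F_g(y)=\alpha(H'+g+y)$ on $H/H'$ has Fourier coefficients equal to $\hat f_g(\xi)$ for exactly those $\xi$ trivial on $H'$, which by construction includes $S_g$. Parseval on $H/H'$ then gives $\mathbb{E}_y[F_g^2]\geq \alpha_g^2+\sigma_g$; combining the power-mean inequality $\mathbb{E}[X^3]\geq(\mathbb{E}[X^2])^{3/2}$ for $X\geq 0$ with the elementary bound $(a^2+b)^{3/2}\geq a^3+\tfrac{3}{2}ab$ yields $\mathbb{E}_y[F_g^3]\geq \alpha_g^3+\tfrac{3}{2}\alpha_g\sigma_g$. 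Averaging over $g$ and inserting the lower bound on $\mathbb{E}_g[\alpha_g\sigma_g]$ gives $b(H')>b(H)+\tfrac{3}{2}(b(H)-\alpha^3)+\tfrac{3}{4}\epsilon$, which (using $b(H)\geq\alpha^3$) exceeds $\alpha^3+2(b(H)-\alpha^3)+\epsilon/2$, as required. The main delicacy is the balancing act in choosing $\eta$: it must be small enough ($\lesssim \epsilon^2$) that the Fourier tail cannot disguise the observed $3$-AP deficit, yet large enough to keep $|S_g|$ at $O(1/\epsilon^2)$; and the factor $p^{\text{Codim}(H)}$ in the codimension bound is intrinsic, arising from the union bound over spectra across all cosets of $H$, which is what dictates how many times this lemma may be iterated before the codimension becomes prohibitive.
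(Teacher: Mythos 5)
Your proof is correct. Let me flag that the paper does not reprove this lemma here --- it is imported from the companion paper [FPI], where (as one can infer from the proof of Lemma~\ref{lem:mean cubed density increment_large} and from the proof of Lemma~\ref{bettermeancube}, which the authors say ``begins along the lines of'' the proof of this lemma) the argument runs through the weak regularity lemma (Lemma~\ref{lem:(Weak-regularity-lemma.)}) applied inside each coset of $H$, the counting lemma (Lemma~\ref{lem:Counting lemma}), and --- per the acknowledgments --- Schur's inequality to convert the $3$-AP deficit into a mean-cube jump. You have instead unpacked the first two of those into their underlying Fourier mechanics: your thresholding of the spectrum at $\eta\asymp\epsilon^2$ and the resulting bound $|S_g|\le 1/\eta$ via Parseval \emph{is} the proof of the weak regularity lemma, and your identity $\Lambda(H+g)=\alpha_g^3+\sum_{\xi\neq 0}\widehat{f_g}(\xi)^2\widehat{f_g}(-2\xi)$ with the head/tail split \emph{is} the proof of the counting lemma. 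Where you genuinely diverge is the final step: rather than relating the $3$-AP count $\mathbb{E}[\alpha_{ja}\alpha_{jb}\alpha_{jc}]$ to $\mathbb{E}[\alpha_{ja}^3]$ via Schur, you pass through $L^2$: Parseval on $H/H'$ gives $\mathbb{E}_y[F_g^2]\ge\alpha_g^2+\sigma_g$, and the power-mean inequality $\mathbb{E}[X^3]\ge(\mathbb{E}[X^2])^{3/2}$ together with the tangent-line bound $(u+v)^{3/2}\ge u^{3/2}+\tfrac32 u^{1/2}v$ converts this into $\mathbb{E}_y[F_g^3]\ge\alpha_g^3+\tfrac32\alpha_g\sigma_g$. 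This is a clean and slightly more elementary route; the power-mean step loses a factor of two relative to the sharp Taylor expansion, but that slack is absorbed by the margin between the $\tfrac52(b(H)-\alpha^3)+\tfrac34\epsilon$ you obtain and the $2(b(H)-\alpha^3)+\tfrac12\epsilon$ required (using $b(H)\ge\alpha^3$). All your constant-chasing checks out: with $\eta=\epsilon^2/16$ one gets $|S|\le 16\,p^{\operatorname{Codim}(H)}/\epsilon^2\le 144\,p^{\operatorname{Codim}(H)}/\epsilon^2$, comfortably inside the stated codimension budget.
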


Lemma \ref{lem:mean cubed density increment_large} below has a similar assumption and conclusion as the previous lemma. However, it assumes both a stronger hypothesis on the $3$-AP density, and has a stronger conclusion, that
the ratio of the mean cube density to the bound on the $3$-AP density
increases by a factor in the exponent. As before, the proof
uses the weak regularity lemma and counting lemma, but it also uses
the tight bound from \cite{FL} on the arithmetic removal lemma to
get a larger density increment. For convenience, we restate the statements of the weak regularity lemma and the counting lemma here. 

For $G=\mathbb{F}_p^n$ and $f:G \rightarrow \mathbb{C}$, define the Fourier transform $\widehat{f}(\chi)=\frac{1}{|G|}\sum_{x\in G}f(x)\chi(x)$ for characters $\chi \in \widehat{G}$. For a subspace $H$ of $G$, define the average function $f_H(x)=\mathbb{E}_{y\in H+x}[f(y)]$, which is constant on each affine translate of $H$ and has value equal to the average value of $f$ over that affine translate. A subspace $H$ is defined to be {\it $\delta$-weakly-regular} with respect to $f$ if $|\widehat{f}(\chi)-\widehat{f_H}(\chi)|\le \delta$ for all $\chi\in \widehat{G}$. Also, two functions $f,g:G\to [0,1]$ are called {\it $\delta$-close} if $|\widehat{f}(\chi)-\widehat{g}(\chi)|\le \delta$ for all $\chi\in \widehat{G}$. 
\begin{lem}
\label{lem:(Weak-regularity-lemma.)}(Weak regularity lemma.) For
any function $f:\mathbb{F}_{p}^{n}\rightarrow[0,1]$, there is a subspace
$H$ which is $\delta$-weakly-regular with respect to $f$ such that
$H$ has codimension at most $\delta^{-2}$. \end{lem}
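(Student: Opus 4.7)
The plan is to prove this classical weak regularity lemma by an energy increment argument. Define the \emph{energy} of $f$ relative to a subspace $H$ by $E(H) := \|f_H\|_2^2 = \mathbb{E}_{x\in G}[f_H(x)^2]$, where $G = \mathbb{F}_p^n$, and iteratively refine a nested sequence $H_0 \supseteq H_1 \supseteq \cdots$ starting from $H_0 = G$: given $H_i$, stop and output it if it is $\delta$-weakly-regular; otherwise pick a witness character $\chi_i \in \widehat{G}$ with $|\widehat{f}(\chi_i) - \widehat{f_{H_i}}(\chi_i)| > \delta$ and set $H_{i+1} = H_i \cap \ker(\chi_i)$, so that $\text{Codim}(H_{i+1}) \le \text{Codim}(H_i) + 1$.

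The key Fourier identity driving the argument is $\widehat{f_H}(\chi) = \widehat{f}(\chi) \cdot \mathbf{1}[\chi \in H^\perp]$, where $H^\perp \subseteq \widehat{G}$ denotes the subgroup of characters trivial on $H$; this follows by unwinding $f_H(x) = \mathbb{E}_{y\in H+x}[f(y)]$ and using that $\frac{1}{|H|}\sum_{h\in H}\chi(h)$ equals $1$ or $0$ according to whether $\chi$ annihilates $H$. Consequently the witness $\chi_i$ must lie outside $H_i^\perp$ (otherwise the two Fourier coefficients coincide and the witness provides irregularity $0$), so that in fact $\widehat{f_{H_i}}(\chi_i) = 0$ and $|\widehat{f}(\chi_i)| > \delta$.

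By Parseval, $E(H) = \sum_{\chi \in H^\perp} |\widehat{f}(\chi)|^2$, and since $H_{i+1}^\perp$ contains all of $H_i^\perp$ together with the new character $\chi_i \notin H_i^\perp$, we obtain $E(H_{i+1}) \ge E(H_i) + |\widehat{f}(\chi_i)|^2 > E(H_i) + \delta^2$. On the other hand $0 \le f \le 1$ forces $E(H) \le \|f\|_2^2 \le 1$ for every $H$, so the iteration must halt after at most $\lfloor \delta^{-2}\rfloor$ refinement steps, producing a $\delta$-weakly-regular subspace of codimension at most $\lfloor \delta^{-2}\rfloor$. The argument is essentially routine once the identity $\widehat{f_H}(\chi) = \widehat{f}(\chi)\mathbf{1}[\chi \in H^\perp]$ is in hand; the one conceptual point is the book-keeping observation that each irregular character injects a fresh block of Fourier mass exceeding $\delta^2$ into the energy, while the total energy is capped by $1$.
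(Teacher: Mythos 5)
Your proof is correct and is the canonical energy-increment argument for the weak regularity lemma. The paper itself does not supply a proof of this lemma --- it is restated ``for convenience'' and implicitly inherited from Green's arithmetic regularity framework and the first paper in this series --- so there is no internal proof to compare against; but your argument is exactly the standard one: the key identity $\widehat{f_H}(\chi)=\widehat f(\chi)\mathbf 1[\chi\in H^\perp]$, the observation that a witness character to irregularity must lie outside $H^\perp$ and thus carry Fourier mass exceeding $\delta$, the Parseval formula $\|f_H\|_2^2=\sum_{\chi\in H^\perp}|\widehat f(\chi)|^2$, and the bound $\|f_H\|_2^2\le 1$ forcing termination after fewer than $\delta^{-2}$ refinement steps. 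One small book-keeping remark: your argument directly yields codimension \emph{at most} $\lfloor\delta^{-2}\rfloor$, whereas the lemma is phrased with codimension exactly $\lfloor\delta^{-2}\rfloor$; this is harmless, since weak regularity is inherited by further subspaces (any $\chi\notin H'^\perp$ also lies outside $H^\perp$, so $|\widehat f(\chi)|\le\delta$ by regularity of $H$), and one can pad by intersecting with additional hyperplanes.
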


\begin{lem}\label{lem:Counting lemma}(Counting lemma.) Suppose $f,g:\mathbb{F}_{p}^{n}\rightarrow[0,1]$
are $\delta$-close with density $\alpha$, then
$|\Lambda(f)-\Lambda(g)|\leq3\delta\alpha$.\end{lem}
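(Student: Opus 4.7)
The plan is the standard telescoping-plus-Fourier argument. First I would extend the $3$-AP form to a trilinear version
\[
\Lambda(h_1, h_2, h_3) := \mathbb{E}_{x, d}[h_1(x) h_2(x+d) h_3(x+2d)],
\]
which recovers $\Lambda(f) = \Lambda(f,f,f)$ on the diagonal. Telescoping $f\cdot f\cdot f - g\cdot g\cdot g$ one factor at a time yields
\[
\Lambda(f) - \Lambda(g) = \Lambda(f-g, f, f) + \Lambda(g, f-g, f) + \Lambda(g, g, f-g),
\]
so it suffices to bound each of the three trilinear terms by $\delta\alpha$.

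Next I would expand each $h_i$ in the Fourier basis of $G = \mathbb{F}_p^n$. Averaging over $d$ forces a linear constraint among the three characters, and since $p$ is odd (so $\chi\mapsto\chi^{-2}$ is a bijection on $\widehat{G}$) the usual computation yields
\[
\Lambda(h_1, h_2, h_3) = \sum_{\chi \in \widehat{G}} \widehat{h_1}(\chi)\, \widehat{h_2}(\chi^{-2})\, \widehat{h_3}(\chi).
\]
Pulling out the largest Fourier coefficient of whichever factor is $f-g$ and applying Cauchy--Schwarz followed by Parseval to the remaining two sums gives
\[
|\Lambda(h_1, h_2, h_3)| \le \max_{\chi} |\widehat{h_1}(\chi)| \cdot \sqrt{\mathbb{E}_x[h_2(x)^2]}\cdot \sqrt{\mathbb{E}_x[h_3(x)^2]},
\]
with symmetric versions placing the max on $h_2$ or $h_3$ instead.

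To conclude, since $f,g:\mathbb{F}_p^n\to[0,1]$ both have density $\alpha$, we have $\mathbb{E}_x[f(x)^2]\le \mathbb{E}_x[f(x)]=\alpha$ and likewise $\mathbb{E}_x[g(x)^2]\le \alpha$, while $\delta$-closeness of $f$ and $g$ gives $\max_\chi|\widehat{f-g}(\chi)|\le \delta$. Placing the maximum on the $(f-g)$-factor in each of the three pieces of the telescope bounds that piece by $\delta \cdot \sqrt{\alpha}\cdot\sqrt{\alpha} = \delta\alpha$, and the triangle inequality yields $|\Lambda(f)-\Lambda(g)|\le 3\delta\alpha$. There is no real obstacle: the only step needing care is verifying the Fourier identity for $\Lambda$ and confirming that in each of the three pieces the $(f-g)$-factor genuinely receives the Fourier-maximum bound, which it does by the symmetry of the trilinear estimate.
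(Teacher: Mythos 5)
Your proof is correct. The paper does not give a proof of the counting lemma in this part (it cites its statement from Part I of the series), but the telescoping-plus-Fourier argument you give — expand $\Lambda$ in Fourier, telescope $fff - ggg$, pull out $\|\widehat{f-g}\|_\infty \le \delta$, apply Cauchy--Schwarz and Parseval together with $\mathbb{E}[f^2] \le \mathbb{E}[f] = \alpha$ — is exactly the standard proof used there, and yields the stated bound $3\delta\alpha$.
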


As remarked in \cite{FPI}, while stated only for functions on $\mathbb{F}_p^n$, the weak regularity lemma and counting lemma can also be applied to affine subspaces of $\mathbb{F}_p^n$. 

The following density increment lemma assumes that the mean cube density is significantly larger than $\beta$, and concludes that, in passing to a large subspace, $b(H)/\beta$ increases by a power. 

\begin{lem}
\label{lem:mean cubed density increment_large} Let $f:\mathbb{F}_p^n \rightarrow [0,1]$, and 
$H$ be a subspace
of $\mathbb{F}_{p}^{n}$ with $|H| \ge 2\alpha/\beta$ and 
$b(H)\ge2^{8+8C_{p}}\beta$, where $C_{p}=\Theta(\log p)$ is the exponential constant in the
arithmetic removal lemma. If $\mathbb{E}_{y}[\lambda_{H+y}(f)] < \beta$, then there is a subspace
$H'$ of $H$ with $\text{Codim}(H')\le\text{Codim}(H)+p^{\text{Codim}(H)}\cdot36/\beta^{2}$
and $b(H')/\beta > (b(H)/\beta)^{1+\tau_{p}}$, where $\tau_p=1/(2C_p)=\Theta((\log p)^{-1})>0$.\end{lem}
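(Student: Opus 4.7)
The plan is to follow the high-level outline of Lemma \ref{lem:mean cubed density increment_small}, but to replace the step that used a trivial (convexity) lower bound on the within-coset 3-AP density by an application of the tight arithmetic removal lemma from \cite{FL}. This substitution promotes the additive-type gain obtained in the smaller lemma into the power-type gain claimed here.

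To build $H'$, apply the weak regularity lemma (Lemma \ref{lem:(Weak-regularity-lemma.)}) coset-by-coset: for each $g\in G/H$, produce a subspace $H_{g}\le H$ with $\text{Codim}_{H}(H_{g})\le 36/\beta^{2}$ so that $f|_{H+g}$ is $\delta$-weakly regular with respect to $H_{g}$, where $\delta:=\beta/6$. Set $H':=\bigcap_{g} H_{g}$; this immediately yields the codimension bound $\text{Codim}(H')\le\text{Codim}(H)+p^{\text{Codim}(H)}\cdot 36/\beta^{2}$. Next, apply the counting lemma (Lemma \ref{lem:Counting lemma}) within each coset to obtain $|\Lambda(f|_{H+g})-\Lambda(f_{H'}|_{H+g})|\le 3\delta\,\alpha(H+g)$; averaging over $g$ and combining with (\ref{closeLlL}), the hypothesis $\mathbb{E}_{g}[\lambda(H+g)]<\beta$, and the assumption $|H|\ge 2\alpha/\beta$ (which gives $\alpha/|H|\le\beta/2$), yields
\[
\mathbb{E}_{g}\bigl[\Lambda(f_{H'}|_{H+g})\bigr]<\beta+\beta/2+3\delta\alpha\le 2\beta.
\]

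Now interpret the left-hand side via the tight removal lemma. Since $f_{H'}$ is constant on $H'$-cosets, $f_{H'}|_{H+g}$ corresponds to a function $\gamma^{(g)}\colon H/H'\to[0,1]$ with mean $c_{g}:=\alpha(H+g)$ and $\Lambda(f_{H'}|_{H+g})=\Lambda(\gamma^{(g)})$. The tight bound on the arithmetic removal lemma from \cite{FL}, applied on the vector space $H/H'$, gives $\Lambda(\gamma^{(g)})\ge c_{g}^{C_{p}}$. Averaging produces the key $C_{p}$th-moment constraint
\[
\mathbb{E}_{g}[c_{g}^{C_{p}}]\le 2\beta.
\]

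The final and most delicate step is to combine this moment constraint with the hypothesis $b(H)=\mathbb{E}_{g}[c_{g}^{3}]\ge 2^{8+8C_{p}}\beta$ to deduce the exponential increment $b(H')/\beta>(b(H)/\beta)^{1+\tau_{p}}$. The plan is to interpolate via Hölder between the $3$rd and $C_{p}$th moments of $c_{g}$: the lower bound on $b(H)$ forces a constant proportion of the cubic mass to concentrate on a set of \emph{heavy} cosets where $c_{g}$ exceeds a threshold of order $(b(H)/\beta)^{1/(C_{p}-3)}$. On each such heavy coset, $\mathbb{E}_{i}[(\gamma_{i}^{(g)})^{3}]$ strictly exceeds $c_{g}^{3}$, and the defect (the within-coset cubic variance term) can be quantified by pairing a second application of the tight removal lemma on $H/H'$ with the moment constraint, producing an excess of order $c_{g}^{3}\cdot(b(H)/\beta)^{\tau_{p}}$; averaging over $g$ yields $b(H')\ge\beta\,(b(H)/\beta)^{1+\tau_{p}}$, with $\tau_{p}=1/(2C_{p})$ arising from the exponents in the Hölder interpolation. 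The hard part is tracking the constants cleanly: any slack would degrade $\tau_{p}$ and propagate into a weaker tower-height bound in Theorem \ref{thm:largebound}, so the factor of $2$ in $\tau_{p}^{-1}=2C_{p}$ has to be extracted from a sharp form of the interpolation rather than a crude estimate.
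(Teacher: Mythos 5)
Your proposal gets the high-level shape right (weak regularity coset-by-coset with parameter $\delta=\beta/6$, intersect to form $H'$, counting lemma to bound the $3$-AP density of the averaged function by $2\beta$), but it misses the paper's central device and leaves the crux unproved.

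The first concrete problem is the application of the removal lemma. You assert $\Lambda(\gamma^{(g)})\ge c_g^{C_p}$ for the $[0,1]$-valued coset-average function $\gamma^{(g)}$. The supersaturation form of the removal lemma in \cite{FL}, as stated and used in this paper, is for \emph{sets}, not for weighted functions. The paper sidesteps this by introducing a threshold: inside each $H_j$, it defines $X_j$ to be the set of $T_j$-cosets whose density exceeds $\alpha(H_j)/y^{1/6}$ (with $y=b(H)/\beta$), applies the removal lemma to the \emph{set} $X_j\subset H/T_j$ to get $\Lambda(\mathbf{1}_{X_j})\ge x_j^{C_p}$, and then multiplies by the threshold cubed to lower-bound $\Lambda(t_j)$. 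Your proposal never introduces a threshold, so there is nothing to apply the set removal lemma to. (Also, the counting lemma has to be applied to the function averaged over the weakly-regular subspace $H_g$ — the $t_j$ in the paper's notation — not over $H'\subset H_g$; regularity of $H_g$ does not transfer to the finer $H'$. The paper later passes to $H'$ only through convexity of $x\mapsto x^3$.)

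The more serious gap is the final step. A moment constraint of the form $\mathbb{E}_g[c_g^{C_p}]\le 2\beta$ constrains only the \emph{coset densities} $c_g$; it contains no information about the variation of $f$ \emph{within} a coset, which is exactly what must increase to make $b(H')$ exceed $b(H)$. The paper's threshold plays a double role precisely here: the removal lemma controls the \emph{fraction} $x_j$ of heavy subcosets in terms of $\Lambda(t_j)$, and Karamata's (majorization) inequality then converts a small $x_j$, together with the fixed threshold $\alpha(H_j)/y^{1/6}$, into a quantitative lower bound on $\mathbb{E}_k[t_j(T_{j,k})^3]$ — this is inequality (\ref{12321}) in the paper, not any Hölder interpolation. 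The parameter $a=2^{2/C_p}y^{-1/(2C_p)}$ is then optimized to balance the two terms $y^{1/2}$ and $2a^{-C_p}$ in (\ref{ijk}), and this is where $\tau_p=1/(2C_p)$ comes from. Your proposal replaces all of this with ``interpolate via Hölder'' and ``a second application of the tight removal lemma,'' but neither step is spelled out, and I do not see how they would produce the required lower bound on $b(H')$; in particular, the Hölder comparison of moments of $c_g$ alone cannot say anything about the within-coset mean-cube increment.

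In short: you are missing the thresholding/Karamata mechanism that is the heart of the paper's proof, you apply the removal lemma to an object (a $[0,1]$-valued function) to which the stated form does not directly apply, and the final increment step is not a proof.
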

\begin{proof}
Let $y=b(H)/\beta\ge 2^{8+8C_p}>2^8$ and $\eta=\beta/6$. Denote the affine translates
of $H$ by $H_{j}$, where $j\in\mathbb{F}_p^n/H$, so each affine translate of $H$ is labeled by the corresponding element in $\mathbb{F}_p^n/H$. For each
affine translate $H_{j}$ of $H$, we apply Lemma \ref{lem:(Weak-regularity-lemma.)} and Lemma \ref{lem:Counting lemma}, as remarked to apply on affine subspaces, to the translate $H_{j}$ of $H$ and the restriction of $f$ to $H_j$.  We obtain
a subspace $T_{j}$ of $H$ with
$\dim(H/T_j)\le \eta^{-2}$ such that 
\begin{equation}
|\Lambda_{H_j}(f)-\Lambda_{H_j}(t_j)|\le 3\alpha(H_j)\eta\le 3\eta \label{counting approx},
\end{equation} where $t_{j}:H_{j}\to[0,1]$ is defined by $t_j(x)=\mathbb{E}_{t\in T_j+x}[f(t)]$ for $x\in H_j$. We then let $$H'=\bigcap_{j\in \mathbb{F}_p^n/H}T_j.$$

We next prove that $$b(H')/\beta > (b(H)/\beta)^{1+\tau_p}.$$

Denote the affine translates
of $T_{j}$ in $H_{j}$ by $T_{jk}$ for $k\in H/T_j$. Since $t_j$ is constant on each translate of $T_j$ in $H_j$, we can denote by $t_j(T_{jk})$ the constant value $t_j(x)$ for $x\in T_{jk}$. Let $X_{j}$ be the set of $k\in H/T_j$ with $t_{j}(T_{jk})>\frac{\alpha(H_{j})}{y^{1/6}}$
and let $x_{j}=\frac{|X_{j}|}{|H/T_j|}$. By the arithmetic removal
lemma as discussed in the introduction, there is at least a $x_{j}^{C_{p}}$
fraction of the ordered triples $(k_1,k_2,k_3)$ that form a $3$-AP in $H/T_j$ with $k_1,k_2,k_3\in X_j$. Each $3$-AP $(k_1,k_2,k_3)$ in $H/T_j$ with $k_1,k_2,k_3\in X_j$ corresponds to three affine translates of $T_j$ in $H_j$ that form a $3$-AP of subspaces, where the value of $t_j$ on each of them is more than $\frac{\alpha(H_{j})}{y^{1/6}}$. Hence, 
\[
\Lambda_{H_j}(t_{j})\ge x_{j}^{C_{p}}\frac{\alpha(H_{j})^{3}}{y^{1/2}}.
\]
Moreover, the mean cube density over $H_{j}$ of $t_j$ is 
\begin{align}
\mathbb{E}_{k}[t_{j}(T_{jk})^{3}] & \ge (1-x_{j})\left(\frac{\alpha(H_{j})}{y^{1/6}}\right)^{3}+x_{j}\left(\alpha(H_{j})\cdot\frac{1-(1-x_{j})/y^{1/6}}{x_{j}}\right)^{3}\nonumber \\
 & =\alpha(H_{j})^{3}\left(\frac{1-x_{j}}{y^{1/2}}+\frac{(y^{1/6}-1+x_{j})^{3}}{x_{j}^{2}y^{1/2}}\right)\nonumber \\
 & =\frac{\alpha(H_{j})^{3}}{y^{1/2}}\left(1-x_{j}+\frac{(y^{1/6}-1+x_{j})^{3}}{x_{j}^{2}}\right),\label{12321}
\end{align}
where the first inequality is by Karamata's inequality (a generalization of Jensen's inequality, also known as the majorization inequality) applied to the convex function $h(x)=x^3$, and a $1-x_j$ fraction of the translates of $T_j$ have density at most $\frac{\alpha(H_{j})}{y^{1/6}}<\alpha(H_j)$. 

Since $\mathbb{E}_{j}[\lambda_{H_j}(f)] < \beta$, from (\ref{closeLlL}) and (\ref{counting approx}),
we have 
\begin{equation}
\mathbb{E}_{j}\left[x_{j}^{C_{p}}\frac{\alpha(H_{j})^{3}}{y^{1/2}}\right]\le\mathbb{E}_{j}[\Lambda_{H_j}(t_{j})]\le\mathbb{E}_{j}[\Lambda_{H_j}(f)]+3\eta\le\mathbb{E}_{j}[\lambda_{H_j}(f)]+\frac{\alpha}{|H|}+3\eta<2\beta,\label{x1y1z1}
\end{equation}
where in the last inequality we use the condition $|H|>2\alpha/\beta$
and $\eta=\beta/6$.

Let $a\in[0,1]$ be a constant to be defined later, $A$ be the set of $j$
such that $x_{j}>a$, and $I(j\in A)$ be the indicator function which
is $1$ if $j\in A$ and $0$ otherwise. From (\ref{x1y1z1}), we
have 
\[
\mathbb{E}_{j}\left[I(j\in A)\frac{\alpha(H_{j})^{3}}{y^{1/2}}\right] < \frac{2\beta}{a^{C_{p}}},
\]
and 
\[
\mathbb{E}\left[\frac{\alpha(H_{j})^{3}}{y^{1/2}}\right]=\frac{b(H)}{y^{1/2}}=\frac{y\beta}{y^{1/2}}=y^{1/2}\beta,
\]
so 
\begin{equation}
\mathbb{E}_{j}\left[I(j\notin A)\frac{\alpha(H_{j})^{3}}{y^{1/2}}\right] > y^{1/2}\beta-2a^{-C_{p}}\beta.\label{ijk}
\end{equation}
Observe that $f(x)=1-x+\frac{(z+x)^{3}}{x^{2}}=1+\frac{z^{3}}{x^{2}}+3\frac{z^{2}}{x}+3z$ is a decreasing function
in $x$ for $z>0$ and $x\in[0,1]$. Hence, when $x_j \le a$ and $z=y^{1/6}-1>0$, 
\begin{equation} \mathbb{E}_k[t_j(T_{jk})^3]\ge\frac{\alpha(H_{j})^{3}}{y^{1/2}}\left(1-x_{j}+\frac{(y^{1/6}-1+x_{j})^{3}}{x_{j}^{2}}\right) \ge\frac{\alpha(H_{j})^{3}}{y^{1/2}}\left(1-a+\frac{(y^{1/6}-1+a)^{3}}{a^{2}}\right) \label{monotone}
\end{equation}
Thus, recalling that $H'=\bigcap_{j}T_{j}$, we have 
\begin{eqnarray}
b(H') & \ge & \mathbb{E}_{j}\mathbb{E}_{k}\left[t_{j}(T_{jk})^{3}\right]\nonumber \\
 & \ge & \mathbb{E}_{j}\left[I(j\notin A)\mathbb{E}_{k}\left[t_{j}(T_{jk})^{3}\right]\right]\nonumber \\
 & \ge & \mathbb{E}_{j}\left[I(j\notin A)\frac{\alpha(H_{j})^{3}}{y^{1/2}}\cdot\left(1-a+\frac{(y^{1/6}-1+a)^{3}}{a^{2}}\right)\right]\nonumber \\
 & > & \left(1-a+\frac{(y^{1/6}-1+a)^{3}}{a^{2}}\right)\left(y^{1/2}-2a^{-C_{p}}\right)\beta\nonumber \\
 & = & \left(a^{2}-a^{3}+(y^{1/6}-1+a)^{3}\right)a^{-2}\left(y^{1/2}-2a^{-C_{p}}\right)\beta,\label{long12}
\end{eqnarray}
where the first inequality follows from Jensen's inequality applied
to the convex function $h(x)=x^{3}$, noting that the partition by
$H'$ is a refinement of the partition by $T_{j}$ in each affine
subspace $H_{j}$. The second inequality follows since the left hand
side is a sum of nonnegative terms and therefore we can delete some
of them and the sum does not increase. The third inequality is by
substituting in (\ref{12321}) and (\ref{monotone}). The
fourth inequality is by substituting in (\ref{ijk}). As $y>2^{6}$
and $a\in[0,1]$, we have 
\begin{equation}
a^{2}-a^{3}+(y^{1/6}-1+a)^{3}\ge(y^{1/6}-1)^{3}\ge y^{1/2}/8.\label{long23}
\end{equation}

Choose $a=2^{2/C_{p}}y^{-1/(2C_{p})}$, so $a \in [0,1]$
as $y>16$. It follows from (\ref{long12}) and (\ref{long23}) that
\[
b(H') > \frac{y^{1/2}}{8}a^{-2}\left(y^{1/2}-\frac{y^{1/2}}{2}\right)\beta=\frac{1}{16}a^{-2}y\beta.
\]
Recall $\tau_{p}=\frac{1}{2C_p}>0$. We have 
\[
b(H')>\frac{y^{2\tau_{p}}}{2^{4+4/C_{p}}}\cdot y\beta \geq y^{\tau_{p}}\cdot y\beta=y^{\tau_{p}}b(H).
\]
The second inequality above follows from $y \ge 2^{8+8C_{p}}=2^{\frac{4+4/C_{p}}{1/(2C_{p})}}=2^{\frac{4+4/C_{p}}{\tau_{p}}}$.
Hence, 
\[
b(H')/\beta > (b(H)/\beta)^{\tau_{p}}\cdot b(H)/\beta=(b(H)/\beta)^{1+\tau_{p}}.
\]
Moreover, we have 
\begin{align*}
\text{Codim}(H') & \le \text{Codim}(H)+\sum_{j\in \mathbb{F}_p^n/H} \dim(H/T_j) \\
& \le \text{Codim}(H)+\eta^{-2}p^{\text{Codim}(H)}\\
 & \le \text{Codim}(H)+p^{\text{Codim}(H)}\cdot36/\beta^{2}.
\end{align*}
Thus, the subspace $H'$ has the desired properties. 
\end{proof}

In \cite{FPI}, we proved the following bound on $n_p(\alpha,\alpha^3-\epsilon)$ by repeated application of Lemma 
\ref{lem:mean cubed density increment_small}. This bound is tight up to a constant factor in the tower height when $\epsilon$ is relatively small. 

\begin{thm}\label{firstuppthm}
For $\beta=\alpha^3-\epsilon$, we have $$n_p(\alpha,\beta) \leq \tower\left(p,5+\log\left(\frac{\alpha-\alpha^3}{\epsilon}\right),1/\epsilon\right).$$
\end{thm}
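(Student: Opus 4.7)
The plan is a density-increment argument iterating Lemma \ref{lem:mean cubed density increment_small}. Suppose for contradiction that $n$ exceeds the claimed tower bound, $A \subseteq \mathbb{F}_p^n$ has density $\alpha$, and every nonzero $d$ yields $3$-AP density strictly less than $\beta = \alpha^3-\epsilon$. Starting from $H_0 = \mathbb{F}_p^n$, I would build a nested chain $H_0 \supseteq H_1 \supseteq \cdots$ as follows. At step $k$, the assumption gives $\mathbb{E}_g[\lambda(H_k+g)] < \alpha^3-\epsilon$ after averaging over nonzero $d \in H_k$; hence, as long as $|H_k| > 4\alpha/\epsilon$, Lemma \ref{lem:mean cubed density increment_small} produces $H_{k+1}$ with
\[
\text{Codim}(H_{k+1}) \leq \text{Codim}(H_k) + 144\, p^{\text{Codim}(H_k)}/\epsilon^2 \quad \text{and} \quad b(H_{k+1})-\alpha^3 > 2\bigl(b(H_k)-\alpha^3\bigr) + \tfrac{\epsilon}{2}.
\]

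Inducting from $b(H_0)-\alpha^3 = 0$, the mean-cube gain compounds to $b(H_k)-\alpha^3 > (2^k-1)\epsilon/2$. Since $\alpha(H+g) \in [0,1]$ and averages to $\alpha$, we have $b(H) = \mathbb{E}_g[\alpha(H+g)^3] \leq \mathbb{E}_g[\alpha(H+g)] = \alpha$, so $b(H_k) \leq \alpha$ throughout. The two bounds are incompatible once $k > k_\ast := \lceil \log((\alpha-\alpha^3)/\epsilon) \rceil + O(1)$, so the iteration must terminate at some $k \leq k_\ast$, which it can only do by violating the hypothesis of the lemma: some nonzero $d$ in $H_k$ (hence in $\mathbb{F}_p^n$) witnesses $3$-AP density at least $\beta$, contradicting our initial assumption.

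What remains is codimension bookkeeping. Writing $c_k = \text{Codim}(H_k)$, the recursion $c_{k+1} \leq c_k + 144 p^{c_k}/\epsilon^2$ gives inductively $c_k \leq \text{Tow}(p, k+O(1), O(1/\epsilon^2))$; since $1/\epsilon^2 \leq p^{1/\epsilon}$, the prefactor $1/\epsilon^2$ is absorbed at the cost of one extra tower level, yielding $c_k \leq \text{Tow}(p, k+O(1), 1/\epsilon)$. The hypothesis on $n$ then guarantees $|H_k| > 4\alpha/\epsilon$ at every step $k \leq k_\ast$, completing the argument. The main delicate point is pinning down the additive constant in the tower height tightly enough to land on the stated $5$ (rather than some unspecified universal constant)---this amounts to quantifying carefully how the $144/\epsilon^2$ factor at each level gets absorbed into one additional exponential, and verifying that the mild slack between $k_\ast$ and $\log((\alpha-\alpha^3)/\epsilon)$ is enough to avoid paying extra levels.
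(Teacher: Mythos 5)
Your proposal matches the paper's approach essentially exactly: the paper (pointing to \cite{FPI} for the full details) proves Theorem~\ref{firstuppthm} by precisely this iteration of Lemma~\ref{lem:mean cubed density increment_small}, using $b(H_0)=\alpha^3$, the compounding bound $b(H_k)-\alpha^3 \ge (2^k-1)\epsilon/2$, the ceiling $b(H)\le\alpha$, and the same codimension recursion absorbed into one extra tower level. The only soft spot you flag yourself --- nailing the additive constant at exactly $5$ rather than $O(1)$ --- is indeed just bookkeeping (using $145^2\epsilon^{-4} < p^{p^{p^{1/\epsilon}}}$ to handle the base of the tower and one final level to account for $|H_k|>4\alpha/\epsilon$), and the paper's treatment of the analogous step in Theorem~\ref{thm:largebound} carries it out in the same way.
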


We next prove the main theorem in this section, Theorem \ref{thm:largebound}, by a similar proof to that of Theorem \ref{firstuppthm}. As remarked earlier, we will prove the upper bounds in the more general setting of functions $f:\mathbb{F}_p^{n}\rightarrow [0,1]$ instead of subsets.
We assume that the function $f:\mathbb{F}_{p}^{n}\rightarrow[0,1]$ has
density $\alpha$ and, for each nonzero $d$, the density of $3$-APs
with common difference $d$ is less than $\beta<\alpha^{3}$. Starting
from $H_{0}=\mathbb{F}_{p}^{n}$, we repeatedly apply Lemma \ref{lem:mean cubed density increment_small}
until we can apply Lemma \ref{lem:mean cubed density increment_large},
at each step finding a subspace of substantially larger mean cube
density at the expense of having a larger codimension. As the mean
cube density is at most $\alpha$, this yields the desired upper bound
on the dimension $n$. 

\begin{proof}[Proof of Theorem \ref{thm:largebound}.]
Theorem \ref{firstuppthm} gives the first desired bound in Theorem \ref{thm:largebound}. So we may and will assume that $\beta \leq 2^{-8-8C_p}\alpha$. 
We assume that $f:\mathbb{F}_{p}^{n}\rightarrow[0,1]$ has
density $\alpha$ and, for each nonzero $d$, the density of $3$-APs
with common difference $d$ is less than $\beta=\alpha^3-\epsilon$. Let $H_{0}=\mathbb{F}_{p}^{n}$,
so $b(H_{0})=\alpha^{3}$. We define a sequence of subspaces $H_{0}\supset H_{1} \supset \cdots \supset H_s$ recursively. Note that this implies 
$b(H_0) \leq b(H_1) \leq \ldots \leq b(H_s)$. Thus, for any $0\le i\le s$, $b(H_i)\ge b(H_0) = \alpha^3$, and $b(H_i)\le b(\{0\}) \le \alpha$ where $\{0\}$ is the trivial subspace containing only $0$.

If $b(H_i)<2^{8+8C_p}\beta$ and $|H_i| \geq 4\alpha/\epsilon$, then we apply Lemma \ref{lem:mean cubed density increment_small} to obtain a subspace 
$H_{i+1} \subset H_i$ with $$b(H_{i+1})-\alpha^3 \geq 2(b(H_i)-\alpha^3)+\epsilon/2$$ and 
\[
\text{Codim}(H_{i+1})\le\text{Codim}(H_{i})+p^{\text{Codim}(H_{i})}\cdot144/\epsilon^{2}.
\]
It follows that $2\text{Codim}(H_{i+1}) \leq \max\left(300^2\epsilon^{-4},p^{2\text{Codim}(H_{i})}\right)$. In particular, 
$\text{Codim}(H_{i+1})$ is at most a tower of $p$'s of height $i$ with a $300^2\epsilon^{-4}$ on top. As long as we applied Lemma \ref{lem:mean cubed density increment_small} to obtain $H_i$, by induction on $i$, we have 
\begin{equation} \label{firstineqb} b(H_{i}) \geq \alpha^3 +(2^i-1)\epsilon/2.
\end{equation}

Let $s_1$ be the minimum nonnegative integer $i$ for which $b(H_i) \geq 2^{8+8C_p}\beta$ or $|H_i| < 4\alpha/\epsilon$. We have $\alpha^3 \geq 2^{8+8C_p}\beta$ and $s_1 = 0$, or, by 
(\ref{firstineqb}), 
\begin{equation}\label{s1bound}
s_{1}\le\log(2^{10+8C_{p}}\beta/\epsilon) = \log(\beta/\epsilon)+\Theta(\log p).
\end{equation}
As $300^2\epsilon^{-4}<p^{p^{p^{1/\epsilon}}}$, we have 
$$\text{Codim}(H_{s_1})\le \tower(p,s_1+3,1/\epsilon).$$ 

If $|H_{s_1}|\geq 4\alpha/\epsilon$, and $|H_i|>2\alpha/\beta$ for some $i \geq s_1$, then we apply Lemma \ref{lem:mean cubed density increment_large} 
to find a subspace $H_{i+1}\subset H_{i}$ with 
\[
b(H_{i+1})/\beta\ge(b(H_{i})/\beta)^{1+\tau_{p}}
\]
and 
\[
\text{Codim}(H_{i+1})\le\text{Codim}(H_{i})+p^{\text{Codim}(H_{i})}\cdot36/\beta^{2}.
\]
Here $\tau_p =1/(2C_p)$. 
It follows that $2\text{Codim}(H_{i+1}) \leq \max\left(80^2\beta^{-4},p^{2\text{Codim}(H_{i})}\right)$. Let $s_2$ be the number of times that we apply Lemma \ref{lem:mean cubed density increment_large} before we cannot anymore. Hence, the number of subspaces $s$ we pick before stopping is $s=s_1+s_2$. 

If $\beta \leq 2^{-8-8C_p}\alpha^3$, then $s_1=0$ and we have 
$$\alpha/\beta \geq b(H_s)/\beta \geq \left(b(H_{s_1})/\beta \right)^{(1+\tau_p)^{s_2}} = (\alpha^3/\beta)^{(1+\tau_p)^{s_2}},$$
from which it follows that 
\[
s_2 \leq \log_{1+\tau_p}\left(\frac{\log (\alpha/\beta)}{\log(\alpha^3/\beta)}\right) \leq O\left((\log p)\log\left(\frac{\log (\alpha/\beta)}{\log(\alpha^3/\beta)}\right)\right).
\]
As $80^2\beta^{-4}<p^{p^{p^{1/\beta}}}$, we have 
\[
\text{Codim}(H_s) \leq \tower\left(p,s_2+3,1/\beta \right).
\] We also have $|H_s|<2\alpha/\beta$. Since $p^n=|H_s|p^{\codim(H_s)}$, we obtain that $n$ is less than $\tower(p,s_2+4,1/\beta)$. This gives the third desired bound. 
 
If $2^{-8-8C_p}\alpha^3 < \beta \leq 2^{-8-8C_p}\alpha$, we have $$\alpha/\beta \geq b(H_s)/\beta \geq \left(b(H_{s_1})/\beta \right)^{(1+\tau_p)^{s_2}} \geq 2^{(8+8C_p)(1+\tau_p)^{s_2}},$$ from which it follows that $$s_2 \leq \log_{1+\tau_p} \frac{\log (\alpha/\beta)}{8+8C_p} = \Theta\left((\log p) \log  \log_p (\alpha/\beta)\right).$$
As $80^2\beta^{-4}<80^2 \cdot 2^{32+32C_p} \alpha^{-12} < p^{p^{p^{1/\epsilon}}}$, we have $$\text{Codim}(H_s) \leq \tower\left(p,s_2+3,\max(1/\epsilon,\text{Codim}(H_{s_1}))\right) \leq \tower\left(p,s_1+s_2+6,1/\epsilon \right).$$
We also have $|H_s|<4\alpha/\epsilon$ or $|H_s|<2\alpha/\beta$. Since $p^n=|H_s|p^{\text{Codim}(H_s)}$, we obtain that $n$ is less than $\tower\left(p,s_1+s_2+7,1/\epsilon\right)$. 
This gives the second desired bound. 
\end{proof}
 
\section{Lower bound}

\label{sectionlowerbound}

In this section, a lower bound construction is given which matches the tower height in the 
upper bound from the previous section up to an absolute constant factor and an additive constant depending on the characteristic $p$.  

In \cite{FPI}, we gave a probabilistic construction which proves the following theorem. It matches the upper bound when $\epsilon$ is small compared to $\alpha$. 
\begin{thm}
\label{thm:Lower bound for line density}  \cite{FPI} For $0<\alpha\leq1/2$ and
$\epsilon\leq 2^{-161}p^{-8}\alpha^3$, there
exists $A \subset \mathbb{F}_{p}^{n}$ of density at least $\alpha$, where $n$ is a tower
of $p$'s of height at least $\frac{1}{52}\log(\alpha^{3}/\epsilon)$,
such that for all nonzero $d$ in $\mathbb{F}_{p}^{n}$, the density
of $3$-APs with common difference $d$ in $A$ is less than $\alpha^{3}-\epsilon$. 
\end{thm}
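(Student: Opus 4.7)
The plan is a recursive construction of functions $f_k : \mathbb{F}_p^{n_k} \to [0,1]$ of mean $\alpha$, indexed by a level $k$, such that every nonzero $d \in \mathbb{F}_p^{n_k}$ has $3$-AP density of $f_k$ with common difference $d$ at most $\alpha^3 - \epsilon_k$. Here $n_k$ grows as a tower of $p$'s of height $k$, while $\epsilon_k$ shrinks by at most a factor of roughly $2^{52}$ per level. Setting $K \approx \tfrac{1}{52}\log(\alpha^3/\epsilon)$, iterating to $f_K$, and converting to a set $A$ by independent sampling (including each $x$ with probability $f_K(x)$) completes the construction: standard Chernoff-type concentration over the $p^{n_K}$ common differences shows that, for $n_K$ sufficiently large, the resulting $A$ has density at least $\alpha$ and $3$-AP density at most $\alpha^3 - \epsilon$ for every nonzero $d$. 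The base case $k = 1$ can be made very explicit, using a slight negative perturbation of the constant $\alpha$ along a carefully chosen Fourier mode on a constant-dimensional $\mathbb{F}_p^{n_1}$, giving $\epsilon_1$ at least a definite constant times $\alpha^3$.

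The recursive step exploits the direct sum decomposition $\mathbb{F}_p^{n_{k+1}} = \mathbb{F}_p^{n_k} \oplus W$ with $\dim W$ exponential in $n_k$. Write
\[
f_{k+1}(x, w) = f_k(x) + \rho \cdot g_k(x, w),
\]
where $\rho > 0$ is a small amplitude, and $g_k$ is a mean-zero function whose Fourier support lies on characters nontrivial on $W$. For common differences of the form $(d_1, 0)$ with $d_1 \neq 0$, averaging over $w$ kills the contributions from $g_k$, so the corresponding $3$-AP density agrees with that of $f_k$ at common difference $d_1$ up to $O(\rho^2)$ error, preserving the inductive bound for $\mathbb{F}_p^{n_k}$-directions. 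For $d$ with nonzero $W$-component, the $3$-AP count picks up new terms from $g_k$, and the task is to choose $g_k$ so that these terms deterministically contribute the required negative bias of at least $\epsilon_{k+1}$ in every such direction.

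The main obstacle is arranging this uniform negative bias across the exponentially many $d$ with nonzero $W$-component while keeping $f_{k+1}$ valued in $[0,1]$. I would attempt this via a probabilistic construction: sample $g_k$ as a random combination of characters on $W$ with a judiciously chosen correlation structure among the coefficients, then Fourier-expand the $3$-AP density of $f_{k+1}$ for each common difference and apply a union bound over $d$ to show that with positive probability every $W$-direction receives the required bias. The success of this union bound is what forces $\dim W$ to be exponential in $n_k$---we need enough room in $W$ to independently bias every direction simultaneously---which in turn produces the tower growth of $n_k$. The constants in the stated theorem, including the $\tfrac{1}{52}$ in the tower height and the condition $\epsilon \le 2^{-161} p^{-8} \alpha^3$, would emerge from careful bookkeeping in this recursion, tracking the amplitude $\rho$ and the efficiency of the bias injection at each level.
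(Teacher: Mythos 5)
The statement you are proving is cited in this paper from [FPI]; its proof is not reproduced here, but the paper gives a detailed outline of the [FPI] construction in Section 3 and fully proves the closely related Theorem~16, so a comparison is possible. Your high-level framework -- a recursive weighted construction $f_k:\mathbb{F}_p^{n_k}\to[0,1]$ on a product of spaces of tower-growing dimension, with the base step handling the first level, each step adding a fresh factor $W$, and a final sampling step (exactly Lemma~11 here) to pass from a weighted function to a set -- coincides with the paper's. But the mechanism you propose for the recursive step has a real gap.

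The problem is your assumption that the perturbation $f_{k+1}(x,w)=f_k(x)+\rho\,g_k(x,w)$ has \emph{small} amplitude $\rho$. Consider a pure-$W$ direction $d=(0,d_2)$ with $d_2\neq 0$. Expanding the $3$-AP density gives leading term $\mathbb{E}_x[f_k(x)^3]$, which by convexity is at least $\alpha^3$, and -- once any level (including your base level) injects real variance into $f_k$ -- is bounded below by $\alpha^3+c\alpha^3$ for a constant $c>0$ that does \emph{not} shrink with $k$. The corrections you can add are of order $\alpha\rho^2$ from the quadratic terms (and $\rho^3$ from the cubic), so bringing the $3$-AP density down to $\alpha^3-\epsilon_{k+1}$ requires $\alpha\rho^2\gtrsim\alpha^3$, i.e. $\rho\gtrsim\alpha$, which is not small. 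But then the ``$O(\rho^2)$ error'' you incur in the old $\mathbb{F}_p^{n_k}$-directions is of the same order $\alpha\rho^2\gtrsim\alpha^3$, vastly larger than $\epsilon_k$ for $k\ge 2$, and the inductive hypothesis collapses. A related issue: with $g_k$ chosen as a \emph{random} combination of characters, the expected bias you inject in any fixed nonzero direction $d_2$ is exactly zero (the biases must average to zero over $d$), so a union bound alone cannot guarantee a deterministic negative bias in every direction; the negativity has to be built in structurally, not found probabilistically.

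The paper's construction resolves both problems with two ingredients your plan does not have. First, the perturbation is \emph{multiplicative}, not additive with small amplitude: on each slice $\{x\}\times W$, the constant $f_k(x)$ is replaced by $f_k(x)$ times a scaled indicator of a set with relatively few three-term APs (in [FPI], elements whose first coordinate lies in an interval of length roughly $2p/3$; in this paper, a maximum $3$-AP-free set $A_{p,r}$), and in [FPI] this replacement is performed on a controlled, growing fraction of the slices. The resulting negative bias in $W$-directions therefore scales with $f_k(x)^3$, which is precisely what is needed to beat the convexity gap $\mathbb{E}[f_k^3]-\alpha^3$. Second, the $W$-part of the replacement is coded by dot products with fresh vectors $v_j(x)$ chosen so that for any distinct $a,b,c\in\mathbb{F}_p^{n_k}$ the vectors $\{v_j(a),v_j(b),v_j(c)\}$ are linearly independent; Lemma~17 here (Lemma~12 in [FPI]) then shows that the $3$-AP density in every $\mathbb{F}_p^{n_k}$-direction equals that of $f_k$ \emph{exactly}, with no error term at all. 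Your sketch would need replacements for both of these, in particular a reason why your $O(\rho^2)$ error in old directions is not only small but does not accumulate, and a deterministic source of negative bias in every $W$-direction of the correct order.
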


We next discuss how to obtain the lower bound in the remaining ranges in Theorem \ref{main2}. If $\epsilon>2^{-161}p^{-8}$, as $\epsilon<\alpha^3$, we have $\alpha>2^{-54}p^{-3}$, and it follows from the upper bound that $n_p(\alpha,\beta)$ is at most a constant depending only on $p$. So we may suppose $\epsilon < 2^{-161}p^{-8}$. If $\epsilon<\alpha^3/(\log 1/\alpha)^{\log p}$, then the above theorem gives the lower bound in the first case of Theorem \ref{main2}. The other case, when 
$\epsilon \geq \alpha^3/(\log 1/\alpha)^{\log p}$, we will deduce from the following theorem, which gives a lower bound in the case $\epsilon$ is large and $p \geq 19$. 

\begin{thm}
\label{thm:epsilon_large for line density} For $p\ge 19$, $0<\alpha \leq 1/2$, and $\alpha^{3+e^{-133}} \leq \beta \leq \alpha^3 \min(p^{-\log p},p^{-50})$, there exists a subset $A \subset \mathbb{F}_{p}^{n}$ of density at least $\alpha$, where $n$ is a
tower of $p$'s of height at least $\frac{1}{30}(\log p)\ln \left(\frac{\log(1/\alpha)}{\log(\alpha^{3}/\beta)}\right) $
with an $\alpha^{3}/\beta$ on top, such that for each nonzero
$d \in \mathbb{F}_{p}^{n}$, the density of $3$-APs with common difference
$d$ in $A$ is less than $\beta$. That is,  
$$n_p(\alpha,\beta) \geq \tower\left(p,\frac{1}{30}(\log p)\ln \left(\frac{\log(1/\alpha)}{\log(\alpha^{3}/\beta)}\right),\alpha^3/\beta\right).$$
\end{thm}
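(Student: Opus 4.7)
The plan is to construct $A$ iteratively, mirroring the density increment structure of the upper bound in Theorem \ref{thm:largebound}. I would build a sequence $A_0 \subset V_0,\, A_1 \subset V_1,\ldots, A_k \subset V_k$ with $\dim V_i = p^{\dim V_{i-1}}$ and $k \sim \tfrac{1}{30}(\log p)\ln(\log(1/\alpha)/\log(\alpha^3/\beta))$, starting from $\dim V_0 \approx \alpha^3/\beta$. Each $A_i$ should have density $\alpha$ and 3-AP density less than $\beta$ in every nonzero direction of $V_i$. The final set $A = A_k$ then lives in a space of dimension $\mathrm{Tow}(p, k, \alpha^3/\beta)$, which matches the claim.

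\textbf{Base case.} Take $V_0 = \mathbb{F}_p^{n_0}$ with $n_0$ on the order of $\alpha^3/\beta$. I would construct $A_0 \subset V_0$ of density $\alpha$ with 3-AP density strictly less than $\beta$ in every nonzero direction. Since $\beta \le \alpha^3\min(p^{-\log p}, p^{-50})$ is substantially smaller than $\alpha^3$, the required spreading should be attainable probabilistically (a random set of density $\alpha$ with corrections to kill the exceptional directions via alteration or the Lov\'asz local lemma) or by adapting the sharp multicolor sum-free construction of Kleinberg--Sawin--Speyer, Norin, and Pebody, whose extremal structure naturally distributes 3-AP counts across all directions. The precise choice $n_0 \approx \alpha^3/\beta$ is dictated by demanding that the 3-AP density bound $\beta$ is achievable at density $\alpha$.

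\textbf{Inductive lift.} Given $A_{i-1} \subset V_{i-1}$, decompose $V_i = V_{i-1} \oplus W$ and define
\[
A_i = \{(v, w) \in V_{i-1}\oplus W : v + \phi(w) \in A_{i-1}\}
\]
for a carefully chosen shift function $\phi : W \to V_{i-1}$. This preserves density $\alpha$ automatically. A 3-AP in $A_i$ with common difference $(d_V, d_W)$ pulls back, via the change of variables $u = v+\phi(w)$, to a triple in $A_{i-1}$ with displacements modified by the discrete second derivative $\phi(w+2d_W) - 2\phi(w+d_W) + \phi(w)$. Choosing $\phi$ so that this second derivative is equidistributed over $V_{i-1}$ for every nonzero $d_W \in W$ forces the count of 3-APs in direction $(d_V, d_W)$ with $d_W \ne 0$ to look ``random'' in $A_{i-1}$, contributing essentially $\alpha^3 < \beta$. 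The directions with $d_W = 0$ reduce directly to the inductive 3-AP bound for $A_{i-1}$.

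\textbf{Counting the iterations.} Each iteration of the lift, viewed through the lens of the upper bound's proof, corresponds to one application of Lemma \ref{lem:mean cubed density increment_large}, which can only amplify $\log(b(H)/\beta)$ by a factor of $1+\tau_p$ where $\tau_p = 1/(2C_p)$. Starting from $\log(\alpha^3/\beta)$ at the base, reaching $\log(\alpha/\beta)$ (the natural ceiling) would require
\[
k \sim \log_{1+\tau_p}\!\left(\frac{\log(\alpha/\beta)}{\log(\alpha^3/\beta)}\right) = \Theta\!\left((\log p)\ln\!\left(\frac{\log(1/\alpha)}{\log(\alpha^3/\beta)}\right)\right)
\]
iterations, matching the claimed tower height with the constant $\tfrac{1}{30}$ absorbing losses.

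\textbf{Main obstacle.} The crux is producing, in each inductive step, the shift function $\phi : W \to V_{i-1}$ whose second differences are well-equidistributed in $V_{i-1}$ simultaneously in every nonzero $d_W$-direction of $W$, while keeping $\dim W \le p^{\dim V_{i-1}}$. Whether executed probabilistically (requiring a union bound over $p^{\dim W}$ directions to succeed) or by an explicit construction tied to the sharp multicolor sum-free example, this is where the extremal exponent $C_p$ of the arithmetic removal lemma must be used tightly, and where the $(\log p)$ factor in the tower height genuinely enters: any slack in the equidistribution would allow the density increment argument of the upper bound to short-circuit in strictly fewer than $k$ steps, collapsing the construction.
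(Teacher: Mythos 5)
Your inductive lift contains a fatal conceptual error. The set $A_i=\{(v,w):v+\phi(w)\in A_{i-1}\}$ has, for a common difference $(d_V,d_W)$ with $d_W\ne 0$, a $3$-AP count that pulls back (as you correctly observe) to the count of triples $u,\,u+e_1,\,u+e_2\in A_{i-1}$ with $e_1=d_V+\phi(w+d_W)-\phi(w)$ and $e_2=2d_V+\phi(w+2d_W)-\phi(w)$, summed over $w$. If the map $w\mapsto(\phi(w+d_W)-\phi(w),\,\phi(w+2d_W)-\phi(w))$ equidistributes over $V_{i-1}^2$, the average of this triple count over $w$ is essentially $|A_{i-1}|^3/|V_{i-1}|^2=\alpha^3|V_{i-1}|$, giving $3$-AP density $\approx\alpha^3$ in every new direction. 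But we are in the regime $\beta<\alpha^3$ (indeed $\beta\le\alpha^3 p^{-50}$), so $\alpha^3$ is strictly \emph{too large}; your written inequality ``$\alpha^3<\beta$'' is backwards, and even corrected, the mechanism would certify the wrong bound. A shift-based lift can at best preserve or randomize the $3$-AP density; it has no mechanism for \emph{reducing} it below the random bound, which is exactly what the construction must do.

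The paper's construction avoids this by replacing the shift $\phi$ with a \emph{multiplicative cap-set mask}. At level $i$, for each $x\in\mathbb{F}_p^{n_{i-1}}$ one fixes random vectors $v_1(x),\ldots,v_{r_i}(x)\in\mathbb{F}_p^{m_i}$ and sets $f_i(x,y)\propto f_{i-1}(x)$ only when the linear image $\left(y\cdot v_1(x),\ldots,y\cdot v_{r_i}(x)\right)$ lands in a fixed cap set $A_{p,r_i}\subset\mathbb{F}_p^{r_i}$ (with a compensating scale factor $R_i/|A_{p,r_i}|$ to preserve the mean). Because $A_{p,r_i}$ has no nontrivial $3$-AP, the contribution from any new direction $d$ (zero on the first $n_{i-1}$ coordinates, with last-block part $d'\ne 0$) vanishes inside every copy of $\mathbb{F}_p^{m_i}$ \emph{except} those finitely many $x$ for which $d'\perp v_j(x)$ for all $j$; Lemma \ref{lem:Choice of random directions-1} ensures fewer than $m_i$ such $x$, and $m_i$ is tuned so that the residual density stays below $\beta\alpha^3$. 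Old directions are handled separately by Lemma \ref{lem:Stability-2}, which shows the $3$-AP density equals that of $f_{i-1}$ exactly (here the linear-independence of the $3r_i$ vectors $v_j(a),v_j(b),v_j(c)$ is used). Your ``main obstacle'' paragraph asks for the wrong thing: good equidistribution of second differences of $\phi$ would only tighten the $3$-AP density toward $\alpha^3$, not push it below. The obstacle the construction must overcome is to \emph{annihilate} $3$-APs in new directions, and cap sets are the tool that does this; the tight bound $|A_{p,r}|\ge((p+1)/2.0001)^{r-2}$ (Corollary \ref{lem:free-set-large-n}) is what controls the loss in density per level and hence the number $s$ of levels, which is where the $(\log p)$ factor and the restriction $p\ge 19$ genuinely enter. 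Finally, note the paper first builds a weighted $f:\mathbb{F}_p^n\to[0,1]$ (Theorem \ref{thm:epsilon_large}) and then passes to a genuine set via sampling (Lemma \ref{lem:density to weighted}); your proposal works unweighted throughout, but this is not where the trouble lies.
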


Note that Theorem \ref{thm:epsilon_large for line density} does not directly apply for $\beta > \alpha^3 \min(p^{-\log p},p^{-50})$. Choose a constant $C$ so that $C\ge \max(12,\frac{8+8C_p}{\log p})$, which is further independent of $p$ (recall that $C_p = \Theta(\log p)$). If $\alpha^{e^{-133}} > p^{-C\log p}$, then $\alpha$ is bounded below by a constant depending only on $p$, and from Theorem \ref{thm:largebound}, $n_p(\alpha,\beta)$ is at most a tower of $p$'s of constant height (depending on $p$). Hence, we can assume that $\alpha^{e^{-133}} \le p^{-C\log p}$ (since the bounds in Theorem \ref{main2} are up to additive constants depending on $p$). By monotonicity of $n_p(\alpha,\beta)$ in $\beta$, as $\alpha^{e^{-133}} \le p^{-C\log p}$, we can apply Theorem  \ref{thm:epsilon_large for line density}
with $\beta= \alpha^3 p^{-C\log p}\in [\alpha^{3+e^{-133}},\alpha^3\min(p^{-\log p},p^{-50})]$ to get the bound 
\begin{eqnarray*}n_p(\alpha,\beta) & \geq &  n_p(\alpha,\alpha^3 p^{-C\log p}) \geq \tower\left(p,\frac{1}{30}(\log p)\ln \left(\frac{\log (1/\alpha)}{\log p^{C\log p}} \right),p^{C\log p}\right) \\ & > & 
 \tower\left(p,\frac{1}{30}(\log p)\log \log (1/\alpha) - (\log p)\log \log p\right).\end{eqnarray*}
 We thus have the following corollary. 

\begin{cor}
\label{cor:weighted} 
For $\beta > p^{-C\log p}\alpha^3$, we have $n_p(\alpha,\beta)$ is at least a tower of $p$'s of height \\ $\frac{1}{30}(\log p)\log \log (1/\alpha) - O_p(1)$. 
\end{cor}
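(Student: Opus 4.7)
The plan is to apply Theorem \ref{thm:epsilon_large for line density} at an auxiliary value $\beta' \le \beta$ and push the resulting lower bound up to $\beta$ by monotonicity of $n_p(\alpha,\beta)$ in $\beta$. Since enlarging $\beta$ only strengthens the conclusion demanded in the definition of $n_p(\alpha,\beta)$, this function is monotone nondecreasing in $\beta$, and any lower bound valid at some $\beta' \le \beta$ carries over to $\beta$.

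First I would dispose of the trivial regime. If $\alpha^{e^{-133}} > p^{-C\log p}$, then $\log(1/\alpha) = O_p(1)$ and the claimed lower bound $\frac{1}{30}(\log p)\log\log(1/\alpha) - O_p(1)$ reduces to $-O_p(1)$, which is vacuous. Hence I may assume $\alpha^{e^{-133}} \le p^{-C\log p}$.

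In this remaining regime I would set $\beta' := \alpha^3 p^{-C\log p}$; the hypothesis $\beta > p^{-C\log p}\alpha^3$ gives $\beta' \le \beta$. The main technical step is to verify that $\beta'$ lies in the admissible window $[\alpha^{3+e^{-133}},\,\alpha^3\min(p^{-\log p},p^{-50})]$ for Theorem \ref{thm:epsilon_large for line density}. The upper endpoint follows from the defining conditions $C \ge 12$ and $C \ge (8+8C_p)/\log p$ together with $p \ge 19$, which force $C\log p \ge \max(\log p, 50)$. The lower endpoint is exactly the rearrangement of the standing assumption $\alpha^{e^{-133}} \le p^{-C\log p}$.

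Once that check is done, Theorem \ref{thm:epsilon_large for line density} yields a tower lower bound on $n_p(\alpha,\beta')$ of height $\tfrac{1}{30}(\log p)\ln(\log(1/\alpha)/(C\log^2 p))$ with $\alpha^3/\beta' = p^{C\log p}$ on top, and monotonicity transfers this to $n_p(\alpha,\beta)$. To match the stated form $\frac{1}{30}(\log p)\log\log(1/\alpha) - O_p(1)$, I would absorb the additive $\ln(C\log^2 p)$ and the value $p^{C\log p}$ on top into $O_p(1)$ adjustments to the tower height, noting that for $p \ge 19$ the quantity $p^{C\log p}$ on top contributes only a bounded number of extra tower levels. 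I expect the only non-routine step to be the range verification for $\beta'$; everything else is elementary bookkeeping on tower heights.
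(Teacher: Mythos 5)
Your argument matches the paper's own proof of Corollary \ref{cor:weighted} essentially line for line: discard the regime $\alpha^{e^{-133}} > p^{-C\log p}$ as vacuous since there $\log\log(1/\alpha) = O_p(1)$, then apply Theorem \ref{thm:epsilon_large for line density} at $\beta' = \alpha^3 p^{-C\log p}$ (checking it lies in the admissible window $[\alpha^{3+e^{-133}},\,\alpha^3\min(p^{-\log p},p^{-50})]$), and transfer to $\beta$ by monotonicity of $n_p(\alpha,\cdot)$. The proposal is correct and follows the paper's route.
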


This corollary gives the lower bound in Theorem \ref{main2} when $ \alpha^3(1-2^{-8-8C_p}) > \epsilon \ge \alpha^3/(\log (1/\alpha))^{\log p}$ as then $\beta \ge 2^{-8-8C_p}\alpha^3 \ge p^{-C\log p}\alpha^3$. For $\epsilon \ge \alpha^3(1-2^{-8-8C_p})$, we can check that the lower bound in Theorem \ref{main2} directly follows from the bound in Corollary \ref{cor:weighted} when $ \alpha^3 (1-2^{-8-8C_p}) \le \epsilon < \alpha^3 (1-p^{-C\log p})$ and from the bound in Theorem \ref{thm:epsilon_large for line density} when $\epsilon \ge \alpha^3(1-p^{-C\log p})$. This completes the proof of Theorem \ref{main2}. Our goal for the remainder of the section is to prove Theorem \ref{thm:epsilon_large for line density}.

\subsection{From weighted to unweighted}

For the construction of the set $A$ in Theorem \ref{thm:epsilon_large for line density}, as in \cite{FPI}, 
it will be more convenient to work with a weighted set in $\mathbb{F}_{p}^{n}$,
which is given by a function $f:\mathbb{F}_{p}^{n}\to[0,1]$. 
The weighted analogue of Theorem \ref{thm:epsilon_large for line density} is given below.
Note that for the weighted constructions, it will be convenient to normalize and 
replace $\epsilon$ by $\epsilon\alpha^{3}$ and $\beta$ by $\beta\alpha^{3}$. 

\begin{thm}
\label{thm:epsilon_large} Let $p\ge19$ be a prime, $\alpha>0$, and $\alpha^{e^{-132}} \leq \beta \leq \min(p^{-\log p},p^{-100})$. 
There exists a function $f:\mathbb{F}_{p}^{n}\to[0,1]$ of density $\alpha$, where $n$
is a tower of $p$'s of height at least $\frac{1}{20}(\log p)\ln \left(\frac{\log(1/\alpha)}{\log(1/\beta)}\right)$
with a $1/\beta$ on top, such that for each nonzero $d$, the density
of $3$-APs with common difference $d$ of $f$ is less than $\beta\alpha^{3}$. 
\end{thm}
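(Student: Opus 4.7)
The plan is a recursive amplification construction that produces a sequence $f_0, f_1, \dots, f_k$ with $f_i: \mathbb{F}_p^{n_i} \to [0,1]$ and $n_{i+1} = p^{n_i}$, so the dimensions grow as a tower of $p$'s with $n_0$ on top. Each $f_i$ has density $\alpha_i$, and we maintain the invariant that for every nonzero $d \in \mathbb{F}_p^{n_i}$, the density of 3-APs in $f_i$ with common difference $d$ is strictly less than $\beta \alpha_i^3$. The final $f = f_k$ is taken when $\alpha_k \leq \alpha$.

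The seed $f_0$ on $\mathbb{F}_p^{n_0}$ with $n_0 \approx 1/\beta$ is constructed by a random or Fourier-analytic argument in the spirit of \cite{FPI}, having density $\alpha_0$ with $\log(1/\alpha_0) = \Theta(\log(1/\beta))$ and satisfying the invariant above; concretely, one samples values in $[0,1]$ and uses concentration together with a union bound over the $\lesssim p^{n_0}$ candidate common differences, or forces the small Fourier coefficients to vanish. The smallness hypotheses $\beta \leq \min(p^{-\log p},p^{-100})$ and the assumption $p \geq 19$ supply the slack needed for this base case and for propagating constants through the iteration.

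The heart of the proof is the amplification step: given $f_i$ satisfying the invariant, define $f_{i+1}$ on $\mathbb{F}_p^{n_{i+1}}$ so that $\log(1/\alpha_{i+1}) = c_p \log(1/\alpha_i)$ for a multiplicative factor $c_p = 1 + \Theta(1/\log p) > 1$, while preserving the 3-AP invariant. The natural construction is a product-type definition indexed by the $p^{n_i}$ coordinates of the enlarged space, in which evaluations of $f_i$ appear as local weights so that (i) the density of $f_{i+1}$ becomes a prescribed power of $\alpha_i$, and (ii) 3-AP counts in $f_{i+1}$ with common difference $d \in \mathbb{F}_p^{n_{i+1}}$ decompose into a sum or product of 3-AP counts of $f_i$ at auxiliary common differences determined by $d$, allowing the inductive bound to be inherited. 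Iterating gives $\log(1/\alpha_k) = c_p^k \log(1/\alpha_0)$, so choosing $k = \lceil \tfrac{1}{20}(\log p)\ln(\log(1/\alpha)/\log(1/\beta)) \rceil$ makes $\alpha_k \leq \alpha$, and $n_k$ becomes a tower of $p$'s of height $k$ with $n_0 \approx 1/\beta$ on top, which is the claimed bound. The hypothesis $\alpha^{e^{-132}} \leq \beta$ keeps the ratio $\log(1/\alpha)/\log(1/\beta)$ bounded and hence $k$ well-defined.

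The main obstacle is the amplification step: designing it so the density drops by exactly the right factor $c_p$ while the 3-AP bound remains uniform over all of the $\approx p^{n_{i+1}}$ nonzero common differences in the new, much larger space. Each nonzero $d \in \mathbb{F}_p^{n_{i+1}}$ splits into two regimes: "aligned" differences that reduce directly to the inductive 3-AP bound on $f_i$, and "generic" differences where an independence-type estimate yields essentially $\alpha_{i+1}^3$ with enough multiplicative slack to sit below $\beta \alpha_{i+1}^3$. Balancing these two bounds is what forces the factor $c_p = 1 + \Theta(1/\log p)$, and carrying through the bookkeeping with the precise constant $1/20$, together with verifying that the hypothesis $p \geq 19$ provides the required slack, is the delicate portion of the argument.
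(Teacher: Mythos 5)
Your high-level skeleton (an iterated construction in which dimensions grow tower-like and each new nonzero common difference is handled by splitting into ``aligned'' versus ``generic'' cases) is the right shape, and after a change of variables it is even equivalent to the paper's setup: the paper keeps the density of every $f_i$ fixed at $\alpha$ and lets the support fraction $\mu_i$ shrink (with the nonzero value $\alpha/\mu_i$ growing toward $1$, which is what determines the stopping time $s$); if you rescale to the indicator of the support, this is precisely your ``shrinking $\alpha_i$'' invariant $\sigma_i(d)<\beta\mu_i^3$. So the recursion you propose is self-consistent. But two of the concrete steps as you describe them are wrong, and the missing ingredient is the same in both cases.

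The seed is the most serious error. You propose building $f_0$ by sampling random values and using concentration plus a union bound, or by ``forcing the small Fourier coefficients to vanish.'' Both of these produce a quasirandom-like object, and for a quasirandom set of density $\alpha_0$ the $3$-AP density with any fixed nonzero $d$ concentrates near $\alpha_0^3$, which is \emph{larger} than the target $\beta\alpha_0^3$ (as $\beta<1$); quasirandomness is exactly the opposite of what you want. To get $\rho_0(d)\ll\alpha_0^3$ you need a highly structured set with large Fourier coefficients, and the paper uses a Behrend-type $3$-AP-free set $A_{p,m_1}\subset\mathbb{F}_p^{m_1}$ (Lemma \ref{Alonlem}), for which $\rho_0(d)$ is literally zero for nonzero $d$. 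This same ingredient is also what makes the amplification step work, and it is absent from your sketch: the paper's perturbation on each fiber over $x\in\mathbb{F}_p^{n_{i-1}}$ is again a (randomly rotated) copy of a Behrend-type $3$-AP-free set $A_{p,r_i}$, not a generic random weight, and the rotation vectors $v_j(x)$ are chosen independently per fiber precisely so that for an ``aligned'' $d$ the $3$-AP count across fibers factors into three essentially independent events (this is Lemma \ref{lem:Stability-2}, giving the exact identity $\rho_i(d)=\rho_{i-1}(d^*)$), while for a ``new'' $d$ the $3$-AP count vanishes on almost every fiber because no large coset of a codimension-one subspace is aligned with the embedded $3$-AP-free set (Lemma \ref{lem:Choice of random directions-1}). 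Without the $3$-AP-free set as the perturbing object and without the per-fiber rotations, neither regime of your dichotomy yields the cubic decay $\beta\alpha_{i+1}^3$ you need; a plain tensor product loses only a linear factor per step, which is strictly insufficient. Finally, you should also note that the construction must produce a function of density (essentially) exactly $\alpha$, not merely at most $\alpha$ — otherwise the conclusion is vacuous (e.g.\ $f\equiv 0$) and the reduction to Theorem \ref{thm:epsilon_large for line density} via Lemma \ref{lem:density to weighted} breaks down; the stopping condition must be chosen to land on density $\alpha$, not to overshoot.
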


As in \cite{FPI}, we can go from the weighted version to the unweighted version by sampling. 
\begin{lem}
\label{lem:density to weighted} If $n$ is a positive integer, $p$ a prime number, $f:\mathbb{F}_{p}^{n}\to[0,1]$,
$N=p^{n}$, and $\epsilon\geq 2\left(\frac{\ln (12N)}{N}\right)^{1/2}$, then there exists a subset $A\subset\mathbb{F}_{p}^{n}$ such that
the density of $A$ and, for each nonzero $d \in \mathbb{F}_p^n$, the density of $3$-APs with common difference $d$ of $A$ deviate no more than $\epsilon$ from those of $f$. \end{lem}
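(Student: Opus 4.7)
My plan is a standard probabilistic sampling argument. I will define a random subset $A\subseteq \mathbb{F}_p^n$ by including each $x$ independently with probability $f(x)$, and then show via concentration and a union bound that with positive probability $A$ simultaneously $\epsilon$-approximates the density of $f$ and, for every nonzero common difference $d$, the corresponding $3$-AP density of $f$. Existence of such a realization then yields a deterministic $A$ with the required properties.

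The first step is to verify the expectations line up. Linearity gives $\mathbb{E}[|A|/N]=\mathbb{E}_x[f(x)]$. For each nonzero $d$, the three points $x, x+d, x+2d$ are distinct (which is the relevant case since the lemma is applied with $p\ge 19$), so the indicators $1_A(x), 1_A(x+d), 1_A(x+2d)$ are mutually independent and
\[
\mathbb{E}\bigl[T_d/N\bigr] = \mathbb{E}_x[f(x)f(x+d)f(x+2d)],
\]
where $T_d:=\sum_x 1_A(x)1_A(x+d)1_A(x+2d)$. The right-hand side is precisely the $3$-AP density of $f$ with common difference $d$.

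The second step is concentration. Since $|A|=\sum_x 1_A(x)$ is a sum of $N$ independent Bernoullis, Hoeffding's inequality gives $\Pr[\,||A|/N-\alpha|>\epsilon\,]\le 2\exp(-2\epsilon^2 N)$. For nonzero $d$, to get a tail bound on $T_d$ with the right constants, I will partition $\mathbb{F}_p^n$ into the $N/p$ cosets of the cyclic subgroup $\langle d\rangle$; the contributions to $T_d$ from distinct cosets are then independent and bounded, and Hoeffding's inequality for independent bounded sums yields $\Pr[|T_d/N-\mathbb{E}[T_d]/N|>\epsilon]\le 2\exp(-c\epsilon^2 N)$ for an appropriate constant $c$. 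Alternatively one can apply McDiarmid's bounded differences inequality directly to the $N$ independent Bernoullis $\{1_A(x)\}$, using that each of them affects at most three summands of $T_d$, at some loss in the constant.

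Finally, a union bound over the single density event and the $N-1$ nonzero-$d$ events bounds the total failure probability by $2N\exp(-c\epsilon^2 N)$, which the hypothesis $\epsilon\ge 2(\ln(12N)/N)^{1/2}$ makes strictly less than $1$. The main obstacle I anticipate is not conceptual but the sharpening of constants: a direct McDiarmid bound with bounded differences $3$ wastes a factor in the exponent, so to match the stated $\epsilon$ one really needs the coset decomposition (which produces a genuinely independent sum and gives the correct variance) or an equivalent variance-based Chernoff estimate that exploits the local dependency structure. No new ideas beyond standard concentration of measure are required.
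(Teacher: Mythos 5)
Your high‑level plan — sample each $x$ independently with probability $f(x)$, prove concentration of the density and of each $T_d$, and union bound over $d$ — is certainly the right one and is surely what the paper does. The gap is in the concentration step, and contrary to what you suggest, it is not merely a matter of sharpening constants that the coset decomposition automatically fixes.

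To make the union bound work, you need a tail bound of the form $\Pr[|T_d/N - \mathbb{E}T_d/N| > \epsilon] \le 2\exp(-c\epsilon^2 N)$ with $c \ge 1/4$: combined with $\epsilon^2 N \ge 4\ln(12N)$, the total failure probability is roughly $2N\exp(-c\epsilon^2 N)$, and this is less than $1$ exactly when $c > \ln(2N)/(4\ln(12N))$, which tends to $1/4$ as $N \to \infty$. Your coset argument treats $T_d$ as a sum of $N/p$ independent random variables $X_C \in [0,p]$, and Hoeffding for such a sum gives $2\exp\bigl(-2t^2/((N/p)\cdot p^2)\bigr) = 2\exp(-2\epsilon^2 N/p)$, i.e.\ $c = 2/p$. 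This is \emph{worse} than McDiarmid's $c = 2/9$ whenever $p > 9$, and it fails the union bound whenever $p \ge 8$; since the paper applies this lemma for $p \ge 19$, this is fatal rather than a loss of constants. McDiarmid with bounded differences $3$ gives $c = 2/9 < 1/4$, which also fails for $N$ large (roughly $N \ge 12^8/2^9 \approx 8.4 \times 10^5$). So neither of the two concentration arguments you actually describe closes the proof of the lemma as stated.

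A route that does work, within the same sampling framework, is to partition the $N$ starting points of the $3$‑APs into at most $5$ colour classes so that within each class the triples $(x, x+d, x+2d)$ are pairwise disjoint (each coset of $\langle d\rangle$ is a copy of $\mathbb{Z}/p$ and the conflict graph there is the circulant $C_p(1,2)$, which has chromatic number at most $5$). Within each class the summands $Y_x = 1_A(x)1_A(x+d)1_A(x+2d)$ are genuinely independent and $[0,1]$‑valued, so Hoeffding applies with the full $\exp(-2t_k^2/n_k)$ exponent; choosing $t_k \propto \sqrt{n_k}$ with $\sum_k t_k = \epsilon N$, the Cauchy–Schwarz inequality gives a uniform exponent $\ge 2\epsilon^2 N/5$, i.e.\ $c \ge 2/5 > 1/4$, which suffices. (A variance‑based Bernstein bound on the coset sums, or a read‑$k$ concentration inequality with $k=3$, would also do; plain Hoeffding with the range $[0,p]$ does not.) You should make this step explicit rather than leaving it at ``no new ideas beyond standard concentration,'' because the naive tools you name do not reach the threshold the lemma's hypothesis was calibrated to.
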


Using Lemma \ref{lem:density to weighted}, Theorem \ref{thm:epsilon_large for line density} follows from Theorem \ref{thm:epsilon_large}.

\noindent {\bf Proof of Theorem \ref{thm:epsilon_large for line density}:} Apply Theorem \ref{thm:epsilon_large} with $\alpha$ and $\beta'=(\beta/\alpha^3)^2$ to obtain $n$ and $f$ satisfying the conclusion of Theorem \ref{thm:epsilon_large}. Let $\beta=\alpha^{3+z}$. In particular, $n$ is at least a tower of $p$'s of height  $$\frac{1}{20}(\log p) \ln \left(\frac{\log(1/\alpha)}{\log(1/\beta')}\right) = \frac{1}{20}(\log p) \ln \left(1/(2z)\right) \ge \frac{1}{30}(\log p)\log(1/z)$$ 
with a $1/\beta'=\alpha^{-2z}$ on top. 
By the lower bound on $n$, we have 
\[
2\left(\frac{\ln (12p^{n})}{p^{n}}\right)^{1/2}<p^{-n/3}<\alpha^{3+z}/2=\beta/6.
\]
We apply Lemma \ref{lem:density to weighted} with $\epsilon'=\beta/6$.  We obtain a set whose density is in $[\alpha-\beta/6,\alpha+\beta/6]$ and such that the density of $3$-APs for each nonzero common difference is less than $\beta'\alpha^{3}+\beta/6 <\beta/4$. Now, we simply delete or add arbitrary elements to make the set have density $\alpha$. For each nonzero $d$, the $3$-AP density with common difference $d$ can change by at most by $3\beta/6$, so the density of $3$-APs with common difference $d$ is less than $\beta/4+\beta/2< \beta$. \qed

\vspace{0.1cm}
\noindent {\bf Construction idea} 
\vspace{0.1cm}

In the next subsection, we prove Theorem \ref{thm:epsilon_large}. The general idea for the construction has some similarities to the construction we used in \cite{FPI} to prove Theorem \ref{thm:Lower bound for line density}. We partition the dimension $n=m_1+m_2+\cdots+m_s$, where $m_{i+1}$ is roughly exponential in $m_i$ for each $i$, and let $n_i=m_1+m_2+\cdots+m_i$ be the $i^{\textrm{th}}$ partial sum, so $n_1=m_1$ and $n_{i}=n_{i-1}+m_i$ for $2 \leq i \leq s$. Consider the vector space as a product of smaller vector spaces: $\mathbb{F}_p^{n} = \mathbb{F}_p^{m_1} \times \mathbb{F}_p^{m_2} \times \cdots \times \mathbb{F}_p^{m_s}$. In each step $i$, we determine a partial function $f_i:\mathbb{F}_p^{n_i} \rightarrow [0,1]$ with density $\alpha$. The function $f_i$ has the property that for each nonzero $d \in \mathbb{F}_p^{n_i}$, the density of $3$-APs with common difference $d$ of $f_i$ is less than $\beta\alpha^3$. 

For Theorem \ref{thm:epsilon_large}, we let $m_1=\left\lfloor 10000\log_{p}(1/\beta)\right\rfloor$ and  choose $f_1$ to be the characteristic function, appropriately scaled to have average value $\alpha$ on $\mathbb{F}_p^{m_1}$, of a maximum subset of $\mathbb{F}_p^{m_1}$ with no $3$-AP. 

For $i \geq 2$, observe that we can use $f_{i-1}$ to define a function $g_i:\mathbb{F}_p^{n_{i}} \rightarrow [0,1] $ by letting $g_i(x)=f_{i-1}(y)$, where $y$ is the first $n_{i-1}$ coordinates of $x$. Thus, $g_i$ has constant value $f_{i-1}(y)$ on the copy of $\mathbb{F}_p^{m_i}$ consisting of those elements of $\mathbb{F}_p^{n_{i}}$ whose first $n_{i-1}$ coordinates equal $y$. We perturb $g_i$ to obtain $f_{i}$  so that it has several useful properties. 

We first describe some of the useful properties $f_{i}$ will have.  While $g_i$ has constant value $f_{i-1}(y)$ on each copy of $\mathbb{F}_p^{m_i}$ whose first $n_{i-1}$ coordinates are equal to $y$, the function $f_{i}$ will not have this property, but will still have average value $f_{i-1}(y)$ on each of these copies. Another useful property is that for each $d \in \mathbb{F}_p^{n_i}$ such that $d$ is not identically $0$ on the first $n_{i-1}$ coordinates, the density of $3$-APs with common difference $d$ in $f_{i}$ is equal to the density of $3$-APs with common difference $d^{*}$ in $f_{i-1}$, where $d^{*} \in \mathbb{F}_p^{n_{i-1}} \setminus \{0\}$ is the first $n_{i-1}$ coordinates of $d$. Once we have established this property, it suffices then to check that for each nonzero $d \in \mathbb{F}_p^{n_i}$ with the first $n_{i-1}$ coordinates of $d$ equal to $0$, the density of $3$-APs with common difference $d$ is less than $\beta \alpha^3$. In order to check this, it now makes sense to explain a little more about how we obtain $f_i$ from $g_i$. 

Consider a set $B \subset \mathbb{F}_p^{m_{i}}$ with relatively few three-term arithmetic progressions (considerably less than the random bound) given its size. In \cite{FPI}, we took $B$ to be the elements whose first coordinate is in an interval of length roughly $2p/3$ in $\mathbb{F}_p$. Here, we take $B$ to be the elements whose first $r_i$ coordinates (for an appropriately chosen $r_i$) are in a maximum subset of $\mathbb{F}_p^{r_i}$ with no $3$-AP. 

We consider the $p^{n_{i-1}}$ copies of $\mathbb{F}_p^{m_{i}}$ in $\mathbb{F}_p^{n_{i}}$, where each copy has the first $n_{i-1}$ coordinates fixed to some $y \in \mathbb{F}_p^{n_{i-1}}$. For each copy $A$, we consider a random copy of $B$ in $A$ by taken a random linear transformation of full rank from $\mathbb{F}_p^{m_{i}}$ to $A$ and consider the image of $B$ by this linear transformation, and then scale the indicator function of the image of $B$ by the constant factor $p^{m_i}/|B|$ to keep the average density unchanged on $A$. We do this independently for each $A$ where $g_i$ is nonzero on $A$. We show that with high probability, for every nonzero $d \in \mathbb{F}_p^{n_{i}}$ with the first $n_{i-1}$ coordinates of $d$ equal to $0$, the density of $3$-APs with common difference $d$ is less than $\beta \alpha^3$. One can show this for each such $d$ by observing that the density of $3$-APs with common difference $d$ is just the average of the densities of $3$-APs with common difference $d$ on each of the $p^{n_{i-1}}$ copies of $\mathbb{F}_p^{m_i}$. The densities of $3$-APs with common difference $d$ in the perturbed subspaces are independent random variables that have expected value (appropriately scaled) equal to the density of $3$-APs in $B$, which is much less than the random bound for a set of this size. We can then use Hoeffding's inequality, which allows us to show that the sum of a set of independent random variables with values in $[0,1]$ is highly concentrated on its mean, to show that it is very unlikely that the density of $3$-APs with common difference $d$ is at least $\beta\alpha^3$. Since the probability is so tiny, a simple union bound allows us to get this to hold simultaneously for all nonzero $d$. This completes the construction idea. 

To compare with the construction in \cite{FPI}, there, we perturb only a sufficient fraction of the affine subspaces. Here, to account for the large decrease in $3$-AP density we need to make in each step (as $\epsilon$ is large), we need to perturb all subspaces, and additionally use a perturbation that has significantly smaller $3$-AP density. In the next subsection we present the construction of a set that has significantly few three-term arithmetic progressions, which serves as a main ingredient in our construction. 


\vspace{0.2cm}

\subsection{Subsets with few arithmetic progressions} 
\label{relativelyfewsubs}
\vspace{0.2cm}

An important ingredient in our constructions is subsets of $\mathbb{F}_p^n$ with significantly few arithmetic progressions. For this purpose, we will use a set with no nontrivial $3$-AP. We let $A_{p,n}$ be a subset of
$\mathbb{F}_{p}^{n}$ of maximum size with no nontrivial $3$-AP. Let $r(p,n)=|A_{p,n}|$. Alon, Shpilka, and Umans \cite{ASU} gave a construction of a subset of 
$(\mathbb{Z}/p\mathbb{Z})^n$ of cardinality $\lceil p/2 \rceil^{n-o(n)}$ with no $3$-AP. More recently, Alon \cite{Alon} observed that a variant of the Behrend construction gives an even better bound of the same form. We present Alon's construction in the proof of the following lemma. 
\begin{lem}[Alon \cite{Alon}]\label{Alonlem}
We have $r(p,n)\ge \left(\frac{p+1}{2}\right)^{n}/\left(1+n\left(\frac{p-1}{2}\right)^{2}\right)$.\end{lem}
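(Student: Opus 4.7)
The plan is to implement a Behrend-type construction inside $\mathbb{F}_p^n$ itself, avoiding any embedding into the integers beyond the use of the representatives $\{0,1,\ldots,p-1\}$ for $\mathbb{F}_p$. First I would restrict attention to the sub-cube $C=\{0,1,\ldots,(p-1)/2\}^n\subset\mathbb{F}_p^n$, which has size $((p+1)/2)^n$. This is the analogue of taking integers in a fixed window, and $(p-1)/2$ is the largest value that avoids any wraparound in the key computation below.

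Next I would assign to each $x\in C$ the integer $S(x)=\sum_{i=1}^n x_i^2\in\{0,1,\ldots,n((p-1)/2)^2\}$ (computed in $\mathbb{Z}$, not $\mathbb{F}_p$). There are at most $1+n((p-1)/2)^2$ possible values of $S$, so by pigeonhole some level set $A=\{x\in C:S(x)=s\}$ has cardinality at least $((p+1)/2)^n/(1+n((p-1)/2)^2)$. The claim is then that this $A$ contains no nontrivial $3$-AP in $\mathbb{F}_p^n$, so that $r(p,n)\geq |A|$.

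The heart of the argument, and the step that uses the precise cube size, is the no-wraparound observation. Suppose $a,b,c\in A$ satisfy $a+c\equiv 2b\pmod{p}$ coordinatewise. Since each $a_i,c_i\in\{0,\ldots,(p-1)/2\}$, their integer sum $a_i+c_i$ lies in $\{0,\ldots,p-1\}$, and $2b_i$ lies in $\{0,2,\ldots,p-1\}$ as well. Two integers in the same window $[0,p-1]$ that are congruent mod $p$ are equal, so $a+c=2b$ holds in $\mathbb{Z}^n$. This is the only place the construction is delicate: taking a cube even slightly larger would allow $a_i+c_i\geq p$ and permit genuine wraparound, breaking the argument.

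Finally I would close with strict convexity of the Euclidean sphere. Since $\|a\|_2^2=\|b\|_2^2=\|c\|_2^2=s$ and $b=(a+c)/2$, expanding $\|a-c\|_2^2=2\|a\|_2^2+2\|c\|_2^2-\|a+c\|_2^2=4s-\|2b\|_2^2=0$, so $a=c$, and therefore $a=b=c$. Thus every $3$-AP in $A$ is trivial, which yields the stated lower bound on $r(p,n)$. I do not expect any serious obstacle beyond correctly executing the no-wraparound step; the rest is pigeonhole and the standard Behrend sphere trick adapted to the discrete cube.
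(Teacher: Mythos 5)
Your proof is correct and follows essentially the same route as the paper's: the same cube $\{0,\ldots,(p-1)/2\}^n$, the same pigeonhole on the sum-of-squares level sets, and the same no-wraparound observation to lift a $3$-AP from $\mathbb{F}_p^n$ to $\mathbb{Z}^n$. The only cosmetic difference is that you finish by an explicit parallelogram-law computation, whereas the paper invokes the geometric fact that a line meets a sphere in at most two points; these are the same argument in different clothing.
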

\begin{proof}
Consider the subset of $\mathbb{F}_{p}^{n}$ of all points with coordinates
in $\{0,1,...,\frac{p-1}{2}\}$.

For each point $z\in\{0,1,...,\frac{p-1}{2}\}^{n}$, let $C(z)=\sum_{i=1}^{n}z_{i}^{2}$. For all $z\in\{0,1,...,\frac{p-1}{2}\}^{n}$, we have $C(z)\in\left[0,n\left(\frac{p-1}{2}\right)^{2}\right]$. Hence, there exists a value $c$ such that $|\{z|C(z)=c\}|\ge\frac{\left(\frac{p+1}{2}\right)^{n}}{1+n\left(\frac{p-1}{2}\right)^{2}}$. Let $A=\{z|C(z)=c\}$. When viewed as a subset of $\mathbb{R}^d$, set $A$ is a subset of the sphere centered at the origin and with radius $\sqrt{c}$. In $\mathbb{R}^d$, any line intersects the boundary of a convex set (such as a sphere) in at most two points and hence cannot contain a $3$-AP. So $A$ has no $3$-AP when viewed as a subset of $\mathbb{R}^d$. As the coordinates of the points in $A$ have value between $0$ and $\frac{p-1}{2}$, there is no wrap-around when adding two elements of $A$, and it follows that any $3$-AP in $A$ must also be a $3$-AP in $\mathbb{R}^d$. Hence, $A \subset \mathbb{F}_p^n$ has no $3$-AP and has the desired size.  
\end{proof}

The bound in Lemma \ref{Alonlem} for $p \geq 19$ and $n \geq 10^6$ gives the bounds in the following corollary. 

\begin{cor}
\label{lem:free-set-large-n} If $p \ge 19$ is prime and $n \ge 10^6$, then $r(p,n)\ge p^{.782n}$ and $r(p,n) \geq ((p+1)/2.0001)^{n-2}$.\end{cor}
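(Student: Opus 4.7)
The plan is to deduce both inequalities directly from the bound
\[
r(p,n) \geq \left(\frac{p+1}{2}\right)^{n}\Big/\left(1+n\left(\frac{p-1}{2}\right)^{2}\right)
\]
supplied by Lemma \ref{Alonlem}, by verifying in each case that the denominator (polynomial in $n$ and $p$) is dominated by an appropriate exponential factor, using the hypotheses $p \ge 19$ and $n \ge 10^6$. The two inequalities are really just two numerical checks; the entire content is in how little slack we have for the first one.

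For the second inequality $r(p,n)\ge ((p+1)/2.0001)^{n-2}$, I would rearrange the required estimate to
\[
\left(\frac{p+1}{2}\right)^{2}\cdot(1.00005)^{n-2}\;\ge\;1+n\left(\frac{p-1}{2}\right)^{2}.
\]
Since $(1.00005)^{n-2} \ge e^{(n-2)\ln(1.00005)} \ge e^{49}$ once $n\ge 10^6$, while the right side is at most $np^2/4$, the inequality is very comfortably satisfied for every $p\ge 19$ and every $n\ge 10^6$; a single monotonicity check in $n$ (derivative comparison) extends from $n=10^6$ to all larger $n$.

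The only real subtlety is the first inequality $r(p,n)\ge p^{0.782n}$. Rewriting what must be shown as
\[
\left(\frac{p+1}{2\,p^{0.782}}\right)^{n}\;\ge\;1+n\left(\frac{p-1}{2}\right)^{2},
\]
one notices that the base is $p^{\rho(p)-0.782}$ where $\rho(p)=\log_{p}((p+1)/2)$. The main step is to show that $\rho$ is increasing on $p\ge 19$ (computing $\rho'(p)=[\ln p/(p+1)-\ln((p+1)/2)/p]/(\ln p)^{2}$ and checking its positivity directly at $p=19$ and asymptotically), so that $\rho(p)\ge \rho(19)=\log_{19}(10)=0.78203\ldots > 0.782$. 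Thus the base is at least $19^{0.00003}$ uniformly in $p\ge 19$, and raising to $n\ge 10^6$ yields a factor at least $19^{30}>e^{88}$, which dominates the polynomial right-hand side for any admissible $p$.

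The main obstacle, and the reason the constants $p\ge 19$ and $n\ge 10^6$ are chosen the way they are, is the extreme numerical tightness of the first inequality when $p=19$: the exponent $0.782$ sits just barely below $\log_{19}(10)$, giving a base only about $1.0001$, so one truly needs $n$ on the order of $10^6$ before the exponential edges past the quadratic-in-$p$ term. Everything else in the proof is routine calculus/monotonicity.
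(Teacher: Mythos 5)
Your overall approach — derive both inequalities from Lemma~\ref{Alonlem} and then do a numerical/monotonicity check on the polynomial denominator — is indeed the intended one; the paper offers no further detail than ``the bound in Lemma~\ref{Alonlem}\dots gives the bounds in the following corollary.'' Your treatment of the second bound is sound: you correctly retain the factor $((p+1)/2)^{2}$ on the left to cancel the quadratic-in-$p$ factor on the right, reduce to verifying $(1.00005)^{n-2}\ge n$ (roughly), and close with a monotonicity check in $n$.

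The argument for the first bound, however, has a gap as written. After observing (correctly) that $\rho(p)=\log_p\!\big((p+1)/2\big)$ is increasing for $p\ge 19$ so that $\rho(p)-0.782\ge \rho(19)-0.782=:c_0>0$ (a small numerical slip: $\rho(19)\approx 0.78201$, so $c_0\approx 1.1\times10^{-5}$, not $3\times10^{-5}$), you conclude that the left side is at least a fixed constant $19^{c_0\cdot 10^6}>e^{88}$ ``which dominates the polynomial right-hand side for any admissible $p$.'' But the right-hand side $1+n\big((p-1)/2\big)^{2}$ is \emph{unbounded} in $p$ (and in $n$), so no fixed constant dominates it uniformly; for $p$ beyond about $10^{20}$ the right-hand side already exceeds $e^{88}$. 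The fix is to keep the $p$-dependence on the left: since $\rho(p)-0.782\ge c_0$ uniformly, the left side is at least $p^{c_0 n}$, and what must be shown is $p^{c_0 n}\ge 1+n((p-1)/2)^{2}$, equivalently (after taking logarithms and discarding the additive $1$) that $(c_0 n -2)\ln p \ge \ln(n/4)$. For $p\ge19$ and $n\ge10^6$ the left side is at least $(c_0\cdot 10^6-2)\ln 19\approx 26$, exceeding $\ln(10^6/4)\approx12.4$, and the left side grows linearly in $n$ while the right grows only logarithmically, so the inequality persists for larger $n$; it also improves as $p$ grows. This is the same idea as your monotonicity check for the second bound, and you should apply it here too rather than appealing to a fixed exponential constant.
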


\subsection{Proof of Theorem \ref{thm:epsilon_large}}

\subsubsection{The Construction}

{\bf Choice of constants.} Let $s=\left\lfloor \frac{1}{11}(\log p)\ln \left(\frac{\log (1/\alpha)}{\log (1/\beta)}\right)-4\log p\right\rfloor$. Observe that $s \geq \left\lfloor 8\log p \right\rfloor > 33$ as $\beta \geq \alpha^{e^{-132}}$ and $p \geq 19$. In particular, $s \geq  \frac{1}{20}(\log p)\ln \left(\frac{\log (1/\alpha)}{\log (1/\beta)}\right)+3$. 

 We soon recursively define sequences of positive integers $m_{1},...,m_{s}$ and $r_1,\ldots,r_s$. We let $n=\sum_{i=1}^{s}m_{i}$, and $n_{j}=\sum_{i=1}^{j}m_{i}$ and $u_j=\sum_{i=1}^j r_i$ be the $j$th partial sums. Let $N_j=p^{n_j}$, $R_j=p^{r_j}$, and $U_j=p^{u_j}$, so in these cases we use capital letters to denote $p$ raised to the lower case power. We recall that $A_{p,r}$ is a maximum subset of $\mathbb{F}_p^r$ that contains no $3$-AP. Let $\mu_j=\prod_{i=1}^j \left(|A_{p,r_i}|/R_i\right)$. 

Let $m_{1}=\left\lfloor 10000\log_{p}(1/\beta)\right\rfloor$, so $p^{-(1+m_1)/10000} < \beta \leq p^{-m_{1}/10000}$. Note that as $\beta\le p^{-100}$, we have $m_{1}\ge 10^6$. Let $\gamma = 3/\log (p/8)$. 
Let $r_{1}=m_{1}$, $r_{2}=.7864r_{1}$, $r_3=6r_1$, and $r_i = (1+\gamma)^{i-3}r_3$ for $i \geq 4$. 
For $i\ge 2$, let $m_{i} = N_{i-1} \left(|A_{p,r_{i}}|/R_{i}\right)^2 \mu_{i-1}^{3} \beta$. 

The constants above were carefully chosen so as to work in the case $p=19$. For larger values of $p$, there is more flexibility in the choices of the constants. We collect several useful bounds between the constants in Appendix \ref{appenbounds}.

\vspace{0.1cm}
 
We divide the construction process into levels in order, starting
with level $1$ and ending at level $s$.
\vspace{0.1cm}
 
\noindent \textbf{Construction for level 1. }Let $f_1:\mathbb{F}_p^{n_1} \rightarrow [0,1]$ be defined by $f_{1}(a)= \alpha/\mu_1$ for $a \in A_{p,n_1}$ and $f_1(a)=0$ otherwise. The density of $f_1$ is $\alpha$. 

\vspace{0.1cm}

\noindent \textbf{Construction for level $i$ for $2\le i\le s$.} At level
$i$, we have already constructed a function $f_{i-1}:\mathbb{F}_{p}^{n_{i-1}}\to[0,1]$ at the previous level such that the density of 3-APs with common difference $d$ of $f_{i-1}$ is less than $\beta \alpha^3$ for each nonzero $d$ in $\mathbb{F}_{p}^{n_{i-1}}$. 


For each $x\in \mathbb{F}_p^{n_{i-1}}$, choose vectors $v_{1}(x),...,v_{r_{i}}(x)\in\mathbb{F}_{p}^{m_{i}}$
such that for any distinct $a,b,c \in \mathbb{F}_p^{n_{i-1}}$, the collection $\bigcup_{j=1}^{r_i} \{v_{j}(a),v_{j}(b),v_{j}(c)\}$ of 
$3r_i$ vectors are linearly independent, and, for each codimension one affine subspace $S$ of $\mathbb{F}_p^{m_i}$, the number of $x \in \mathbb{F}_p^{n_{i-1}}$ with $f_{i-1}(x) \not = 0$ for which $\{v_j(x): 1 \leq j \leq r_i\} \subset S$ is less than $m_i$. The existence of such a choice of vectors is guaranteed by Lemma \ref{lem:Choice of random directions-1} below.

For each $a\in\mathbb{F}_{p}^{n_{i}}$, with $x\in\mathbb{F}_{p}^{n_{i-1}}$
being the first $n_{i-1}$ coordinates of $a$ and $y\in\mathbb{F}_{p}^{m_{i}}$
the last $m_{i}$ coordinates of $a$, let $z_{a}=\left(y\cdot v_{1}(x),...,y\cdot v_{r_{i}}(x)\right)\in\mathbb{F}_{p}^{r_{i}}$.
Define $f_{i}:\mathbb{F}_{p}^{n_{i}}\to[0,1]$ such that $f_{i}(a)=\frac{R_i}{|A_{p,r_{i}}|}\cdot f_{i-1}(x)$
if $z_{a} \in A_{p,r_{i}}$ and $f_i(a)=0$ otherwise. This completes the construction for level $i$. 

When we have finished the construction for level $s$, let $f=f_{s}$.
It is clear from the construction that the density of each $f_{i}$
is $\alpha$, and there are $X_{i}:=N_i\prod_{j=1}^{i}\left(|A_{p,r_{j}}|/R_j\right)$
elements $a \in \mathbb{F}_{p}^{n_{i}}$ for which $f_{i}(a) \not = 0$. Indeed, 
at step $i$, the fraction of points in $\mathbb{F}_p^{n_i}$ where $f_i$ is nonzero is a proportion $|A_{p,r_i}| / R_i$ 
of the fraction of points in $\mathbb{F}_p^{n_{i-1}}$ where $f_{i-1}$ is nonzero. Moreover, all the nonzero values of $f_{i}$ are the same, and we denote this value by $\alpha_{i}=\alpha \prod_{j=1}^{i}\left(R_j/|A_{p,r_{j}}|\right)$.
Note that $\alpha_i=\mu_i^{-1}\alpha$ and $X_{i}=\mu_{i}N_i$.

\subsubsection{The proof}

The following lemma shows that we can choose the vectors $v_{j}(x)$
as specified in the above construction. 
\begin{lem}
\label{lem:Choice of random directions-1} For each $i\ge2$ there exists
a choice of $v_{j}(x) \in \mathbb{F}_p^{m_i}$ for $1\le j\le r_{i}$ and $x\in \mathbb{F}_p^{n_{i-1}}$ such that for any distinct $a,b,c\in\mathbb{F}_{p}^{n_{i-1}}$, the collection $\bigcup_{j=1}^{r_i} \{v_{j}(a),v_{j}(b),v_{j}(c)\}$ of $3r_i$ vectors are linearly independent, and for each codimension one affine subspace $S$ of $\mathbb{F}_p^{m_i}$,  the number of $x \in \mathbb{F}_p^{n_{i-1}}$ with $f_{i-1}(x) \not = 0$ for which $\{v_j(x): 1 \leq j \leq r_i\} \subset S$ is less than $m_i$. 
\end{lem}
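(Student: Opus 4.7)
The natural approach is a probabilistic existence argument. The plan is to sample each vector $v_j(x)$ independently and uniformly from $\mathbb{F}_p^{m_i}$ for $1\le j\le r_i$ and $x\in\mathbb{F}_p^{n_{i-1}}$, and to show that both desired conditions hold simultaneously with positive probability by applying two union bounds.

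For the first (linear independence) condition, I would fix an ordered triple $(a,b,c)$ of distinct elements of $\mathbb{F}_p^{n_{i-1}}$, reveal the $3r_i$ vectors $\bigcup_{j=1}^{r_i}\{v_j(a),v_j(b),v_j(c)\}$ one at a time, and bound the probability that the $k$-th revealed vector lies in the span of the previous $k-1$ by $p^{k-1-m_i}$. Summing over $k\le 3r_i$ and then union-bounding over the at most $p^{3n_{i-1}}$ triples gives a failure probability of at most $2p^{3n_{i-1}+3r_i-m_i}$. This is easily less than $1/2$ because $m_i = N_{i-1}(|A_{p,r_i}|/R_i)^2\mu_{i-1}^{3}\beta$ is astronomically larger than $n_{i-1}$ and $r_i$.

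For the second (affine hyperplane) condition, I would fix a codimension-one affine subspace $S\subset\mathbb{F}_p^{m_i}$ and a size-$m_i$ subset $T$ of $\{x:f_{i-1}(x)\ne 0\}$; the number of such $S$ is at most $p^{m_i+1}$ and the number of such $T$ is at most $X_{i-1}^{m_i}=(\mu_{i-1}N_{i-1})^{m_i}$. Given $(S,T)$, the $r_im_i$ vectors $v_j(x)$ for $x\in T$ are mutually independent and each lies in $S$ with probability exactly $1/p$, so their joint event has probability $p^{-r_im_i}$. A union bound therefore bounds the failure probability of the second condition by
\[
p^{m_i+1}\cdot(\mu_{i-1}N_{i-1})^{m_i}\cdot p^{-r_im_i}.
\]

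The main obstacle lies in the parameter verification for the second bound, which after taking logarithms amounts to the inequality $r_i>1+n_{i-1}+\log_p\mu_{i-1}$ (up to a negligible $1/m_i$ term). Since $\mu_{i-1}$ decays geometrically in $u_{i-1}=r_1+\cdots+r_{i-1}$ by the upper bound on $|A_{p,r_j}|$ implicit in Corollary \ref{lem:free-set-large-n}, the specific constants $r_1=m_1$, $r_2=.7864\,r_1$, $r_3=6r_1$, and the geometric rate $\gamma=3/\log(p/8)$ are tuned precisely so that this inequality holds with a margin, even in the critical case $p=19$. Once both union bound estimates are shown to be strictly less than $1/2$, a vector configuration satisfying both conditions exists.
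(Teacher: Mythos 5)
Your structure mirrors the paper's: independent uniform sampling, one union bound for linear independence, one for the hyperplane condition, and $1/2 + 1/2 < 1$ (the paper uses $1/3 + 1/3 < 1$). The linear-independence half is essentially identical to the paper's and is fine. However, the hyperplane half contains a genuine gap that breaks the argument.

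You bound the number of size-$m_i$ subsets $T$ of $\{x : f_{i-1}(x)\ne 0\}$ by $X_{i-1}^{m_i}$. The paper instead uses $\binom{X_{i-1}}{m_i} \le (X_{i-1}e/m_i)^{m_i}$, and the extra factor $(e/m_i)^{m_i}$ is not a refinement but the entire point of the step. Taking $\log_p$ of your union bound and dividing by $m_i$, your failure probability is less than $1/2$ only if
\[
r_i \; > \; 1 + n_{i-1} + \log_p \mu_{i-1},
\]
as you note. But this inequality is false from $i=3$ onward: $r_i$ grows only geometrically (base $1+\gamma$ starting from $r_3 = 6m_1$), $-\log_p\mu_{i-1}$ is at most roughly $0.218\,u_{i-1}$ which also grows only geometrically, while $n_{i-1} \ge m_{i-1}$ grows as a tower. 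Concretely, for $i=3$ and, say, $m_1 = 10^6$ and $p=19$, one has $n_2 \ge m_2 \ge p^{m_1/400} \approx 10^{3200}$, while $r_3 = 6\cdot 10^6$ and $-\log_p\mu_2 \lesssim 4\cdot 10^5$, so the required inequality fails by thousands of orders of magnitude. No tuning of $r_1, r_2, r_3, \gamma$ can fix this, because the mismatch is tower versus geometric. The paper's sharper binomial estimate replaces your condition with the much weaker $m_i R_i/(\mu_{i-1}N_{i-1}) \ge e p^5$, which is exactly inequality (\ref{sim6}); note that the factor $m_i$ in the numerator there comes from the $1/m_i$ inside $(X_{i-1}e/m_i)^{m_i}$, and it is precisely what absorbs the tower-sized $n_{i-1}$.

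A secondary issue: you attribute the decay of $\mu_{i-1}$ to ``the upper bound on $|A_{p,r_j}|$ implicit in Corollary \ref{lem:free-set-large-n}.'' That corollary gives only a \emph{lower} bound on $|A_{p,r}|$, hence a lower bound on $\mu_{i-1}$. The paper's verification in (\ref{sim6}) indeed needs $\mu_{i-1}$ (equivalently $\mu_i$) to be not too \emph{small}, since $m_i$ is defined proportional to $\mu_{i-1}^3$; so the direction of the needed bound is opposite to what you describe, and the quantity you would want for your version of the argument --- an upper bound on $|A_{p,r}|$ --- is not supplied by the corollary and is never invoked in the paper.
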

\begin{proof}
For $1 \leq j \leq r_i$ and $x \in \mathbb{F}_p^{n_{i-1}}$, choose a random vector 
$v_{j}(x)\in \mathbb{F}_p^{m_i}$  uniformly so that the choices of $v_{j}(x)$ are independent. Consider distinct $a,b,c \in \mathbb{F}_p^{n_{i-1}}$. Label the $3r_i$ vectors in $\bigcup_{j=1}^{r_i} \{v_{j}(a),v_{j}(b),v_{j}(c)\}$ 
as $v_1,v_2,\ldots,v_{3r_i}$. 
The probability that $v_1$ is nonzero is $1-p^{-m_i}$. For $2 \leq j \leq 3r_i$, the probability that $v_{j}$ 
is not in the span of $\{v_{1},...,v_{j-1}\}$ given that $\{v_{1},...,v_{j-1}\}$
are linearly independent is $1-p^{-m_{i}+j-1}$. Hence, the
probability that $\bigcup_{j=1}^{r_i} \{v_{j}(a),v_{j}(b),v_{j}(c)\}$ are linearly independent
is 
\begin{eqnarray*}\prod_{k=0}^{3r_{i}-1}\left(1-p^{-m_{i}+k}\right) & \ge & 1-\sum_{k=0}^{3r_i - 1} p^{-m_{i}+k} \ge 1 - \frac{p}{p-1}p^{-m_i+3r_i-1} \ge 1- p^{-m_i+3r_i},  \end{eqnarray*}
where in the first inequality we repeatedly used the inequality $(1-y)(1-z)>1-(y+z)$ for $y,z$ nonnegative, and in the second inequality we bounded a finite geometric series by the infinite one. Thus, by the union bound, the probability that for some distinct $a,b,c \in \mathbb{F}_p^{n_{i-1}}$, the vectors in the set $\bigcup_{j=1}^{r_i} \{v_{j}(a),v_{j}(b),v_{j}(c)\}$ are not linearly independent is at most 
\begin{equation}
N_{i-1}^3p^{-m_i+3r_i} = p^{3n_{i-1}+3r_i - m_i} \leq p^{-m_i/4} \leq p^{-1} \leq 1/3,\label{eq:linear-independence}
\end{equation}
where in the first inequality we used $m_i \geq 8n_{i-1}$ and $m_i \geq 8r_i$, which follow from (\ref{sim1}) and (\ref{sim2}).

Let $S$ be a codimension one affine subspace of $\mathbb{F}_p^{m_i}$. For each $x \in \mathbb{F}_p^{n_{i-1}}$, the probability that the set $\{v_j(x): 1 \leq j \leq r_i\}$ is a subset of $S$ is $p^{-r_i}$. By the union bound, the probability that the number of $x \in \mathbb{F}_p^{n_{i-1}}$ with $f_{i-1}(x) \not =0$ for which $\{v_j(x): 1 \leq j \leq r_i\} \subset S$ is at least $m_i$ is at most 
${X_{i-1} \choose m_i}p^{-r_im_i}$. The number of codimension one affine subspaces of $\mathbb{F}_p^{m_i}$ is $p\frac{p^{m_i}-1}{p-1}<2p^{m_i}$. 
Thus, by the union bound, the probability that there is a codimension one affine subspace $S$ of $\mathbb{F}_p^{m_i}$ for which at least $m_i$ of the $x \in \mathbb{F}_p^{n_{i-1}}$ with $f_{i-1}(x) \not =0$ satisfy $\{v_j(x): 1 \leq j \leq r_i\} \subset S$ is at most 
\begin{equation}\label{lastestimatefor} 2p^{m_i}{X_{i-1} \choose m_i}p^{-r_im_i} \leq 2p^{m_i}\left(\frac{X_{i-1}e}{m_i}\right)^{m_i} p^{-r_im_i} = 2p^{m_i}
\left(\frac{\mu_{i-1}N_{i-1}e}{m_iR_i}\right)^{m_i}<2p^{m_i-5m_i}<1/3,\end{equation}
where the second to last inequality is by (\ref{sim6}). 

As $1/3+1/3 < 1$, with positive probability (and hence there exists an instance such that) for all distinct  $a,b,c \in \mathbb{F}_p^{n_{i-1}}$, the vectors in the set $\bigcup_{j=1}^{r_i}\{v_{j}(a),v_{j}(b),v_{j}(c)\}$ are linearly independent, and no codimension one affine subspace $S$ of $\mathbb{F}_p^{m_i}$ is such that $m_i$ of the $x \in \mathbb{F}_p^{n_{i-1}}$ with $f_{i-1}(x) \not =0$ satisfy $\{v_j(x): 1 \leq j \leq r_i\} \subset S$. This completes the proof of the lemma. 
\end{proof}

We now prove Theorem \ref{thm:epsilon_large}. 
\begin{proof}[Proof of Theorem \ref{thm:epsilon_large}.]

We need to prove that for each $i$, $1 \leq i \leq s$, the following holds. For every nonzero $d \in \mathbb{F}_p^{n_i}$, the density of $3$-APs with common difference $d$ of $f_i$ is less than $\beta \alpha^3$. We will prove this by induction on $i$. 

The base case $i=1$ follows from the fact that the set where $f_1$ is nonzero has no nontrivial $3$-AP. Assume that for $i=k$ and every nonzero $d \in \mathbb{F}_p^{n_i}$, the density of $3$-APs with common difference 
$d$ of $f_i$ is less than $\beta \alpha^3$. We next show this for level $i=k+1$. 

Let $\rho_{j}(d)$ be the density of $3$-APs with common difference $d$ of $f_j$.

If a nonzero $d \in \mathbb{F}_p^{n_{k+1}}$ is $0$ in the first $n_{k}$
coordinates, let the last $m_{k+1}$ coordinates
of $d$ be $d'$, which must be nonzero. If $d'$ is perpendicular
to all $v_{j}(x),1\le j\le r_{i}$, for some $x$ where $f_{k}(x)\ne0$,
then the density of $3$-APs with common difference $d$ inside the affine subspace of points where the first $n_{k}$
coordinates are equal to $x$ is $\left(|A_{p,r_{k+1}}|/R_{k+1}\right)\alpha_{k+1}^3=\left(R_{k+1}/|A_{p,r_{k+1}}|\right)^2 \alpha_k^3$;
otherwise this density is $0$, since $A_{p,r_{k+1}}$ is $3$-AP free. 

Let $t$ be the number of $x$ in $\mathbb{F}_{p}^{n_{k}}$ with $f_{k}(x)\ne0$
such that $d'\cdot v_{j}(x)=0$ for all $1\le j\le r_{i}$. The density
of $3$-APs with common difference $d$ is
\[
\rho_{k+1}(d)=\frac{t}{N_k}\cdot\left(R_{k+1}/|A_{p,r_{k+1}}|\right)^2 \alpha_k^3=\frac{t}{N_k}\cdot\left(R_{k+1}/|A_{p,r_{k+1}}|\right)^2 \cdot \mu_k^{-3} \alpha^3,
\]
where the second equality is by $\alpha_{k}=\mu_k^{-1}\alpha$. Since $d'\ne0$, we have $t < m_{k+1}$, as the construction requires that the number of $x\in \mathbb{F}_{p}^{n_{k}}$ with $f_k(x)\ne 0$ such that $v_{j}(x),1\le j\le r_{k+1}$ are contained in the orthogonal complement of $d'$ (which has codimension one) is less than $m_{k+1}$, which is guaranteed by Lemma \ref{lem:Choice of random directions-1}. Hence, as $m_{k+1}=N_{k} \left(|A_{p,r_{k+1}}|/R_{k+1}\right)^2\cdot\mu_k^3 \cdot \beta$, we have $\rho_{k+1}(d) < \beta\alpha^{3}$.

If $d \in \mathbb{F}_p^{n_{k+1}}$ is nonzero in the first $n_{k}$
coordinates, letting $d^*$ denote the first $n_k$ coordinates of $d$, by Lemma \ref{lem:Stability-2} below, we have $\rho_{k+1}(d)=\rho_{k}(d^*)<\beta\alpha^{3}$. 
We have thus proved by induction that for each nonzero $d \in \mathbb{F}_p^{n_i}$, the density of $3$-APs with common difference $d$ of $f_i$ is less than $\beta \alpha^3$. 

We chose $s$ to ensure that the functions $f_{i}$ take values $\alpha_i\in [0,1]$. Indeed, 
\begin{eqnarray*} 0 & < & \alpha \leq \alpha_i \leq \alpha_{s}=\mu_s^{-1}\alpha \leq p^{2s}2.0001^{u_s}\alpha \leq  p^{2s}2.0001^{(1+\gamma^{-1})r_s}\alpha = p^{2s}2.0001^{(\log p)r_s/3}\alpha \\ & \leq & p^{r_s/2} \alpha = p^{(1+\gamma)^{s-3} 3m_1} \alpha \leq p^{e^{\gamma s} 3m_1} \alpha\leq  \beta^{-3 \cdot 10^4 \cdot e^{\gamma s}} \alpha < 1, 
\end{eqnarray*}
where we used Corollary \ref{lem:free-set-large-n} to get $|A_{p,r_i}| \geq p^{-2}2.0001^{-r_i}|R_i|$ and hence the fourth inequality, Inequality (\ref{similar}) to get the fifth inequality, the next equality is by the choice of $\gamma=3/\log(p/8)$, the next inequality is by Inequality (\ref{rilowerbound}), the next equality is by the choice of $r_s$, the second to last inequality is by $m_1 \leq 10000\log_p (1/\beta)$, and the last inequality is by the choice of $\gamma$ and $s$ so that $$\gamma s \leq \frac{3}{\log(p/8)} \cdot \left(\frac{1}{11}(\log p)\ln \left( \frac{\log(1/\alpha)}{\log(1/\beta)}\right) -4\log p\right)< \ln \left(\frac{\log(1/\alpha)}{\log(1/\beta)}\right)-12$$
and hence 
$e^{\gamma s} < e^{-12}\left(\frac{\log(1/\alpha)}{\log(1/\beta)}\right)<10^{-5}\left(\frac{\log(1/\alpha)}{\log(1/\beta)}\right)$. 

Now, we estimate the dimension $n=n_s$ of our final space. By the definition, we have $n_s \geq m_s$. By (\ref{sim4}), we have $m_s$ is at least a tower of $p$'s of height $s-3$ with a $m_3$ on top.

In (\ref{sim23}), we observed that $m_2 \geq p^{m_1/400} \geq p^{25\log_p(1/\beta) -1} \geq 1/\beta$. By (\ref{sim1}), we have $m_3>m_2>1/\beta$. Hence, 
the dimension $n$ of our final space is at least a tower of $p$'s of height at least $s-3 > \frac{1}{20}(\log p)\ln\left(\frac{\log (1/\alpha)}{\log (1/\beta)}\right)$ with a $1/\beta$ on top. 
\end{proof}

We now provide the proof of Lemma \ref{lem:Stability-2}, which is very similar to the proof of Lemma 12 in \cite{FPI} but requires some modifications. We include it here for completeness. 
\begin{lem}
\label{lem:Stability-2}
Let $2 \leq i \leq s$. For $d \in \mathbb{F}_p^{n_i}$, let $d^{*} \in \mathbb{F}_p^{n_{i-1}}$ be the first $n_{i-1}$ coordinates of $d$. If $d^{*}$ is nonzero,  then the density of $3$-APs with common difference $d$ of $f_i$ is the same as the density of $3$-APs with common difference $d^{*}$ of $f_{i-1}$. That is, if $d^{*}$ is nonzero, then $\rho_{i}(d)=\rho_{i-1}(d^{*})$.
\end{lem}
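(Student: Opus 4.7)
The plan is to unfold $\rho_i(d)$ directly using the construction of $f_i$ and to show that the randomness introduced by the direction vectors $v_j$ averages out exactly, leaving the $3$-AP density at the previous level. I will write each $a\in\mathbb{F}_p^{n_i}$ as $(x,y)$ with $x\in\mathbb{F}_p^{n_{i-1}}$ the first $n_{i-1}$ coordinates and $y\in\mathbb{F}_p^{m_i}$ the last $m_i$ coordinates, and similarly decompose $d=(d^*,d')$. Then $\rho_i(d)=\mathbb{E}_x\mathbb{E}_y[f_i(x,y)\,f_i(x+d^*,y+d')\,f_i(x+2d^*,y+2d')]$, and after substituting the defining formula for $f_i$, the integrand factors into the constant $(R_i/|A_{p,r_i}|)^3$, the three values $f_{i-1}(x+kd^*)$ for $k=0,1,2$ (which depend only on $x$), and a product of three indicators $\mathbf{1}[z_{a+kd}\in A_{p,r_i}]$.

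The key step is to show that for each fixed $x$, the inner expectation of the three indicators over $y$ equals exactly $(|A_{p,r_i}|/R_i)^3$, so that the prefactor cancels. Because $p\ge 19$ and $d^*\ne 0$, the three elements $x,x+d^*,x+2d^*$ are distinct, so Lemma \ref{lem:Choice of random directions-1} guarantees that the $3r_i$ vectors $\bigcup_{j=1}^{r_i}\{v_j(x),v_j(x+d^*),v_j(x+2d^*)\}$ are linearly independent. I will then observe that the map $\Phi:y\mapsto(z_{(x,y)},z_{(x+d^*,y+d')},z_{(x+2d^*,y+2d')})$ from $\mathbb{F}_p^{m_i}$ to $\mathbb{F}_p^{3r_i}$ is affine, with linear part of rank $3r_i$ by that linear independence; using $m_i\ge 3r_i$ from inequality (\ref{sim2}), $\Phi$ is surjective with all fibres of equal size $p^{m_i-3r_i}$. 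Consequently the triple $(z_a,z_{a+d},z_{a+2d})$ is uniform in $\mathbb{F}_p^{3r_i}$ as $y$ ranges uniformly, and the expectation of the product of the three indicators is $(|A_{p,r_i}|/R_i)^3$.

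Combining the two steps, the scaling constants cancel, leaving $\rho_i(d)=\mathbb{E}_x[f_{i-1}(x)f_{i-1}(x+d^*)f_{i-1}(x+2d^*)]=\rho_{i-1}(d^*)$. The only step requiring genuine argument is the equidistribution of the $z$-triple under $\Phi$, but this is an immediate consequence of the linear-independence property that Lemma \ref{lem:Choice of random directions-1} was purpose-built to supply, so no substantial obstacle is anticipated; the lemma is in effect a bookkeeping identity whose entire content is encoded in the choice of the vectors $v_j$.
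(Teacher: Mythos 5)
Your proof is correct and takes essentially the same route as the paper: both arguments reduce the claim to the fact that for fixed $x$, the triple of $z$-values is equidistributed over $\mathbb{F}_p^{3r_i}$ because the $3r_i$ vectors $v_j(x), v_j(x+d^*), v_j(x+2d^*)$ are linearly independent (Lemma \ref{lem:Choice of random directions-1}), so the affine map $\Phi$ has equal-sized fibres. The paper packages the final algebraic step by writing $f_i(a) = f_{i-1}(a^*) + i_{a_0}(\cdot)$ with a zero-mean correction and factoring the triple sum, whereas you directly compute the expectation of the product of indicators as $(|A_{p,r_i}|/R_i)^3$ and cancel the scaling prefactor; your version is slightly cleaner but not a different argument.
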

\begin{proof}
Since $d^*$ is nonzero, for any $3$-AP $a,b,c$ with common difference $d$, the restrictions of
$a,b,c$ to the first
$n_{i-1}$ coordinates are distinct. Let $a^{*}$ be the first
$n_{i-1}$ coordinates of $a$. Similarly define
$b^{*},c^{*},d^{*}$. Fix $a^{*}=a_{0},b^{*}=b_{0},c^{*}=c_{0}$ for any $3$-AP $(a_0,b_0,c_0)$ in $\mathbb{F}_p^{n_{i-1}}$ with common difference $d^{*}$, and consider all $3$-APs $a,b,c$ with common difference $d$ such that the first
$n_{i-1}$ coordinates of $a,b,c$ coincide
with $a_{0},b_{0},c_{0}$. Since $a_{0},b_{0},c_{0}$ are distinct, the $3r_i$ vectors 
$v_{j}(a_{0})$, $v_{j}(b_{0})$, $v_{j}(c_{0})$ with $1 \leq j \leq r_i$ are linearly independent.
Hence, we can change the basis and view $v_{j}(a_{0})$, $v_{j}(b_{0})$, $v_{j}(c_{0})$ for $1 \leq j \leq r_i$
as basis vectors of $\mathbb{F}_p^{m_i}$. Let $B$ be a basis of $\mathbb{F}_p^{m_i}$ containing $v_{j}(a_{0})$, $v_{j}(b_{0})$, $v_{j}(c_{0})$ for $1 \leq j \leq r_i$. Let $a',b',c',d'$ be the restriction of $a,b,c,d$
to the last $m_{i}$ coordinates. 

For $1\le j\le r_i$, let $a_{1j},b_{1j},c_{1j}$
be any three fixed values in $\mathbb{F}_{p}$. Let $L=p^{m_i-3r_i}$. We prove that the number of $3$-APs in $\mathbb{F}_p^{n_i}$ with common difference $d$, $a^{*}=a_{0}$
and $a'\cdot v_j(a_{0})=a_{1j}$, $b^{*}=b_{0}$ and $b'\cdot v_j(b_0)=b_{1j}$, $c^{*}=c_0$ and $c'\cdot v_j(c_0)=c_{1j}$ is $L$. Since $B$ is a basis for $\mathbb{F}_p^{m_i}$, if $x,x'\in \mathbb{F}_p^{m_i}$ satisfy $x\cdot v=x'\cdot v$ for all $v\in B$ then by linearity $x\cdot v=x'\cdot v$ for all $v\in \mathbb{F}_p^{m_i}$, in which case $x=x'$. Since there are $p^{m_i}$ elements in $\mathbb{F}_p^{m_i}$, and $p^{m_i}$ possible tuples $(x\cdot v)_{v\in B}$, each tuple must appear exactly once. The $3$-APs with common difference $d$, $a^{*}=a_{0}$
and $a'\cdot v_j(a_{0})=a_{1j}$, $b^{*}=b_{0}$ and $b'\cdot v_j(b_0)=b_{1j}$, $c^{*}=c_0$ and $c'\cdot v_j(c_0)=c_{1j}$ are given by triples $(a',a'+d',a'+2d')$ such that $a'\cdot v_j(a_{0})=a_{1j}$, $a'\cdot v_j(b_0) = b_{1j}-d'\cdot v_j(b_0)$, $a'\cdot v_j(c_0)=c_{1j}-2d'\cdot v_j(c_0)$. Hence the number of such $3$-APs is equal to the number of $a'\in \mathbb{F}_p^{m_i}$ such that $a'\cdot v_j(a_{0})=a_{1j}$, $a'\cdot v_j(b_0) = b_{1j}-d'\cdot v_j(b_0)$, $a'\cdot v_j(c_0)=c_{1j}-2d'\cdot v_j(c_0)$. There is exactly one element of $\mathbb{F}_p^{m_i}$ such that its dot product with each vector in the basis $B \supset \{v_j(a_0),v_j(b_0),v_j(c_0),1\le j\le r_i\}$ is fixed to an arbitrary value. Since there are exactly $p^{m_i-3r_i}=L$ ways to choose the value of $a'\cdot v$ for $v\in B \setminus \{v_j(a_0),v_j(b_0),v_j(c_0),1\le j\le r_i\}$, there are exactly $L$ elements $a'\in \mathbb{F}_p^{m_i}$ such that $a'\cdot v_j(b_0) = b_{1j}-d'\cdot v_j(b_0)$, $a'\cdot v_j(c_0)=c_{1j}-2d'\cdot v_j(c_0)$. Hence, the number of $3$-APs $(a,b,c)$ with common difference $d$, $a^{*}=a_{0}$
and $a'\cdot v_j(a_{0})=a_{1j}$, $b^{*}=b_{0}$ and $b'\cdot v_j(b_0)=b_{1j}$, $c^{*}=c_0$ and $c'\cdot v_j(c_0)=c_{1j}$ is exactly $L$. 



For $x \in \mathbb{F}_p^{n_{i-1}}$, by definition of $f_i$, we can define $i_{x}:\mathbb{F}_{p}^{r_{i}}\to\mathbb{R}$ such that $f_i(a)=f_{i-1}(a^*)+i_{a_0}(a'\cdot v_1(a_0),...,a'\cdot v_{r_i}(a_0))$. Moreover $\mathbb{E}_{w\in \mathbb{F}_{p}^{r_{i}}}[i_{x}(w)]=0$.
The density of $3$-APs $(a,b,c)$ with common difference $d$ of $f_i$ such that $a^*=a_{0},b^*=b_{0},c^*=c_{0}$ is 

\begin{align*}
 & \frac{1}{p^{3r_{i}}L}\sum_{a_{1j},b_{1j},c_{1j}\in\mathbb{F}_{p}}L \cdot (f_{i-1}(a_{0})+i_{a_{0}}(a_{11},...,a_{1r_{i}}))\cdot(f_{i-1}(b_{0})+i_{b_{0}}(b_{11},...,b_{1r_{i}}))\cdot(f_{i-1}(c_{0})+i_{c_{0}}(c_{11},...,c_{1r_{i}}))\\
 & =\frac{1}{p^{3r_{i}}}\sum_{a_{1j},b_{1j},c_{1j}\in\mathbb{F}_{p}}(f_{i-1}(a_{0})+i_{a_{0}}(a_{11},...,a_{1r_{i}}))\cdot(f_{i-1}(b_{0})+i_{b_{0}}(b_{11},...,b_{1r_{i}}))\cdot(f_{i-1}(c_{0})+i_{c_{0}}(c_{11},...,c_{1r_{i}}))\\
 & =\frac{1}{p^{3r_{i}}}\left(\sum_{a_{1j}\in\mathbb{F}_{p}}(f_{i-1}(a_{0})+i_{a_{0}}(a_{11},...,a_{1r_{i}}))\right)\left(\sum_{b_{1j}\in\mathbb{F}_{p}}(f_{i-1}(b_{0})+i_{b_{0}}(b_{11},...,b_{1r_{i}}))\right)\cdot\\
 & \qquad\qquad\qquad\qquad\qquad\qquad\qquad\qquad\cdot\left(\sum_{c_{1j}\in\mathbb{F}_{p}}(f_{i-1}(c_{0})+i_{c_{0}}(c_{11},...,c_{1r_{i}}))\right)\\
 & =f_{i-1}(a_{0})f_{i-1}(b_{0})f_{i-1}(c_{0}).
\end{align*}

Hence,

\begin{align*}
\rho_{i}(d) & =\mathbb{E}_{a_{0},b_{0},c_{0}}[f_{i-1}(a_{0})f_{i-1}(b_{0})f_{i-1}(c_{0})]=\rho_{i-1}(d^*),
\end{align*}
which completes the proof.
\end{proof}

\section{Popular differences in very dense sets}\label{verydensesection}

In Theorem \ref{main2}, we proved essentially tight bounds on the tower height of $n_p(\alpha,\beta)$ when $\alpha \leq 1/2$ and $p \geq 19$. We next discuss what happens when the set density $\alpha$ is close to one and prove Theorem \ref{thmverydensetight}, which gives a tight bound on the tower height in this regime. No bound on $p$ is needed. Some of this discussion works for all abelian groups of odd order, so we indicate where we are assuming the group is $\mathbb{F}_p^n$. 

Let $G$ be a finite abelian group of odd order and $f:G\rightarrow [0,1]$ have density $\alpha$. Since $\alpha$ is close to one, it will be convenient to work with the complementary function $g:G\rightarrow [0,1]$ given by $g(x)=1-f(x)$, which has density $\gamma := 1-\alpha$. The weight of a $3$-AP $(a,b,c)$ is $f(a)f(b)f(c)=(1-g(a))(1-g(b))(1-g(c)) \geq 1-g(a)-g(b)-g(c)$. As each element is in exactly three $3$-APs with a given common difference, for {\it every} nonzero $d$, the density of $3$-APs with common difference $d$ is at least $1-3\gamma$. This bound is best possible in $\mathbb{F}_3^n$ if $\gamma=1/3$ by considering the characteristic function of the subset which consists of all elements with $1$ or $2$ in its first coordinate and the common difference $d$ is nonzero in the first coordinate. However, the total density of $3$-APs is considerably larger than this bound. Indeed, the density of $3$-APs $(a,b,c)$ is 
\begin{eqnarray}\label{popdensecomp}
\mathbb{E}[f(a)f(b)f(c)] & = & \mathbb{E}[(1-g(a))(1-g(b))(1-g(c))] \nonumber \\ & = & 1-\mathbb{E}[g(a)+g(b)+g(c)]+\mathbb{E}[g(a)g(b)+g(a)g(c)+g(b)g(c)]-\mathbb{E}[g(a)g(b)g(c)] \nonumber \\ & =& 1-3\gamma+3\gamma^2-\mathbb{E}[g(a)g(b)g(c)] \nonumber \\ & \geq & 1-3\gamma+3\gamma^2-\mathbb{E}[g(a)g(b)]  = 
1-3\gamma+2\gamma^2 = \alpha^3-(\gamma^2-\gamma^3).
\end{eqnarray}

It is not difficult to see that this bound is best possible if $g$ is the indicator function for a subgroup of $G$. It follows from (\ref{popdensecomp}) by taking away the contribution from the trivial arithmetic progressions (those with $d=0$) that $n_p(\alpha,\beta) \leq n$ if 

\begin{equation}\label{popdensecomp1}
\beta \leq \alpha^3-\gamma^2+\gamma^3-\alpha/p^n <\left(\alpha^3-\gamma^2+\gamma^3-\alpha/p^n\right)\frac{p^n}{p^n-1}. \end{equation}

Recall that $\beta=\alpha^3-\epsilon$. The above discussion shows that, for $\epsilon \geq \gamma^2$, we have $n_p(\alpha,\beta) \leq 3\log_p (1/\gamma)$. We next discuss the proof of Theorem \ref{thmverydensetight}. 
We first prove the upper bound as given in the following theorem, which improves for $\alpha$ close to $1$ the tower height from the bound in Theorem \ref{thm:largebound}(1) by a factor $\Theta(\log(1/\gamma))$. Note that we may simply apply the bound in Theorem \ref{thm:largebound}(1) for $1/2 \leq \alpha < 59/60$ to get the range of $\alpha$ in Theorem \ref{thmverydensetight} not covered by the theorem below. 

\begin{thm}\label{uppverydensecase} Let  $\alpha \geq 59/60$, $\gamma=1-\alpha$ and $\epsilon=\alpha^3-\beta$. We have $$n_p(\alpha,\beta) \leq \tower(p,\log(\epsilon)/\log(36\gamma)+9,1/\epsilon).$$ 
\end{thm}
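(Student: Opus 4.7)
I would adapt the density-increment framework of Theorem \ref{firstuppthm} and Theorem \ref{thm:largebound}, iterating a chain $\mathbb{F}_p^n = H_0 \supset H_1 \supset \cdots \supset H_s$ and tracking $b(H_i) - \alpha^3$. The key change in the very dense regime $\alpha \geq 59/60$ is to replace the additive-doubling increment of Lemma \ref{lem:mean cubed density increment_small} with a multiplicative one of the form
\[
b(H_{i+1}) - \alpha^3 \;\geq\; \frac{b(H_i)-\alpha^3}{36\gamma} \;+\; \frac{\epsilon}{2},
\]
valid whenever $\lambda(H_i) < \beta$ and $|H_i|$ is not too small. Iterating from $b(H_0) - \alpha^3 = 0$ gives $b(H_s) - \alpha^3 \gtrsim \epsilon \cdot (36\gamma)^{-(s-1)}$; combined with the trivial ceiling $b(H_s) - \alpha^3 \leq \alpha - \alpha^3 \leq 3\gamma$, this forces $s \leq \log(\epsilon)/\log(36\gamma) + O(1)$, and the usual codimension blow-up $\text{Codim}(H_{i+1}) \leq \text{Codim}(H_i) + p^{\text{Codim}(H_i)} \cdot O(\epsilon^{-2})$ per step then yields the claimed tower bound $n \leq \text{Tow}(p, s+O(1), 1/\epsilon)$.

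To establish the improved increment I switch to the complementary function $g := 1-f$ (of density $\gamma = 1-\alpha \leq 1/60$) and track the coset-density variance $V(H) := \mathbb{E}[\bar g_H^2]-\gamma^2$, where $\bar g_H$ denotes the coset average of $g$ on $G/H$. Moment identities give $b(H) - \alpha^3 = 3V(H) - (\mathbb{E}[\bar g_H^3] - \gamma^3)$ with $\gamma^3 \leq \mathbb{E}[\bar g_H^3] \leq V(H) + \gamma^2$ (both from $\bar g_H \in [0,1]$), so $V(H)$ and $b(H) - \alpha^3$ agree up to constants and an $O(\gamma^2)$ error absorbed by the $\epsilon/2$ slack. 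The crucial new input is the sparsity bound $|\widehat{\bar g_H}(\chi)| \leq \mathbb{E}[|\bar g_H|] = \gamma$ for every character $\chi$, which, substituted into the Fourier identity
\[
\Lambda(\bar g_H) - \gamma^3 \;=\; \sum_{\chi \neq 0}\widehat{\bar g_H}(\chi)^2\,\widehat{\bar g_H}(-2\chi)
\]
and bounding one factor by $\gamma$, yields $\Lambda(\bar g_H) - \gamma^3 \leq \gamma V(H)$. Combined with the weak-regularity/counting step (converting $\lambda(H_i) < \beta$ into $\Lambda(\bar g_{H_{i+1}}) - \gamma^3 \geq \epsilon/2$ for a $\delta$-weakly-regular $H_{i+1} \subset H_i$ with $\delta = \epsilon/(12\alpha)$), this immediately gives the absolute gain $V(H_{i+1}) \geq \epsilon/(2\gamma)$, already a factor $1/\gamma$ improvement over what the analogue of Lemma \ref{lem:mean cubed density increment_small} would directly provide.

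The last and hardest step is upgrading this absolute gain to the multiplicative form. This uses the orthogonal level decomposition $\Lambda(\bar g_{H_{i+1}}) - \gamma^3 = \Lambda(\bar g_{H_i}) - \gamma^3 + \Lambda(r_{i+1})$ with $r_{i+1} := \bar g_{H_{i+1}} - \bar g_{H_i}$, whose Fourier support is disjoint from that of $\bar g_{H_i}$ (a short computation shows the cross terms vanish because $-2$ preserves the chain of annihilator subgroups). Applying the sparse Fourier bound at the level of $r_{i+1}$ gives $\mathbb{E}[r_{i+1}^2] \geq \Lambda(r_{i+1})/\gamma$; choosing $H_{i+1}$ so that $r_{i+1}$ absorbs both the accumulated deficit $\Lambda(\bar g_{H_i})-\gamma^3$ (which is at most $\gamma V(H_i)$ by the sparse bound applied at the previous level) and the fresh $\epsilon/2$ contribution then produces the multiplicative step, with the constant $36 = 6 \cdot 6$ arising from the factor $6$ in the counting-lemma threshold $\delta = \epsilon/(12\alpha)$ and the factor $6$ in the $V$-to-$(b-\alpha^3)$ conversion. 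The main obstacle is exactly this last upgrade: ensuring $H_{i+1}$ can be chosen to simultaneously capture the existing variance and the new hypothesis-driven deficit, which requires $\delta$ to be adaptively scaled with $V(H_i)$ and a careful alignment of the Fourier levels across successive iterations.
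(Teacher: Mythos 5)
Your central idea---replacing the paper's real-variable AM--GM step by the Fourier sparsity bound $|\widehat{g}(\chi)| \leq \gamma$ for the complement $g=1-f$---is a genuine and elegant alternative route to the key factor-$1/\gamma$ gain, and it would in fact give slightly sharper constants than the paper's $\mathbb{E}[\delta_{ja}\delta_{jb}\delta_{jc}] \geq -18\gamma\,\mathbb{E}_b[\delta_{jb}^2]$, which is proved there by the crude bound $|\delta_{ja}\delta_{jb}\delta_{jc}|\le |\delta_{ja}|\,(\delta_{jb}^2+\delta_{jc}^2)/2$ combined with $\delta_{ja}\le 6\gamma$. So the ``absolute gain'' part of your plan is sound. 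But there is a real gap in the ``multiplicative upgrade,'' and it is not the one you flag.

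First, the orthogonal decomposition should be run on the \emph{restricted} $3$-AP count $\mathbb{E}_{x\in G,\,d\in H_i}[\cdot]$, not the unrestricted $\Lambda$ over all of $G$: the hypothesis is about common differences in $H_i$. The restricted count does decompose additively across the annihilator levels (the cross terms vanish because $r_{i+1}$ has mean zero on each translate of $H_i$), but the correct identity reads $3\text{AP}_{H_i}(\bar f_{H'}) = b(H_i) + 3\text{AP}_{H_i}(\phi)$, where $\phi$ is the mean-zero deviation at the regular level; the constant term is $b(H_i)$, not $\alpha^3$. This is precisely what produces the multiplicative step: since $\lambda(H_i)<\alpha^3-\epsilon$, after regularity and counting you get $3\text{AP}_{H_i}(\phi)< -(b(H_i)-\alpha^3)-\epsilon + O(\eta)$, and the sparsity bound $|3\text{AP}_{H_i}(\phi)| \le 6\gamma\,\mathbb{E}[\|\phi\|_2^2]$ then forces $\mathbb{E}[\|\phi\|_2^2]\gtrsim \bigl((b(H_i)-\alpha^3)+\epsilon\bigr)/\gamma$, which feeds into $b(H_{i+1})-b(H_i)\gtrsim \mathbb{E}[\|\phi\|_2^2]$ to give the factor $(1+\Theta(1/\gamma))$. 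Your formulation ``$\Lambda(\bar g_{H_{i+1}})-\gamma^3\ge \epsilon/2$'' drops the $b(H_i)-\alpha^3$ term entirely, and your attempted recovery via $|\Lambda(\bar g_{H_i})-\gamma^3|\le \gamma V(H_i)$ fails: that bound goes both ways, so all you extract is $\Lambda(r_{i+1})\ge \epsilon/2 - \gamma V(H_i)$, which becomes vacuous once $\gamma V(H_i)\ge \epsilon/2$ and yields only the single-step constant bound $V(H_{i+1})\ge \epsilon/(2\gamma)$, not geometric growth. Consequently the ``adaptive scaling of $\delta$'' you propose is a misdiagnosis; a fixed $\eta=\Theta(\epsilon)$ suffices once the $b(H_i)-\alpha^3$ term is put back in.

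A second omission: your sparsity bound $|\widehat{\bar g}(\chi)|\le \gamma$ within a translate $H_{ij}$ is really $\le \gamma_{ij}$, the local density of $g$ on that translate, and this can be as large as $1$. The paper isolates the translates with $\delta_j < -5\gamma$ (equivalently $\gamma_{ij}>6\gamma$) and handles them separately via the pointwise bound on $3$-AP density in very dense sets, $\Lambda(f|_{H_j}) \ge \alpha_j^3-(1-\alpha_j)^2$, absorbing that loss into the $b(H_i)-\alpha^3$ ledger. Without some such case split the sparsity step is unjustified on those translates. Both gaps are repairable along the lines above, but as written the argument cannot reach the claimed increment $b(H_{i+1})-\alpha^3 \ge (b(H_i)-\alpha^3)/(36\gamma)+\epsilon/2$.
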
 

The only difference between the proofs of Theorem \ref{uppverydensecase} and Theorem \ref{thm:largebound}(1) is that we repeatedly apply Lemma \ref{bettermeancube} below instead of Lemma \ref{lem:mean cubed density increment_small}.  Lemma \ref{bettermeancube} gives a better density increment at each step than Lemma \ref{lem:mean cubed density increment_small}, giving a factor $1+1/(36\gamma)$ instead of a factor two. 

\begin{lem}\label{bettermeancube}
Let $\alpha \geq 59/60$, $f:\mathbb{F}_p^n \rightarrow [0,1]$ satisfy $\mathbb{E}[f]=\alpha$, and $\gamma=1-\alpha \leq 1/60$. Let $H$ be a subspace of $\mathbb{F}_p^n$ with $|H|>\max(\gamma^{-3},4/\epsilon)$. If $\mathbb{E}_{y}[\lambda_{H+y}(f)] < \alpha^3-\epsilon$, then there is a subspace $H'$ of $H$ of codimension at most $\codim(H)+p^{\codim(H)}\cdot 144/\epsilon^2$ such that $b(H')-\alpha^3 \geq  \left(1+\frac{1}{36\gamma}\right)(b(H)-\alpha^3)+\frac{\epsilon}{20\gamma}$. 
\end{lem}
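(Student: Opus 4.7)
The plan is to follow the density-increment scheme of Lemma \ref{lem:mean cubed density increment_small}, but to replace its key estimate with a sharper Fourier bound exploiting the dense regime $\alpha \ge 59/60$. First, I will apply the weak regularity lemma (Lemma \ref{lem:(Weak-regularity-lemma.)}) within each affine translate $H+g$ of $H$ at parameter $\eta = \epsilon/12$ to obtain a subspace $T_g \subseteq H$ of codimension at most $144/\epsilon^2$ within $H$, and then set $H' := H \cap \bigcap_g T_g$, which immediately yields the stated codimension bound. Combining the counting lemma (Lemma \ref{lem:Counting lemma}), the identity (\ref{closeLlL}), the hypothesis $\mathbb{E}_g[\lambda(H+g)] < \alpha^3 - \epsilon$, and $|H| > 4\alpha/\epsilon$, I can deduce $\mathbb{E}_g[\Lambda(f_{T_g}|_{H+g})] \le \alpha^3 - \epsilon/2$. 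Writing $\alpha_g := \alpha(H+g)$, $\tilde\gamma_g := 1-\alpha_g$, $B := b(H) - \alpha^3$, and $\epsilon_g := \alpha_g^3 - \Lambda(f_{T_g}|_{H+g})$, this rearranges to $\mathbb{E}_g[\epsilon_g] \geq B + \epsilon/2$.

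The key new ingredient will exploit the complementary function. Restricting to $H+g$, set $\bar g := 1 - f_{T_g}$, a $[0,1]$-valued function with mean $\tilde\gamma_g$ that is constant on $T_g$-cosets. The identity (\ref{popdensecomp}) applied in the quotient $H/T_g$ yields $\Lambda(\bar g) = \tilde\gamma_g^3 + \epsilon_g$. Combining the Fourier expansion $\Lambda(\bar g) - \tilde\gamma_g^3 = \sum_{\chi \ne 0} \widehat{\bar g}(\chi)^2 \widehat{\bar g}(-2\chi)$ with the pointwise bound $|\widehat{\bar g}(\chi)| \le \mathbb{E}[\bar g] = \tilde\gamma_g$ (valid for every $\chi$ since $\bar g \ge 0$) and with Parseval $\sum_{\chi \ne 0} |\widehat{\bar g}(\chi)|^2 = V_g^T$, where $V_g^T := \mathrm{Var}_{t \in H/T_g}[\alpha(T_g+g+t)]$, produces the crucial inequality
\[
|\epsilon_g| \le \tilde\gamma_g \cdot V_g^T, \qquad \text{hence} \qquad V_g^T \ge \epsilon_g/\tilde\gamma_g.
\]
The extra factor $1/\tilde\gamma_g$, compared to the trivial bound $V_g^T \ge |\epsilon_g|$ underlying the proof of Lemma \ref{lem:mean cubed density increment_small}, is the source of the $1/(36\gamma)$ improvement.

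To convert this into the stated mean-cube-density increment, I will use Jensen's inequality on $x^3$ (noting that $H' \subseteq T_g$ refines the partition of $H+g$) to obtain
\[
b(H') \ge b(H) + 3\mathbb{E}_g[\alpha_g V_g^T] + \mathbb{E}_g[C_g^T],
\]
where $C_g^T := \mathbb{E}_t[(\alpha(T_g+g+t) - \alpha_g)^3]$. Splitting $C_g^T$ into its positive and negative parts and using $\alpha(T_g+g+t) - \alpha_g \in [-\alpha_g, \tilde\gamma_g]$ gives the clean pointwise lower bound $C_g^T \ge -\alpha_g V_g^T$, so $b(H') - b(H) \ge 2\mathbb{E}_g[\alpha_g V_g^T]$. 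Applying $V_g^T \ge \epsilon_g/\tilde\gamma_g$ and the identity $\alpha_g/\tilde\gamma_g = 1/\tilde\gamma_g - 1$, followed by a Cauchy-Schwarz step using $\mathbb{E}_g[\epsilon_g \tilde\gamma_g] \le \mathbb{E}_g[\tilde\gamma_g^3] \le \gamma$ (which follows from $\epsilon_g \le \tilde\gamma_g V_g^T \le \tilde\gamma_g^2$), and a final case split according to the relative size of $B + \epsilon/2$ and $\gamma$ (in the small case restricting to the subset $\{\tilde\gamma_g \le 2\gamma\}$, where $\alpha_g/\tilde\gamma_g \ge 1/(3\gamma)$ and whose complement has measure at most $1/2$ by Markov), will produce the claimed bound $b(H') - \alpha^3 \ge (1 + 1/(36\gamma))(b(H) - \alpha^3) + \epsilon/(20\gamma)$.

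The hard part will be the final bookkeeping needed to extract the explicit constants $1/36$ and $1/20$: simultaneously controlling the weak-regularity error (of order $\eta = \epsilon/12$), the cubic term $C_g^T$, the contributions from $g$ with large $\tilde\gamma_g$ where $\epsilon_g$ may be negative and $\alpha_g/\tilde\gamma_g$ need not be large, and the gap in (\ref{closeLlL}) between $\lambda$ and $\Lambda$. The hypotheses $\gamma \le 1/60$ and $|H| > \max(\gamma^{-3}, 4/\epsilon)$ will be used precisely to ensure that all of these error terms are dominated by the Fourier-derived main term.
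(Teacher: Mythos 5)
Your proof takes a genuinely different and interesting route from the paper's. The paper splits the translates $H_j$ into those with $\delta_j < -5\gamma$ and those with $\delta_j \geq -5\gamma$, handling the first class with the direct bound $\lambda(H_j) \geq \alpha_j^3 - (1-\alpha_j)^2$ derived from (\ref{popdensecomp}), and applying weak regularity only to the second class, where it controls the cubic term by the elementary estimate $\mathbb{E}[\delta_{ja}\delta_{jb}\delta_{jc}] \geq -18\gamma\,\mathbb{E}_b[\delta_{jb}^2]$ obtained via AM--GM and the pointwise bound $\delta_{ja} \le 6\gamma$. Your approach instead regularizes all translates uniformly and derives the key gain from the Fourier inequality $|\epsilon_g| = |\Lambda(\bar g) - \tilde\gamma_g^3| \leq \tilde\gamma_g V_g^T$, obtained from the pointwise bound $|\widehat{\bar g}(\chi)| \le \mathbb{E}[\bar g] = \tilde\gamma_g$ for the nonnegative function $\bar g = 1 - f_{T_g}$ together with Parseval. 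That inequality is correct and cleanly isolates the $1/\gamma$ gain; likewise the chain $b(H') \geq b(H) + 3\mathbb{E}_g[\alpha_g V_g^T] + \mathbb{E}_g[C_g^T]$ with $C_g^T \geq -\alpha_g V_g^T$, giving $b(H') - b(H) \geq 2\mathbb{E}_g[\alpha_g V_g^T]$, is sound, as is the reduction to $\mathbb{E}_g[\epsilon_g] > B + \epsilon/2$ (hence $\mathbb{E}_g[\tilde\gamma_g V_g^T] > B + \epsilon/2$).

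However, the final bookkeeping you sketch would not close. The Cauchy--Schwarz step yields $\mathbb{E}_g[\epsilon_g/\tilde\gamma_g] \geq (B + \epsilon/2)^2/\gamma$, but since the constraints force $B + \epsilon/2 \le \mathbb{E}_g[\tilde\gamma_g V_g^T] \le \mathbb{E}_g[\tilde\gamma_g^2] \leq \gamma$, one has $(B+\epsilon/2)^2/\gamma \leq B + \epsilon/2$, which is far below the target $\frac{B}{72\gamma} + \frac{\epsilon}{40\gamma}$ (for instance $B = 0$, $\epsilon$ small gives $\epsilon^2/(4\gamma)$ versus the needed $\epsilon/(40\gamma)$). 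The restriction to $\{\tilde\gamma_g \le 2\gamma\}$ is also too aggressive: for $\tilde\gamma_g > 2\gamma$ one has $\tilde\gamma_g < 2(\tilde\gamma_g - \gamma)$, so $\mathbb{E}_g[\tilde\gamma_g V_g^T\mathbf{1}(\tilde\gamma_g > 2\gamma)] \le \mathbb{E}_g[\tilde\gamma_g^2 \mathbf{1}(\tilde\gamma_g>2\gamma)] < 4\,\mathrm{Var}[\tilde\gamma_g]$, and since $B \ge 2\alpha\,\mathrm{Var}[\tilde\gamma_g]$ this bound is roughly $2B$, which can entirely consume the input $B + \epsilon/2$ and leave nothing on the good set. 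Markov only bounds the measure of the complement by $1/2$, which does not control the weight $\tilde\gamma_g V_g^T$ there. The fix is to raise the threshold to $c\gamma$ with $c$ around $5$ or $6$, where $(c/(c-1))^2 \cdot \frac{1}{2\alpha} < 1$: then $\mathbb{E}_g[\tilde\gamma_g V_g^T\mathbf{1}(\tilde\gamma_g > c\gamma)] < (\tfrac{c}{c-1})^2\mathrm{Var}[\tilde\gamma_g] \leq (\tfrac{c}{c-1})^2 \tfrac{B}{2\alpha}$, a proper fraction of $B$, and combined with $\alpha_g/\tilde\gamma_g \ge (1-c\gamma)/(c\gamma)$ on the good set the arithmetic closes with room to spare. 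It is notable that this lands on essentially the same threshold ($6\gamma$) that the paper uses in its split, though the paper deploys it very differently.
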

\begin{proof}
The proof begins along the lines of the proof of Lemma \ref{lem:mean cubed density increment_small} with $H=\mathbb{F}_p^n$. However, we can improve the bound on the mean cube density by a more careful analysis over simply using Schur's inequality. Denote the translates of $H$ by $H_j$, and denote the density of $f$ in $H_j$ by $\alpha_j=\alpha+\delta_j$, so $\mathbb{E}[\delta_j]=0$. Let $D$ be the number of translates of $H$. By the above discussion in (\ref{popdensecomp}) and (\ref{popdensecomp1}), the density of nontrivial $3$-APs in $H_j$ is at least $$\alpha_j^3-(1-\alpha_j)^2.$$ We will use this bound when $\delta_j < -5\gamma$. 

We apply the weak regularity lemma, Lemma \ref{lem:(Weak-regularity-lemma.)}, to each translate $H_j$ of $H$ with $\delta_j \geq -5\gamma$ to obtain an $\eta$-weakly-regular subspace $T_j$ of relative codimension at most $\lfloor\eta^{-2}\rfloor$ in $H$ with $\eta=\epsilon/12$. Denote the translates of $T_j$ in $H_j$ by $T_{jk}$ for $k \in H/T_j$. Let $D_j = |H/T_j| \le p^{\lfloor \eta^{-2}\rfloor}$. Denote the density of $f$ in $T_{jk}$ by $\alpha_{jk}=\alpha_j+\delta_{jk}$, so, for each $j$, we have $\mathbb{E}[\delta_{jk}]=0$. 
As in the proof of Lemma \ref{lem:mean cubed density increment_small}, the density of  $3$-APs in $H_j$ with nonzero common difference is at least $$\mathbb{E}[\alpha_{ja}\alpha_{jb}\alpha_{jc}]-6\eta=
\alpha_j^3+\mathbb{E}[\delta_{ja}\delta_{jb}\delta_{jc}]-6\eta,$$ 
where the expectations are over all triples of subspaces $T_{ja},T_{jb},T_{jc}$ that form a $3$-AP, and the equality is by expanding and using $\mathbb{E}[\delta_{ja}]=0$. 

Hence, 
\begin{align}
\mathbb{E}_{y}[\lambda_{H+y}(f)] &\ge \frac{1}{D}\left(\sum_{\delta_j<  -5\gamma}\left(\alpha_j^3-(1-\alpha_j)^2\right)+\sum_{\delta_j \geq  -5\gamma}  \left(\alpha_j^3+\mathbb{E}[\delta_{ja}\delta_{jb}\delta_{jc}]-6\eta\right)\right) \nonumber \\
&\ge b(H)-D^{-1}\sum_{\delta_j< -5\gamma}(1-\alpha_j)^2-6\eta+D^{-1}\sum_{\delta_j\geq  -5\gamma} \mathbb{E}[\delta_{ja}\delta_{jb}\delta_{jc}]. \label{largeden}
\end{align}

We next estimate the terms above. Observe that, for $ \delta_j \leq -5\gamma$, we have 
\begin{equation}\label{nextdense}(1-\alpha_j)^2=(\gamma-\delta_j)^2 \leq \left(\frac{6}{5}\delta_j\right)^2=\frac{36}{25}\delta_j^2. 
\end{equation}

Substituting in (\ref{nextdense}) and $\eta=\epsilon/12$ into (\ref{largeden}), 
\begin{align*}
\mathbb{E}_{y}[\lambda_{H+y}(f)] &\ge b(H)-D^{-1}\sum_{\delta_j<-5\gamma}\frac{36}{25}\delta_j^2-\epsilon/2+D^{-1}\sum_{\delta_j\geq  -5\gamma} \mathbb{E}[\delta_{ja}\delta_{jb}\delta_{jc}] \\
&\geq b(H)-\frac{36}{25}\mathbb{E}_j[\delta_j^2]-\epsilon/2+D^{-1}\sum_{\delta_j\geq  -5\gamma} \mathbb{E}[\delta_{ja}\delta_{jb}\delta_{jc}].
\end{align*}

We have 
$$b(H)-\alpha^3=\mathbb{E}_j[(\alpha+\delta_j)^3] -\alpha^3= 3\alpha \mathbb{E}_j[\delta_j^2]+\mathbb{E}_j[\delta_j^3] \geq  (3\alpha-1) \mathbb{E}_j[\delta_j^2] \geq  1.95 \, \mathbb{E}_j[\delta_j]^2.$$

Thus 
\begin{align}
\mathbb{E}_{y}[\lambda_{H+y}(f)]&\ge \alpha^3+\left(1-\frac{36/25}{1.95}\right)(b(H)-\alpha^3)-\epsilon/2+D^{-1}\sum_{\delta_j\geq  -5\gamma} \mathbb{E}[\delta_{ja}\delta_{jb}\delta_{jc}] \nonumber \\
&=\alpha^3+\frac{.51}{1.95}(b(H)-\alpha^3)-\epsilon/2+D^{-1}\sum_{\delta_j\geq  -5\gamma} \mathbb{E}[\delta_{ja}\delta_{jb}\delta_{jc}]. \label{nextdense2}
\end{align}

We still need to bound the last term. Observe that $\sum \delta_{ja}\delta_{jb}\delta_{jc} = D_j^2 \, \mathbb{E}[\delta_{ja}\delta_{jb}\delta_{jc}]$ as there are $D_j^2$ choices for the triple $(a,b,c)$ for which $T_{ja},T_{jb},T_{jc}$ are translates of $T_j$ in arithmetic progression, because there are $D_j$ choices for each of $a$ and $b$ (which uniquely determine $c$).  By throwing away the nonnegative terms, we have 

$$\sum \delta_{ja}\delta_{jb}\delta_{jc} \geq \sum_{\delta_{ja}\delta_{jb}\delta_{jc}<0} \delta_{ja}\delta_{jb}\delta_{jc}.$$

The above sum includes terms where all three of $\delta_{ja}$, $\delta_{jb}$, $\delta_{jc}$ are negative or exactly one is negative and the other two are positive. Hence, $$\sum_{\delta_{ja}\delta_{jb}\delta_{jc} < 0} |\delta_{ja}\delta_{jb}\delta_{jc}| \leq \sum_{\delta_{ja}< 0} |\delta_{ja}\delta_{jb}\delta_{jc}|+\sum_{\delta_{jb}< 0} |\delta_{ja}\delta_{jb}\delta_{jc}|+\sum_{\delta_{jc}< 0} |\delta_{ja}\delta_{jb}\delta_{jc}|,$$ as each term in the sums are nonnegative, and each term on the left hand side appears either once or three times on the right hand side. Thus, at least one of the three sums on the right hand side is at least $\frac{1}{3}\sum_{\delta_{ja}\delta_{jb}\delta_{jc} < 0} |\delta_{ja}\delta_{jb}\delta_{jc}| $. Without loss of generality, suppose it is the first sum, so $-\sum_{\delta_{ja}< 0} |\delta_{ja}\delta_{jb}\delta_{jc}| \leq \frac{1}{3} D_j^2\mathbb{E}[\delta_{ja}\delta_{jb}\delta_{jc}]$. By the arithmetic mean - geometric mean inequality, we have $|\delta_{ja} \delta_{jb} \delta_{jc} | \leq |\delta_{ja}| (\delta_{jb}^2+\delta_{jc}^2)/2$. Summing over all triples of subspaces in arithmetic progression, we obtain $$ \frac{1}{3} D_j^2\mathbb{E}[\delta_{ja}\delta_{jb}\delta_{jc}] \geq \sum_{\delta_{ja}< 0} \delta_{ja}\delta_{jb}^2 = \left(\sum_{\delta_{ja}<0} \delta_{ja} \right) \sum_{b} \delta_{jb}^2.$$  As $\delta_{ja}=\alpha_{ja}-\alpha_j \leq 1-\alpha_j \leq 1-\alpha +5\gamma = 6\gamma$ for all $j$ with $\delta_j \geq -5\gamma$, we have  $\sum_{\delta_{ja} \geq 0} \delta_{ja} \leq 6\gamma D_j$. As also $\mathbb{E}_a[\delta_{ja}]=0$, then 
$\sum_{\delta_{ja}<0} \delta_{ja} \geq -6\gamma D$. Hence,  $$\frac{1}{3} D_j^2\mathbb{E}[\delta_{ja}\delta_{jb}\delta_{jc}] \geq (-6\gamma D_j)\sum_{b}\delta_{jb}^2,$$
and so $\mathbb{E} [\delta_{ja}\delta_{jb}\delta_{jc}] \geq -18\gamma \mathbb{E}_b[\delta_{jb}^2]$. 

Substituting into (\ref{nextdense2}), 

\begin{equation}\label{nextedense3} \mathbb{E}_{y}[\lambda_{H+y}(f)]\ge \alpha^3+\frac{.51}{1.95}(b(H)-\alpha^3)-\epsilon/2-D^{-1}\sum_{\delta_j \geq -5\gamma}18\gamma \mathbb{E}_b[\delta_{jb}^2].
\end{equation}

As $\mathbb{E}_{y}[\lambda_{H+y}(f)] < \alpha^3-\epsilon$, we therefore obtain

\begin{equation}\label{usefulbound6}18\gamma D^{-1}\sum_{\delta_j \geq -5\gamma} \mathbb{E}_b[\delta_{jb}^2] \geq \frac{.51}{1.95}(b(H)-\alpha^3)+\epsilon/2.
\end{equation}

Let $H'= \bigcap_{j} T_j$. The codimension of $H'$ is at most $\codim(H)+\eta^{-2}p^{\codim(H)}=144\epsilon^{-2}p^{\codim(H)}+\codim(H)$. By convexity, we have  
$$b(H') \geq D^{-1}\left(\sum_{\delta_j<-5\gamma} \alpha_j^3+\sum_{\delta_j \geq -5\gamma}\mathbb{E}_a[\alpha_{ja}^3]\right).$$
By expanding with $\alpha_{ja}=\alpha_j+\delta_{ja}$ and using $\mathbb{E}[\delta_{ja}]=0$, we have $$\mathbb{E}_a[\alpha_{ja}^3]=\alpha_j^3+3\alpha\mathbb{E}_a[\delta_{ja}^2]+\mathbb{E}_a[\delta_{ja}^3] \geq 
\alpha_j^3+(3\alpha-1)\mathbb{E}_a[\delta_{ja}^2] \geq \alpha_j^3+1.95 \, \mathbb{E}_a[\delta_{ja}^2].$$ 
Thus,
$$b(H') \geq \mathbb{E}[\alpha_j^3]+1.95D^{-1}\sum_{\delta_j \geq -5\gamma} \mathbb{E}_a [\delta_{ja}^2]=b(H)+1.95D^{-1}\sum_{\delta_j \geq -5\gamma} \mathbb{E}_a [\delta_{ja}^2].$$ 
and so $1.95^{-1}\left(b(H')-b(H)\right) \geq D^{-1}\sum_{\delta_j \geq -5\gamma} \mathbb{E}_a [\delta_{ja}^2]$. Substituting into (\ref{usefulbound6}), we obtain 
$$18\gamma \cdot \frac{1}{1.95}\left(b(H')-b(H) \right)\geq \frac{.51}{1.95}(b(H)-\alpha^3)+\epsilon/2.$$
Substituting in $b(H')-b(H)=b(H')-\alpha^3-(b(H)-\alpha^3)$ and rearranging, we have $$b(H')-\alpha^3 \geq \left(1+\frac{.51}{18\gamma}\right)(b(H)-\alpha^3)+\frac{1.95}{36\gamma}\epsilon \geq \left(1+\frac{1}{36\gamma}\right)(b(H)-\alpha^3)+\frac{\epsilon}{20\gamma},$$
completing the proof. 
\end{proof}

We next discuss the proof of the lower bound in Theorem \ref{thmverydensetight}. Note that there must be a reason why Theorem 8 in \cite{FPI} (the weighted analogue which is used to deduce Theorem \ref{thm:Lower bound for line density}) does not hold for $\alpha$ close to one, as the lower bound it claims would be better than the upper bound we just got. We next discuss where in the proof of Theorem 8 in \cite{FPI} we relied on $\alpha$ not being close to one, and discuss  how to properly modify it to get the lower bound in Theorem \ref{thmverydensetight}. Since the proof is only a minor modification of Theorem 8 in \cite{FPI}, for brevity we do not repeat the details and only specify the key difference.
 
We reuse the notations in \cite{FPI}. The first level of the construction is identical. Observe that in the proof of Theorem 8 in \cite{FPI}, at each level $i \geq 2$ for $x \in H_i$ we partition the subspace with the first $n_{i-1}$ coordinates equal to $x$ into $p$ affine subspaces of relative codimension one (which are translates of each other) depending on the dot product of the last $m_i$ coordinates with $v(x)$, and for some of these codimension one subspaces we make the value of $f_i$ equal to zero, and on the other subspaces we make the value $\frac{1}{\zeta}(1+\eta)\alpha \approx \frac{3}{2}\alpha$. The problem with this construction for $\alpha$ close to one is that we would get a density which is bigger than one on some subspaces, and this is not allowed. So instead of making the values $0$ or something else (namely $\frac{1}{\zeta}(1+\eta)\alpha$) to keep the average value to be $(1+\eta)\alpha$ on these subspaces, we make the  values $1$ (instead of $\frac{1}{\zeta}(1+\eta)\alpha$) on the denser subspaces and $(1-\zeta)^{-1}\left((1+\eta)\alpha-\zeta\right)$ (instead of $0$) on the sparser subspaces, to keep the average value to be $(1+\eta)\alpha$ and all values in $[0,1]$. The rest of the proof is the same, apart from appropriately modifying the parameters. In particular, the exponential growth of $\mu_i$ (which is the fraction of the space $\mathbb{F}_p^{n_{i-1}}$ that $H_i$ takes up) has the base exponential constant in this version $\gamma^{-O(1)}$ instead of $90$ in order to counteract the smaller decrease in $3$-AP density that we get from the modification described above.

\section{Concluding remarks}

\noindent \textbf{Arithmetic Progressions in Groups}
\vspace{0.1cm}

Green \cite{Green05} further proved that Theorem \ref{thm:Original theorem}
holds not just in $\mathbb{F}_{p}^{n}$ but in any abelian group $G$ of odd order or in $[N]=\{1,2,\ldots,N\}$. In joint work with Zhao \cite{FPZ}, we extend Theorem 2 of \cite{FPI} to the interval setting. We also generalize the upper bound in Theorem 2 of \cite{FPI} to general abelian groups of odd order. A substantial difficulty in this setting is dealing with the possible lack of subgroups. The upper bound proof uses Fourier analysis on Bohr sets to extend the ideas here. With some additional ideas, the lower bound can be generalized to cyclic groups which can be written as a product of groups with appropriate growth in size. The construction however runs into a serious obstruction in the case the group has few subgroups, for example, when the group is a primal cyclic group. 

A major direction in additive combinatorics in recent years has been
to extend results from abelian groups to all (not necessarily abelian) groups.
For example, the Freiman-Ruzsa theorem gives a characterization of subsets in abelian groups that have small doubling, i.e., sets $A$ for which $|A+A|=O(|A|)$. Breuillard, Green,
and Tao \cite{BGT} have recently extended this to nonabelian groups,
which has diverse applications. 

Another important example
is Roth's theorem, which was extended by Bergelson, McCutcheon, and Zhang \cite{BMZ} to the nonabelian setting.  
This result says that if $G$ is a group of order $N$ and $A\subset G$ is without a nontrivial solution to $xy=z^{2}$ with $x,y,z$
distinct\footnote{Here, of course, the group operation is written multiplicatively.},
then $|A|=o(N)$. This also follows from the arithmetic triangle removal lemma of Kr\'al', Serra, and Vena \cite{KSV}
(see also \cite{Soly}), which was proved through an application of the triangle removal lemma. However, by using the triangle removal lemma, the proof gives a weak quantitative estimate. Recently, Sanders \cite{Sanders16} used representation theory in order to extend the standard Fourier proof to the nonabelian setting and give a new proof of this nonabelian Roth's theorem with the bound $|A|=N/(\log \log N)^{\Omega(1)}$. 

Another natural extension of Roth's theorem to groups states that if $G$ is a group of order $N$, and $A \subset G$ has no triple $x,xd,xd^2$ with $d \not = 1$, then $|A|=o(N)$. Pyber \cite{Pyber} proved that every group $G$ of order $N$ has an abelian subgroup $H$ of order at least $2^{\Omega(\sqrt{\log N})}$, which is in general best possible. By applying Roth's theorem for abelian groups in the cosets of $H$, we get this Roth-type theorem in the group $G$, and with a reasonable quantitative bound, see Solymosi \cite{Soly}.

We think it would be interesting to know if Green's theorem, Theorem \ref{thm:Original theorem}, extends to nonabelian groups. One version asks: does there exist, for each $\epsilon>0$,  an $N_1(\epsilon)$ such that for every group $G$ of order at least $N_1(\epsilon)$ and every subset $A \subset G$ of density $\alpha$, there is nonidentity element $d \in G$ such that the density of triples $x,xd,xd^2$ which are in $A$ is at least $\alpha^3-\epsilon$. If $G$ has an abelian subgroup $H$ of index $O(1)$, then applying the regularity-type upper bound argument, starting with the subgroup $H$, shows that such a result holds in this case with a similar bound on $N_1(\epsilon)$ as in the abelian case. 

A variant of this question asks: does there exist, for each $\epsilon>0$, an $N_2(\epsilon)$ such that if $G$ is a group of order at least $N_2(\epsilon)$ and $A \subset G$ has density $\alpha$, then there is a nonidentity element $d \in G$ such that the density of triples $x,y,z$ with $xz=y^2$ and $yx^{-1}=d$ which are in $A$ is at least $\alpha^{3}-\epsilon$? For quasirandom groups, which were introduced by Gowers \cite{Gow08}, it is easy to show that this version holds as the density of solutions to $xz=y^{2}$ which are in $A$ is asymptotically $\alpha^{3}-o(1)$. Further, it holds for quasirandom groups with only a polynomial bound on $N_2(\epsilon)$, instead of the tower-type bound as in the abelian case. This shows that, unlike in the abelian case, the quantitative bounds can depend substantially on the group structure.   

One possible approach to proving a nonabelian Green's theorem is by developing a nonabelian generalization of the arithmetic regularity lemma, which would likely have further applications. One would likely want to develop a nonabelian version of Bohr sets. Maybe some of the ideas on approximate subgroups in the important work of Breuillard, Green, and Tao \cite{BGT} or from representation theory as in the work of Gowers on quasirandom groups \cite{Gow08}, \cite{Gow17} could be helpful here. 

\vspace{0.2cm}
\noindent \textbf{Four-term arithmetic progressions}
\vspace{0.1cm} 

\noindent{Green and Tao \cite{Green05a,Green07,GrTa10}
proved that for each $\epsilon>0$ there is $n'(\epsilon)$ such that
for any $n\geq n'(\epsilon)$ and any subset of $\mathbb{F}_{5}^{n}$
of density $\alpha$, there is a nonzero $d\in\mathbb{F}_{5}^{n}$
such that the density of four-term arithmetic progressions with common
difference $d$ is at least $\alpha^{4}-\epsilon$. Ruzsa \cite{BHKR} proved that an analogous result does not hold for longer lengths. 

We think it would be interesting
to estimate $n'(\epsilon)$. Does it grow as a tower-type function? It appears the proof given in \cite{Green07} with the bound on the inverse Gowers theorem for $U^3$ from \cite{GrTa08} gives a wowzer-type upper bound, which is in the next level of the Grzegorczyk hierarchy after the tower function. The lower bound construction we presented here for three-term arithmetic progressions can be modified to give a lower bound on $n'(\epsilon)$ which is a tower of twos of height $\Theta(\log(1/\epsilon))$. To get an improved upper bound, some of the ideas of Green and Tao in the paper \cite{GrTa12} might be useful.

\vspace{0.2cm}
\noindent \textbf{Multidimensional generalization of cap sets and popular differences}
\vspace{0.1cm} 

Recall that the multidimensional cap set problem discussed in the beginning of the introduction asks to estimate the maximum size $r(n,m)$ of a subset of $\mathbb{F}_3^n$ which does not contain a $m$-dimensional affine subspace, and $N^{1-(m+1)3^{-m}} \leq r(n,m) \leq (1+o(1))N^{1-C^m}$, where $ C \approx 13.901$ is an explicit constant. 

It remains an interesting problem to tighten the bounds on $r(n,m)$.
Is the right exponential constant $3$, which comes from the random bound, or is it $C$, which comes from
applying the arithmetic triangle removal lemma, or is it something in between?

We have the following multidimensional generalization of Green's theorem. 

\begin{thm}\label{multiGreentheorem}
For every $\epsilon>0$ and positive integer $r$, there is a (least) positive integer $n(r,\epsilon)$ such that for every $n \geq n(r,\epsilon)$ and $A \subset \mathbb{F}_3^n$ of density $\alpha$, there is a subspace $S$ of dimension $r$ such that the density of translates of $S$ in $A$ is at least $\alpha^{3^r}-\epsilon$.   
\end{thm}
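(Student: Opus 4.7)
The plan is to prove Theorem \ref{multiGreentheorem} by induction on $r$, with Green's theorem (Theorem \ref{thm:Original theorem}) serving as the base case $r=1$. A one-dimensional subspace of $\mathbb{F}_3^n$ has the form $\{0,d,2d\}$ for a nonzero $d$, so a translate of it being contained in $A$ is precisely the statement that the associated three-term AP lies in $A$; the case $\alpha\le\epsilon^{1/3}$ is vacuous, and otherwise Green's theorem applied with $\beta=\alpha^3-\epsilon$ yields a bound $n(1,\epsilon)$ that is uniform in $\alpha$ by Theorem \ref{thm:largebound}.

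For the inductive step, suppose the claim holds for $r-1$ and let $A\subseteq\mathbb{F}_3^n$ have density $\alpha>\epsilon^{1/3^r}$ (the complementary case being vacuous). Set $\epsilon_1:=\epsilon/(2\cdot 3^{r-1})$ and $\epsilon_2:=\epsilon/2$. First I would apply Green's theorem with error $\epsilon_1$ to produce a nonzero direction $d\in\mathbb{F}_3^n$ for which the density of $3$-APs with common difference $d$ in $A$ is at least $\alpha^3-\epsilon_1$, which requires $n$ to exceed a quantity depending only on $\epsilon_1$ (uniformly in $\alpha$, by Theorem \ref{thm:largebound}). Next, fix any hyperplane $H\subset\mathbb{F}_3^n$ avoiding $d$ and form the derivative set
\[
B:=\{h\in H:\{h,h+d,h+2d\}\subseteq A\}.
\]
A short coset-counting argument, using that $B':=\{y\in\mathbb{F}_3^n:\{y,y+d,y+2d\}\subseteq A\}$ is invariant under translation by $d$, shows that the density of $B$ in $H$ equals the density of $3$-APs with common difference $d$ in $A$, and so is at least $\alpha^3-\epsilon_1$.

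Now I would apply the inductive hypothesis to $B$ inside $H\cong\mathbb{F}_3^{n-1}$ with error $\epsilon_2$, which requires $n-1\ge n(r-1,\epsilon_2)$, producing an $(r-1)$-dimensional subspace $S'\subseteq H$ whose translates have density in $B$ at least $(\alpha^3-\epsilon_1)^{3^{r-1}}-\epsilon_2$. The mean value theorem applied to $x\mapsto x^{3^{r-1}}$ on $[0,1]$ gives $(\alpha^3-\epsilon_1)^{3^{r-1}}\ge\alpha^{3^r}-3^{r-1}\epsilon_1$, so with this choice of $\epsilon_1$ and $\epsilon_2$ the density is at least $\alpha^{3^r}-\epsilon$. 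Finally, set $S:=S'+\langle d\rangle$, an $r$-dimensional subspace since $d\notin H\supseteq S'$. The decomposition $x+S=(x+S')\cup(x+S'+d)\cup(x+S'+2d)$ shows that $x+S\subseteq A$ iff $x+S'\subseteq B'$; combined with the invariance of the event $\{x+S\subseteq A\}$ under $x\mapsto x+d$ (which reduces averaging over $\mathbb{F}_3^n$ to averaging over $H$, where $B'\cap H=B$), we conclude that the density of translates of $S$ in $A$ equals the density of translates of $S'$ in $B$, giving the desired bound and completing the induction.

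The main technical point is the error bookkeeping already handled above: the loss $\epsilon_1$ incurred by Green's theorem, amplified to $3^{r-1}\epsilon_1$ upon raising to the $3^{r-1}$ power, together with the inductive loss $\epsilon_2$, sum to exactly $\epsilon$. The recursion $n(r,\epsilon)\le n(r-1,\epsilon/2)+n(1,\epsilon/(2\cdot3^{r-1}))+1$ yields a tower-type upper bound on $n(r,\epsilon)$ via Theorem \ref{thm:largebound} at the base, though only the qualitative existence of $n(r,\epsilon)$ is asserted in the statement of Theorem \ref{multiGreentheorem}.
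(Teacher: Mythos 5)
Your proof is correct, and it takes a genuinely different inductive route from the one sketched in the paper. The paper's argument (given for Theorem \ref{multiGreentheoremgeneral}) applies the inductive hypothesis \emph{first} to find $d_1,\ldots,d_{r-1}$ and the set $A_0$ of good initial points, and only \emph{then} applies Green's theorem to $A_0$ to find $d_r$; to ensure the resulting $3$-box is proper (equivalently, in $\mathbb{F}_3^n$, that $d_1,\ldots,d_r$ are linearly independent), the paper needs the refinement that a positive fraction of differences $d_r$ are popular, so that one can avoid the boundedly many bad choices. You reverse the order: you apply Green's theorem first to extract a popular direction $d$, then pass to a hyperplane $H$ avoiding $d$ and the derivative set $B\subset H$, and apply the inductive hypothesis to $B$. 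The hyperplane restriction automatically places $S'$ inside $H$, so $d\notin S'$ and $S=S'+\langle d\rangle$ has dimension $r$ for free, with no need for the ``many popular differences'' refinement. This is cleaner for the $\mathbb{F}_3^n$ subspace statement, and the coset-invariance of $B'$ under translation by $d$ (which makes the density bookkeeping exact) is a nice structural fact special to $\mathbb{F}_3$ that the paper's group-level argument does not exploit. The trade-off is that the paper's argument applies verbatim to all abelian groups of odd order and to $[N]$ via proper $3$-boxes (Theorem \ref{multiGreentheoremgeneral}), where no analogue of the hyperplane $H$ and the clean derivative set is available; your argument is tailored to the finite-field setting. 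Your error bookkeeping is right ($3^{r-1}\epsilon_1+\epsilon_2=\epsilon$), and your appeal to Theorem \ref{thm:largebound} to make $n(1,\cdot)$ uniform in $\alpha$ is the correct thing to invoke.
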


The multidimensional cap set result can be generalized to vector spaces over a fixed finite field, and Theorem \ref{multiGreentheorem} to abelian groups of odd order or to intervals, but we need to replace the notion of subspace by ``box'', also known as a generalized arithmetic progression. A $k$-box $B$ of dimension $r$ is a set of the form $\{a_0+i_1d_1+i_2d_2+\cdots+i_rd_r: 0 \leq i_j \leq k-1~\textrm{for}~1 \leq j \leq r\}$. So a $k$-box of dimension one is just a $k$-term arithmetic progression. It is proper if all the elements are distinct, that is, if $|B|=k^r$. We refer to $d_1,\ldots,d_r$ as the common differences of the $k$-box, and $a_0$ as the initial term. 

\begin{thm}\label{multiGreentheoremgeneral}
For every $\epsilon>0$ and positive integer $r$, there is a (least) positive integer $N(r,\epsilon)$ such that the following holds. For every $N \geq N(r,\epsilon)$, if $G$ is an abelian group of odd order $N$ or $G=[N]$, and $A \subset G$ of density $\alpha$, then there are $d_1,\ldots,d_r$ such that the $3$-boxes of dimension $r$ with common differences $d_1,\ldots,d_r$ are proper and the number of them in $A$ is at least $(\alpha^{3^r}-\epsilon)N$. 
\end{thm}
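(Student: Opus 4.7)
The plan is to prove Theorem \ref{multiGreentheoremgeneral} by induction on $r$. The base case $r=1$ is the extension of Green's theorem (Theorem \ref{thm:Original theorem}) to abelian groups of odd order and to $[N]$, also due to Green \cite{Green05}: it produces a nonzero $d_1 \in G$ with $3$-AP density in $A$ at least $\alpha^3 - \epsilon$, and since $2d_1 \neq 0$ (in the odd-order case) or $d_1 \neq 0$ (in $[N]$), the resulting $3$-APs are automatically proper. Throughout we may assume $\alpha \geq \epsilon^{3^{-r}}$, since otherwise $(\alpha^{3^r} - \epsilon)N < 0$ and the conclusion is trivial.

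For the inductive step, with parameters $\eta_1 = \epsilon/6$ and $\eta_2 = \epsilon/2$, I would first apply the inductive hypothesis to $A$ in $G$ with $(r-1, \eta_1)$, obtaining common differences $d_1, \ldots, d_{r-1}$ for which the set $T \subset G$ of initial terms of proper $(r-1)$-dimensional $3$-boxes with these common differences contained in $A$ has density $|T|/N \geq \alpha^{3^{r-1}} - \eta_1$. The key observation is that for any $d_r \in G$, an $r$-dimensional $3$-box with common differences $d_1, \ldots, d_r$ and initial term $a_0$ lies in $A$ iff each of its three $(r-1)$-dimensional layers (indexed by $i_r \in \{0,1,2\}$) lies in $A$, equivalently, iff $a_0, a_0 + d_r, a_0 + 2d_r \in T$. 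Hence the number of such $r$-boxes in $A$ equals $N$ times the $3$-AP density in $T$ with common difference $d_r$. Applying the base case to $T$ with parameter $\eta_2$ produces a popular $d_r$ giving density at least $(|T|/N)^3 - \eta_2 \geq (\alpha^{3^{r-1}} - \eta_1)^3 - \eta_2 \geq \alpha^{3^r} - 3\eta_1 - \eta_2 = \alpha^{3^r} - \epsilon$, as desired.

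It remains to arrange properness of the full $r$-dimensional box, which is equivalent to $\sum_{j=1}^r c_j d_j \neq 0$ for every nonzero $(c_1, \ldots, c_r) \in \{-2, \ldots, 2\}^r$. The inductive choice of $d_1, \ldots, d_{r-1}$ handles all such relations with $c_r = 0$, so the constraint reduces to $d_r \notin S$, where $S$ has at most $4 \cdot 5^{r-1}$ elements (using invertibility of $c_r \in \{\pm 1, \pm 2\}$ in the odd-order case, and uniqueness of $d_r$ satisfying $c_r d_r = v$ in $[N]$). The main obstacle is thus that Green's theorem, as stated, produces only one popular $d_r$, while we need one outside this prescribed set. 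The way around this is that the standard proof of Green's theorem (via arithmetic regularity) actually yields many popular $d_r$'s: one passes to a substructure (subgroup or Bohr set) $H \subset G$ of index depending only on $\eta_2$ on which $\mathbb{E}_{d \in H}[\rho_T(d)] \geq (|T|/N)^3 - \eta_2/3$, and a simple averaging argument then produces $\Omega_{r, \epsilon}(|H|)$ popular $d \in H$. Under the assumption $\alpha \geq \epsilon^{3^{-r}}$ we have $|T|/N$ bounded below by a function of $r$ and $\epsilon$, so the set of popular $d$'s in $H$ grows linearly in $N$; for $N \geq N(r,\epsilon)$ large enough its size exceeds $|S|$, and we select $d_r$ that is both popular and outside $S$, completing the induction.
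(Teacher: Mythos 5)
Your proposal is correct and takes essentially the same approach as the paper: induction on $r$ via the base case of Green's theorem, identifying the $r$-box count as a $3$-AP count in the set $T$ of good initial terms, and observing that the regularity-lemma proof of Green's theorem actually yields $\Omega_\epsilon(N)$ popular differences so that one may avoid the $O(5^r)$ values of $d_r$ that would destroy properness. The only differences are cosmetic (you use $\eta_1 = \epsilon/6$, $\eta_2 = \epsilon/2$ where the paper uses $\epsilon/4$, $\epsilon/4$), and you spell out a couple of steps the paper leaves implicit, namely the harmless reduction to $\alpha \geq \epsilon^{3^{-r}}$ to keep $|T|/N$ bounded below, and the explicit count $|S| \leq 4 \cdot 5^{r-1}$.
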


The proof of Theorem \ref{multiGreentheoremgeneral} is by induction on $r$. The base case $r=1$ is simply Green's theorem. Suppose we would like to prove it for $r>1$. Let $A \subset G$ with $|G|=N$ sufficiently large and $|A|=\alpha N$. We apply the induction hypothesis to $A$ with parameters $r-1$ and  $\epsilon/4$ to obtain $d_1,\ldots,d_{r-1}$ such that the sums $i_1d_1+\cdots+i_{r-1}d_{r-1}$ with $0 \leq i_j \leq 2$ for $1 \leq j \leq r-1$ are distinct, the number of $a_0 \in [N]$ for which $\{a_0+i_1d_1+i_2d_2+\cdots+i_{r-1}d_{r-1}: 0 \leq i_j \leq k-1~\textrm{for}~1 \leq j \leq r-1\} \subset A$ is at least $\left(\alpha^{3^{r-1}}-\epsilon/4\right)N$, and we let $A_0$ be the set of such initial terms $a_0$. The proof of Green's theorem shows that not only is there a single nonzero $d$ which is a popular difference, but in fact a positive constant fraction (depending on $\epsilon$) of $d$ are popular differences. Applying Green's theorem to $A_0$, we get many choices of $d_r$ (more than $5^r$ suffices) such that the number of $3$-APs in $A_0$ of common difference $d_r$ is at least 
$\left(\left(\alpha^{3^{r-1}}-\epsilon/4\right)^3-\epsilon/4\right)N \geq \left(\alpha^{3^r}-\epsilon\right)N$. We can then choose such a $d_r$ such that all the sums  $i_1d_1+\cdots+i_{r}d_{r}$ with $0 \leq i_j \leq 2$ for $1 \leq j \leq r$ are distinct. The $r$-dimensional $3$-boxes with common differences $d_1,\ldots,d_r$ are proper and at least $\left(\alpha^{3^r}-\epsilon\right)N$ of them are contained in $A$, completing the proof. This proof shows that for $r$ fixed, $N(r,\epsilon)$ in Theorem \ref{multiGreentheoremgeneral} grows at most as a tower of twos of height $\Theta_r(\log(1/\epsilon))$. A modification to our lower bound constructions shows that this bound is tight over $\mathbb{F}_p^{n}$ for each fixed $r$. For brevity, we leave out the details of the proof.  

The proof of Theorem \ref{multiGreentheoremgeneral} together with the result of Green and Tao \cite{GrTa10} for four-term progressions shows that Theorem \ref{multiGreentheoremgeneral} also holds with the length $3$ replaced by $4$. 

\vspace{0.2cm}
\noindent \textbf{From Tower to Wowzer}
\vspace{0.1cm} 

Ron Graham \cite{Graham} asked if faster growing functions like wowzer-type (the next level in the Grzegorczyk hierarchy after tower) naturally appear in similar problems. Formally, the wowzer function is defined by $\textrm{Wow}(1)=2$ and $\textrm{Wow}(n)=\tower(\textrm{Wow}(n-1))$, where $\tower(n)=\tower(2,n)$ is an exponential tower of twos of height $n$.  

We first remark that Green's proof of his theorem, which is obtained by directly applying the arithmetic regularity lemma and the counting lemma, gives the following strengthening. It is stronger by the fact that, for a set of density $\alpha$, the mean cube density of a subspace  is at least $\alpha^3$ by convexity.

\begin{thm}\label{regularitymeancube}
For each $\epsilon>0$  and $p$, there is a least positive integer $m_p(\epsilon)$ such that for every $A \subset \mathbb{F}_p^n$ of density $\alpha$, there is a subspace $H$ of $\mathbb{F}_p^n$ of codimension at most $m_p(\epsilon)$ such that the density of $3$-APs with common difference in $H$ is at least $b(H)-\epsilon$. 
\end{thm}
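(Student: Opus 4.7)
I will mirror Green's proof of Theorem \ref{thm:Original theorem}, which yields this strengthening as a by-product. Let $f = \mathbf{1}_A : \mathbb{F}_p^n \to \{0,1\}$ be the indicator of $A$, a function of density $\alpha$. Write $\Lambda_H(f) = \mathbb{E}_{x \in G,\, d \in H}[f(x) f(x+d) f(x+2d)]$ for the density of $3$-APs of $f$ with common difference in $H$. The goal is to find a subspace $H$ of codimension at most $m_p(\epsilon)$ with $\Lambda_H(f) \geq b(H) - \epsilon$.

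The first observation is that, writing $f_H(x) := \mathbb{E}_{y \in H+x}[f(y)] = \alpha(H+x)$, the function $f_H$ is constant on each coset $H_j$ of $H$ with value $\alpha_j := \alpha(H_j)$. Since every $3$-AP with common difference $d \in H$ lies within a single coset, we have
\[
\Lambda_H(f_H) \;=\; \mathbb{E}_{x \in G,\, d \in H}[f_H(x)^3] \;=\; \mathbb{E}_j [\alpha_j^3] \;=\; b(H).
\]
Thus $b(H)$ is precisely the ``ideal'' value of $\Lambda_H$ under the averaged approximation $f_H$, and the task reduces to showing $|\Lambda_H(f) - \Lambda_H(f_H)| \leq \epsilon$ for a suitable $H$.

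To establish this, I would apply the weak regularity lemma (Lemma \ref{lem:(Weak-regularity-lemma.)}) iteratively, Szemer\'edi-style: first to $f$ on $G$ with parameter $\delta = \Theta(\epsilon)$ to obtain an initial subspace $H_0$, and then within each coset $H_{0,j}$ to the restriction $f|_{H_{0,j}}$ (viewed on $H_0$) with parameter $\delta$, intersecting the resulting subspaces to get $H \subset H_0$. By construction, for all but an $\epsilon$-fraction of cosets $H_j$ of $H$, the restriction $f_j := f|_{H_j}$ (viewed as a function on $H$) is $\delta$-weakly-regular on the group $H$. On any such ``good'' coset, $f_j$ is $\delta$-close in the Fourier sense to the constant function $\alpha_j$, so the counting lemma (Lemma \ref{lem:Counting lemma}) applied on $H$ yields $|\Lambda(f_j) - \alpha_j^3| \leq 3\delta\alpha_j$; for the bad cosets, the trivial bound $|\Lambda(f_j) - \alpha_j^3| \leq 1$ contributes at most $O(\epsilon)$. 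Since the coset decomposition gives $\Lambda_H(f) = \mathbb{E}_j \Lambda(f_j)$, averaging yields $|\Lambda_H(f) - b(H)| = O(\delta + \epsilon) \leq \epsilon$ after adjusting constants.

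The main technical hurdle is that a single application of the weak regularity lemma on $G$ only controls $\max_\chi |\widehat{f}(\chi) - \widehat{f_H}(\chi)|$, which translates via the identity $\mathbb{E}_j |\widehat{f_j}(\psi)|^2 = \sum_i |\widehat{f}(\tilde\psi_i)|^2$ (summing over the $|G/H|$ extensions $\tilde\psi_i$ of $\psi \in \widehat{H}$ to $\widehat{G}$) into only an \emph{averaged} bound on per-coset Fourier coefficients, not the uniform one required to invoke the counting lemma coset-by-coset. This is what necessitates the iteration above, and it is the source of the tower growth: after the two-layer iteration, $H$ has codimension at most $\delta^{-2} + p^{\delta^{-2}} \delta^{-2}$, a tower of $p$'s of height a bounded function of $1/\epsilon$, matching the tower-type bound known for $n_p(\alpha, \beta)$ in Green's theorem.
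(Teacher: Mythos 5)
Your high-level plan is correct and matches the paper's approach: apply Green's arithmetic regularity lemma, decompose $\Lambda_H(f)=\mathbb{E}_j\Lambda(f_j)$ over cosets, and invoke the counting lemma coset-by-coset, noting $\Lambda_H(f_H)=b(H)$. The observation that $b(H)$ is exactly the $3$-AP count of the averaged function and the use of the bad/good coset split are both right.

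However, the construction of $H$ has a genuine gap. You correctly identify the obstruction (a single application of the weak regularity lemma yields only an \emph{averaged} bound $\mathbb{E}_j |\widehat{f_j}(\psi)|^2 = \sum_i |\widehat{f}(\tilde\psi_i)|^2$ on per-coset Fourier coefficients), but the two-layer iteration you then propose does \emph{not} fix it. After applying the weak regularity lemma within each coset $H_{0,j}$ of $H_0$ to get $T_j$, the conclusion you have is $|\widehat{f|_{H_{0,j}}}(\psi)|\le \delta$ for $\psi\in\widehat{H_0}$ nontrivial on $T_j$. But the same averaging identity, now applied one level down, gives only $\mathbb{E}_k |\widehat{f|_{T_{j,k}}}(\psi')|^2 = \sum_i |\widehat{f|_{H_{0,j}}}(\tilde\psi'_i)|^2 \le \min\bigl(1, p^{\delta^{-2}}\delta^2\bigr)$ for $\psi'$ nonzero on $T_j$ — and $p^{\delta^{-2}}\delta^2 \gg 1$, so this is vacuous. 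The same obstruction recurs at every level, and a bounded number of refinements does not produce per-coset uniformity. What you actually need is the full energy-increment iteration behind Green's arithmetic regularity lemma: if a $\delta$-fraction of cosets of $H$ are irregular, pass to a refinement and gain $\Omega(\delta^3)$ in $L^2$ energy; since energy is bounded by $1$, the process stops after $\delta^{-O(1)}$ steps. This is what the paper invokes (``Green's proof \ldots obtained by directly applying the arithmetic regularity lemma and the counting lemma'') and it yields codimension a tower of height $\epsilon^{-O(1)}$. Your two-step argument would yield codimension a tower of bounded height with $\epsilon^{-O(1)}$ on top, which is impossible: the paper proves (Theorem \ref{regularitymeancube1}) a lower bound that $m_p(\epsilon)$ grows as a tower of height $\Omega(1/\epsilon)$, so any argument producing a bounded-height tower must be incorrect.
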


The original proof of Theorem \ref{regularitymeancube} using the arithmetic regularity lemma gives an upper bound on $m_p(\epsilon)$ for $p$ fixed which grows as a tower of height $\epsilon^{-O(1)}$. Adapting the upper bound proof from \cite{FPI} gives a better upper bound which is a tower of height $\Theta(1/\epsilon)$. We can get a matching lower bound, so a tower of height $\Omega(1/\epsilon)$, by modifying the construction used to give a lower bound in Green's theorem from \cite{FPI}. In the lower bound construction in \cite{FPI}, the fraction of subspaces we make perturbations to increases by a constant factor at each step. To get the lower bound here, after the first level, the fraction of subspaces we make perturbations to at each step is the same.  For brevity, we leave out the details of the proof.  

\begin{thm}\label{regularitymeancube1}
The function $m_p(\epsilon)$ defined in Theorem \ref{regularitymeancube} for $p$ fixed grows as a tower of height $\Theta(1/\epsilon)$. 
\end{thm}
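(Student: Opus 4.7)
I would adapt the density increment argument from the proof of Theorem \ref{firstuppthm}, but tracking the gap $b(H) - \mathbb{E}_g[\Lambda(H+g)]$ rather than $b(H) - \alpha^{3}$. Specifically, I would first establish the following analogue of Lemma \ref{lem:mean cubed density increment_small}: if $H$ is a subspace of $\mathbb{F}_p^n$ with $|H|$ large enough in terms of $1/\epsilon$ and $\mathbb{E}_g[\Lambda(H+g)] < b(H) - \epsilon$, then there exists a subspace $H' \subset H$ with
\[
\textrm{Codim}(H') \le \textrm{Codim}(H) + p^{\textrm{Codim}(H)} \cdot O(\epsilon^{-2})
\]
satisfying $b(H') \ge b(H) + \Omega(\epsilon)$. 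The proof follows the same blueprint: inside each coset $H+g$, apply the weak regularity lemma with parameter $\eta = \Theta(\epsilon)$ to obtain a regularizing subspace $T_g$ and its associated stepwise-constant average $t_g$; use the counting lemma to relate $\Lambda(H+g)$ to $\Lambda(t_g)$ up to error $O(\eta)$; and use Jensen applied to $x \mapsto x^{3}$ to translate an $\Omega(\epsilon)$ average deficit of $\Lambda(H+g)$ below $\alpha(H+g)^{3}$ into an $\Omega(\epsilon)$ increment in the mean cube density when we refine to $H' := \bigcap_g T_g$.

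Starting from $H_0 = \mathbb{F}_p^n$ with $b(H_0) = \alpha^{3}$ and iterating this lemma, the process must terminate at some $H$ with $\mathbb{E}_g[\Lambda(H+g)] \ge b(H) - \epsilon$. Since $b(H) = \mathbb{E}_g[\alpha(H+g)^{3}] \le \mathbb{E}_g[\alpha(H+g)] = \alpha \le 1$, the mean cube density can increase by a total of at most $1$ over the whole process, so at most $O(1/\epsilon)$ iterations occur. Because each iteration exponentiates the codimension, the final subspace has codimension at most a tower of $p$'s of height $O(1/\epsilon)$, establishing $m_p(\epsilon) \le \textrm{Tow}(p, O(1/\epsilon))$.

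\textbf{Lower bound.} I would modify the lower bound construction of \cite{FPI} which underlies Theorem \ref{thm:Lower bound for line density}. There, a weighted function $f$ is built in levels with dimensions $n = m_1 + m_2 + \cdots + m_s$, where $m_{i+1}$ is roughly exponential in $m_i$, and at each level a fraction $c_i$ of the current cosets is perturbed so as to depress the $3$-AP density below its expected cube. In \cite{FPI} the fractions $c_i$ grow geometrically, producing a cumulative deficit of $\epsilon$ after $\Theta(\log(1/\epsilon))$ levels. I would instead fix $c_i$ to be a single absolute constant $c$ for all $i \ge 2$, so that each level only contributes an additive constant drop of $\mathbb{E}_g[\Lambda(H+g)]$ below $b(H)$ at its scale rather than doubling a running deficit. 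To maintain a simultaneous deficit of at least $\epsilon$ between $\Lambda_H(f)$ and $b(H)$ for every subspace $H$ of codimension smaller than the dimensions built up in the lower levels, it suffices to use $s = \Omega(1/\epsilon)$ levels, yielding a dimension $n$ that is a tower of $p$'s of height $\Omega(1/\epsilon)$.

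\textbf{Main obstacle.} The delicate point is the modified density increment in the upper bound. The existing Lemma \ref{lem:mean cubed density increment_small} assumes the strictly stronger hypothesis $\mathbb{E}_g[\Lambda(H+g)] < \alpha^{3} - \epsilon$ (strictly stronger since $b(H) \ge \alpha^{3}$ by Jensen), so its proof must be rewritten to detect Fourier energy against the local means $\alpha(H+g)$ inside each coset rather than against the global mean $\alpha$, and one must verify the gain on $b(H')$ remains linear in $\epsilon$. On the lower bound side, the analogous subtlety is to verify that the constant-fraction perturbation construction really does produce a gap $\Lambda_H(f) < b(H) - \epsilon$ for every subspace $H$ of codimension up to $\Omega(1/\epsilon)$ levels, not only along individual common differences as was required for Green's theorem; this entails showing that the contributions to the gap from different levels combine correctly across all bounded-codimension subspaces.
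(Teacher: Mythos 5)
Your proposal is correct and follows essentially the same two-pronged route the paper sketches (the paper omits the details explicitly for brevity). For the upper bound, the paper says to adapt the density increment argument of \cite{FPI}, and your idea of replacing the hypothesis $\mathbb{E}_g[\lambda(H+g)] < \alpha^3 - \epsilon$ by $\mathbb{E}_g[\lambda(H+g)] < b(H) - \epsilon$ is exactly the right modification: Schur's inequality in each coset gives $\mathbb{E}_g[\Lambda(t_g)] \ge 2b(H) - b(H')$ up to regularity error, so the weaker hypothesis still yields $b(H') \ge b(H) + \Omega(\epsilon)$, just without the doubling; boundedness of $b$ then gives $O(1/\epsilon)$ iterations. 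For the lower bound, the paper says precisely what you propose -- after the first level, keep the perturbation fraction constant instead of growing it geometrically -- so that the mean cube density rises by an additive $\Theta(\epsilon)$ per level rather than doubling its gap, requiring $\Theta(1/\epsilon)$ levels before the densities saturate.
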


While the tower height grows faster in the above result than in Green's theorem, it is still in the same level of the Grzegorczyk hierarchy. To go to the next level of the Grzegorczyk hierarchy, it is natural to try to find an extremal problem that essentially encodes an arithmetic strong regularity lemma. The (graph) strong regularity lemma of Alon, Fischer, Krivelevich, and Szegedy \cite{AFKS} finds a pair of partitions $P$ and $Q$, with $Q$ a refinement of $P$, and the regularity of $Q$ is allowed to depend on the size of $P$. The proof of the strong regularity lemma involves applying Szemer\'edi's regularity lemma at each step, and so the bound one gets on the number of parts of $Q$ is iteratively applying the tower function $\epsilon^{-\Theta(1)}$ times. In other words, it is of wowzer-type. That such a bound is necessary was shown by Conlon and the first author \cite{CF12} and independently with a weaker wowzer-type by Kalyanasundaram and Shapira \cite{KaSh}. 

An arithmetic analogue of the strong regularity lemma is as follows. For each function $g:\mathbb{Z}_{\geq 0} \rightarrow (0,1)$, there is $M_p(g)$ such that the following holds. If $A \subset \mathbb{F}_p^n$, then there are subspaces $H_1 \subset H_0 \subset \mathbb{F}_p^n$ such that the codimension of $H_1$ is at most $M_p(g)$, $b(H_1) \leq b(H_0)+g(0)$, and $H_1$ is $g(m)$-regular, where $m$ is the codimension of $H_0$. As long as $1/g(n)$ grows not too slowly and not too fast with $n$, which here means it is bounded between any constant number of iterations of the logarithmic function and any constant number of iterations of the exponential function, then the function $M_p(g)$ grows as wowzer in $\Theta(1/\epsilon)$ where $\epsilon=g(0)$. 

The next result follows easily from the arithmetic strong regularity lemma and the counting lemma. 

\begin{thm}\label{wowzer1}
For each function $g:\mathbb{Z}_{\geq 0} \rightarrow (0,1)$, there is $M'_p(g)$ such that the following holds. If $A \subset \mathbb{F}_p^n$ then there are subspaces $H_1 \subset H_0 \subset \mathbb{F}_p^n$ such that the codimension of $H_1$ is at most $M'_p(g)$ and the density of $3$-APs with common difference in $H_1$ is in the interval $[b(H_0)-g(m),b(H_0)+g(0)]$, where $m$ is the codimension of $H_0$. 
\end{thm}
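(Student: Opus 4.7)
The plan is to deduce Theorem \ref{wowzer1} directly from the arithmetic strong regularity lemma stated in the paragraph just above, together with the counting lemma (Lemma \ref{lem:Counting lemma}), applying each essentially once.

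First, I would apply the arithmetic strong regularity lemma to the indicator function $f = 1_A$, with the input function $g$ replaced by a suitable rescaling (dividing by a small absolute constant, chosen to absorb the factor of $3$ appearing in Lemma \ref{lem:Counting lemma}). This yields subspaces $H_0 \subseteq H_1 \subseteq \mathbb{F}_p^n$ with $\textrm{codim}(H_1) \leq M_p(g)$, such that $H_1$ is $g(m)$-weakly-regular with respect to $f$ (where $m = \textrm{codim}(H_0)$) and the mean cube densities $b(H_1)$ and $b(H_0)$ differ by at most $g(0)$. I would then set $M'_p(g) := M_p(g)$ after the rescaling.

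Next, I would identify the density of $3$-APs with common difference in $H_1$ of $f$ as a near-copy of $b(H_1)$. The averaged function $f_{H_1}$ is constant on each affine translate $H_{1,j}$ of $H_1$ with value $\alpha(H_{1,j})$, so the density of $3$-APs with common difference in $H_1$ of $f_{H_1}$ equals exactly $\mathbb{E}_j[\alpha(H_{1,j})^3] = b(H_1)$. Because $H_1$ is $g(m)$-weakly-regular, $f$ and $f_{H_1}$ are $g(m)$-close in Fourier norm, so Lemma \ref{lem:Counting lemma} (specialized to the setting where the common difference is restricted to $H_1$) gives that the density of $3$-APs with common difference in $H_1$ of $f$ differs from $b(H_1)$ by at most $3\alpha \cdot g(m) \leq 3g(m)$.

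Combining these two estimates places the density of $3$-APs with common difference in $H_1$ in a window around $b(H_0)$ of total width $O(g(0) + g(m))$; after the initial rescaling of $g$, this window sits inside $[b(H_0) - g(m), b(H_0) + g(0)]$, as required. The main subtlety --- more a piece of bookkeeping than a genuine obstacle --- is adapting Lemma \ref{lem:Counting lemma} to the relevant quantity $\mathbb{E}_j [\Lambda(H_{1,j})]$ with common difference restricted to $H_1$, rather than to the global $3$-AP density. This is handled by expanding the difference between the $3$-AP densities (with common difference in $H_1$) of $f$ and of $f_{H_1}$ by the standard telescoping trick: the difference splits into three Fourier error terms, each controlled by the uniform bound $\lVert \widehat{f} - \widehat{f_{H_1}} \rVert_\infty \leq g(m)$ in exactly the same manner as the proof of Lemma \ref{lem:Counting lemma} itself.
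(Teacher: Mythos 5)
Your high-level plan matches the paper's: invoke the arithmetic strong regularity lemma and then the counting lemma. But there is a genuine gap in the step where you transfer the counting lemma to common differences restricted to $H_1$. You interpret ``$g(m)$-regular'' in the strong regularity lemma as the \emph{global} weak regularity of Lemma \ref{lem:(Weak-regularity-lemma.)}, i.e.\ $|\widehat{f}(\chi)-\widehat{f_{H_1}}(\chi)|\leq g(m)$ for all characters $\chi$ of $\mathbb{F}_p^n$, and then claim the telescoped error terms are controlled by this uniform Fourier bound ``in exactly the same manner as the proof of Lemma \ref{lem:Counting lemma} itself.'' This is false. Writing the indicator of $H_1$ as $\frac{|H_1|}{p^n}\sum_{\chi_0\in H_1^\perp}\chi_0$, the restricted-difference trilinear form unfolds into a \emph{double} Fourier sum
\[
\mathbb{E}_{x\in\mathbb{F}_p^n,\,d\in H_1}\bigl[e(x)f(x+d)f(x+2d)\bigr]
=\sum_{\chi_0\in H_1^\perp}\ \sum_{\chi}\ \widehat{e}(\chi_0\chi)\,\widehat{f}(\chi_0^{-1}\chi^{-2})\,\widehat{f}(\chi),
\]
and after extracting $\max_\psi|\widehat{e}(\psi)|\leq g(m)$, the remaining sum over $(\chi_0,\chi)$ is bounded via Cauchy--Schwarz and Parseval only by $|H_1^\perp|\,\alpha=p^{\text{Codim}(H_1)}\alpha$; the factor $p^{\text{Codim}(H_1)}$ is fatal because $\text{Codim}(H_1)$ can be as large as $M'_p(g)$. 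The claim itself fails: in $\mathbb{F}_p^2$ with $H_1$ the first coordinate axis, take $f(x,y)=\tfrac12+\tfrac14\cos(2\pi xy/p)+\tfrac14\cos(4\pi xy/p)$ for $y\neq 0$ and $f(x,0)=\tfrac12$. All nontrivial global Fourier coefficients of $f$ are $O(1/p)$, so $H_1$ is $O(1/p)$-weakly-regular, yet on each coset $T_y$ with $y\neq 0$ one checks $\Lambda(f|_{T_y})-\alpha(T_y)^3=1/256$; hence the density of $3$-APs with common difference in $H_1$ exceeds $b(H_1)$ by about $1/256$ while the weak-regularity parameter tends to zero.

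The regularity that the arithmetic strong regularity lemma actually delivers --- and which the paper uses in the analogous arguments following Theorem \ref{regularitymeancube} and in the ``Beyond the random bound'' discussion --- is the \emph{local} notion from Green's arithmetic regularity lemma: all but a $g(m)$-fraction of the cosets $T$ of $H_1$ have $f|_T$ Fourier-close to the constant $\alpha(T)$ inside $T$. With that hypothesis the argument closes. Since $d\in H_1$ keeps all three points of a $3$-AP in a single coset, one has $\rho-b(H_1)=\mathbb{E}_T[\Lambda(e_T)]$ with $e_T=(f-\alpha(T))|_T$, every regular coset contributes $|\Lambda(e_T)|\leq g(m)\|e_T\|_2^2\leq g(m)$ by the Fourier proof of the counting lemma applied \emph{inside $T$}, and the irregular cosets contribute at most their total measure $g(m)$. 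Averaging gives $|\rho-b(H_1)|=O(g(m))$, and combining with $b(H_0)-g(0)\leq b(H_1)\leq b(H_0)$ finishes, after rescaling $g$. You should replace the global weak-regularity step with this coset-by-coset one. A secondary issue: your final ``this window sits inside $[b(H_0)-g(m),b(H_0)+g(0)]$'' has the asymmetry backwards; since $b(H_1)\leq b(H_0)$ by convexity and $b(H_1)\geq b(H_0)-g(0)$, the half-width on the \emph{lower} side of $b(H_0)$ is governed by $g(0)$, not $g(m)$, and no rescaling of $g$ alone repairs that.
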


\begin{thm}
The function $M'_p(g)$ in Theorem \ref{wowzer1} with $g(n)=\epsilon/(n+1)$ grows as wowzer in $\Theta(1/\epsilon)$.
\end{thm}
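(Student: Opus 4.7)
The proof establishes matching upper and lower bounds, both of wowzer type in $1/\epsilon$. For the upper bound, my plan is to combine the arithmetic strong regularity lemma with the counting lemma. I would apply the strong regularity lemma with the function $g$ to produce subspaces $H_0\subset H_1$ with $\textrm{Codim}(H_1)\le M_p(g)$, together with the mean-cube closeness and the $g(m)$-regularity of $H_1$ for $m=\textrm{Codim}(H_0)$. Since $f_{H_1}$ is constant on each translate of $H_1$, its $3$-AP density with common difference in $H_1$ equals $b(H_1)$ exactly, and the counting lemma (Lemma \ref{lem:Counting lemma}) shows that the corresponding $3$-AP density of $f$ differs from $b(H_1)$ by at most $3g(m)$. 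Combining with the closeness of $b(H_0)$ and $b(H_1)$ places the $3$-AP density with common difference in $H_1$ inside the required interval after absorbing constants into a rescaled $g$. Since rescaling $g(n)=\epsilon/(n+1)$ by an absolute constant does not change the wowzer order of $M_p$, we get that $M'_p(g)$ is at most wowzer in $\Theta(1/\epsilon)$.

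The lower bound is the substantive part. The plan is to construct a function $f$ on $\mathbb{F}_p^n$ with $n$ growing as wowzer in $\Theta(1/\epsilon)$, such that no pair $(H_0,H_1)$ with $\textrm{Codim}(H_1)$ of wowzer-forbidden height realizes the strong-regularity conclusion. I would iterate the tower-height construction underlying Theorem \ref{regularitymeancube1} in a nested fashion, following the blueprint of the graph strong regularity lower bound of Conlon and Fox \cite{CF12}. Recursively define $F_1,F_2,\ldots,F_K$ with $K=\Theta(1/\epsilon)$: let $F_1$ be the tower-height construction for Theorem \ref{regularitymeancube1} at parameter $\epsilon$, and for $\ell \ge 2$ obtain $F_\ell$ by taking an outer tower-height construction and replacing each constant-density building block by an appropriately rescaled copy of $F_{\ell-1}$ embedded on a fresh large direct factor. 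At each level, a fine subspace $H_0$ that stabilizes $b$ within $g(0)=\epsilon$ must resolve one additional nested copy, which multiplies $\textrm{Codim}(H_0)$ by at least a tower-type factor in the previous codimension; after $K$ iterations this forces $m=\textrm{Codim}(H_0)$ to be wowzer in $K$. Because $H_1$ must then be $g(m)$-regular at this wowzer scale, $\textrm{Codim}(H_1)$ itself must grow as wowzer in $\Theta(1/\epsilon)$.

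The hardest part is handling the coupled nature of the two constraints on $(H_0, H_1)$: in principle one might save codimension on $H_1$ by choosing $H_0$ coarser, weakening the regularity demand via a smaller $m$. The nested construction therefore must be designed so that every level contributes an irreducible $\Omega(\epsilon/K)=\Omega(\epsilon^2)$ increment to $b$ unless $H_0$ resolves that level's nested copy, making any significant coarsening of $H_0$ violate the mean-cube closeness $b(H_0) - b(H_1) \le g(0)$. Verifying this level-by-level separation, which parallels the layered argument in \cite{CF12} with arithmetic mean cube density in place of graph edge density and Fourier weak regularity in place of $\epsilon$-regularity, completes the matching lower bound. Because the construction closely mirrors the established templates and combines them with tools already used in this paper, the detailed verification can be deferred.
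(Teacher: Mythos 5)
Your proposal follows the paper's approach: the upper bound is derived from the arithmetic strong regularity lemma together with the counting lemma, and the lower bound comes from a nested construction modeled on the Conlon--Fox lower bound for the graph strong regularity lemma \cite{CF12}, iterating the tower-height construction of Theorem \ref{regularitymeancube1} on the order of $1/\epsilon$ times. The paper itself gives only a one-sentence sketch and explicitly omits the details for brevity, so your elaboration is consistent with the intended argument.
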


The upper bound follows directly from the proof of the arithmetic strong regularity lemma. The lower bound is by appropriately modifying the lower bound construction for Green's theorem with modifications similar to that used in \cite{CF12} to mimic the upper bound proof of the strong regularity lemma.  For brevity, we leave out the details of the proof.  

\vspace{0.2cm}
\noindent \textbf{Monochromatic arithmetic progressions with popular differences}
\vspace{0.1cm}

Van der Waerden's theorem \cite{vdw} states that, for all positive integers $k$ and $r$, there exists $W(k,r)$ such that if $N \geq W(k,r)$, then every $r$-coloring of $\mathbb{Z}_N$ contains a monochromatic $k$-term arithmetic progression. Many results in Ramsey theory are of this flavor, that in any finite coloring of a large enough system, there is a monochromatic pattern. In some instances, a stronger density-type theorem also holds, showing that any dense set contains the desired pattern. This is the case for van der Waerden's theorem, as Szemer\'edi's theorem \cite{Sz75} is such a strengthening, implying that the densest of the $r$ color classes will necessarily contain the desired arithmetic progression. Szemer\'edi's theorem states that for each positive integer $k$ and $\epsilon>0$, there is $S(k,\epsilon)$ such that, if $N \geq S(k,\epsilon)$, then any subset of $\mathbb{Z}_N$ of size at least $\epsilon N$ contains a $k$-term arithmetic progression. Note that Roth's theorem is the case $k=3$. By a Varnavides-type averaging argument, one can further show that a stronger, multiplicity version of van der Waerden's theorem (and of Szemer\'edi's theorem) holds, which shows that a fraction $c(k,r)-o(1)$ of the $k$-term arithmetic progressions must be monochromatic. Observe that a random coloring gives an upper bound on $c(k,r)$ of $r^{1-k}$. For $r>2$ it is possible to show that there are colorings with relatively few monochromatic arithmetic progressions, considerably smaller than the random bound. For example, using the Behrend construction giving a lower bound for Roth's theorem, one can construct an $r$-coloring of $\mathbb{Z}_n$ such that the fraction of three-term arithmetic progressions which are monochromatic is only $r^{-\Omega(\log r)}$, which is much less than the random bound of $r^{-2}$. 

Let $G$ be an abelian group of odd order. Note that, in a random coloring of $G$, for each nonzero $d$, the density of $3$-APs with common difference $d$ will likely be concentrated around $\frac{1}{r^2}$. Just like in the density version, we can get arbitrarily close to the random bound for the most popular difference. Green \cite{Green05} proved that, for $r$ fixed, the arithmetic regularity lemma in $G$ extends to $r$ subsets of $G$ (in particular, for the $r$ color classes in an $r$-coloring of $G$), so that the decomposition is regular with respect to each of the $r$ subsets. Green's proof of the density theorem on arithmetic progressions with popular differences extends in a straightforward manner to obtain the following coloring variant. Indeed, using this extension of the arithmetic regularity lemma and scaling the approximation parameter $\epsilon$ by $r$, as long as $|G| > N(\epsilon,r)$, there is a nonzero $d$ such that, for each color $i$, the density of $3$-APs with common difference $d$ which are monochromatic with color $i$ is at least $\alpha_i^3 - \frac{\epsilon}{r}$, where $\alpha_i$ is the density of color $i$. Summing over all $i$, we get the density of monochromatic $3$-APs with common difference $d$ is at least $\sum_{i=1}^r (\alpha_i^3 - \epsilon/r) \geq r(1/r)^3 -\epsilon = \frac{1}{r^2} -\epsilon$, where the inequality uses Jensen's inequality applied to the convex function $f(x)=x^3$ and the average of the $\alpha_i$ is $1/r$. 

\begin{thm}
For each $\epsilon>0$ and positive integer $r$, there is $N=N(\epsilon,r)$ such that if $G$ is an abelian group with odd order $|G| \geq N$, then for every $r$-coloring of $G$, there is a nonzero $d \in G$ such that the density of $3$-APs with common difference $d$ that are monochromatic is at least $\frac{1}{r^2}-\epsilon$.   
\end{thm}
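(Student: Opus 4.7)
The plan is to mirror Green's proof of Theorem 1 in the excerpt, but to regularize all $r$ color classes simultaneously and then lower-bound the resulting mean cube density across colors by Jensen's inequality. Fix the $r$-coloring with color classes $A_1,\ldots,A_r$ of densities $\alpha_1,\ldots,\alpha_r$, so $\sum_i \alpha_i = 1$. First, I would apply the weak regularity lemma to the tuple $(\mathbf{1}_{A_1},\ldots,\mathbf{1}_{A_r})$ with parameter $\delta=\epsilon/(3r)$, obtaining a subgroup $H$ of $G$ (or, in the general abelian case, a Bohr set, using the analogue developed in our companion work \cite{FPZ}) of bounded codimension on which every $\mathbf{1}_{A_i}$ is $\delta$-weakly-regular simultaneously. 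The multi-function version is standard: either run the energy increment on $\sum_i \|f_i-(f_i)_H\|_2^2$, or apply the single-function regularity lemma $r$ times in sequence, refining at each stage.

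Next, I would apply the counting lemma (Lemma \ref{lem:Counting lemma}) separately to each color class. For each $i$ and each nonzero $d\in H$, the counting lemma compares the $3$-AP density of $\mathbf{1}_{A_i}$ with common difference $d$ to the corresponding density of the averaged function $(\mathbf{1}_{A_i})_H$. Since this averaged function is constant on each coset $H+g$ with value $\alpha_i(H+g)$ and $d\in H$, the $3$-AP density of $(\mathbf{1}_{A_i})_H$ with common difference $d$ is exactly $\mathbb{E}_g[\alpha_i(H+g)^3]=b_i(H)$. Hence the density of monochromatic $3$-APs of color $i$ with common difference $d$ is at least $b_i(H)-3\delta$.

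Summing over colors and using the constraint $\sum_{i=1}^r \alpha_i(H+g)=1$ for each $g$, Jensen's inequality applied to the convex function $x\mapsto x^3$ gives $\sum_{i=1}^r \alpha_i(H+g)^3 \geq r\cdot(1/r)^3 = 1/r^2$. Averaging over $g$ yields $\sum_{i=1}^r b_i(H) \geq 1/r^2$, so for every nonzero $d\in H$ the total density of monochromatic $3$-APs with common difference $d$ is at least
\[
\sum_{i=1}^r \bigl(b_i(H)-3\delta\bigr) \;\geq\; \frac{1}{r^2}-3r\delta \;=\; \frac{1}{r^2}-\epsilon.
\]
Finally, choose $N(\epsilon,r)$ large enough that the regularity lemma applied with parameter $\delta=\epsilon/(3r)$ is guaranteed to produce a subgroup (or Bohr set) $H$ with $|H|\geq 2$, so that at least one nonzero $d\in H$ exists.

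The main obstacle I anticipate is not the averaging itself but the bookkeeping in two places: verifying that a multi-function version of the weak regularity lemma can be established with essentially the same bound as the single-function version (costing only a factor of $r$ in the codimension, through sequential refinement), and, in the general abelian setting $G\neq \mathbb{F}_p^n$, working with Bohr sets in place of subspaces, where one must check that ``coset''-type averaging and the counting lemma still produce the clean identity $\sum_i \alpha_i(H+g)=1$ on each approximate coset. In the vector-space case $G=\mathbb{F}_p^n$ both issues are immediate, so no new ideas beyond the regularity-plus-counting-plus-Jensen paradigm are required.
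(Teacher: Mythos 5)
Your high-level outline --- regularize all $r$ color classes simultaneously, compare to the averaged step function via the counting lemma, and lower-bound $\sum_i b_i(H)$ by Jensen's inequality --- does match the paper's approach, but two of your intermediate steps would not go through as written. First, a single application of the weak regularity lemma (which only produces $\operatorname{Codim}(H)=O(\delta^{-2})$) is not enough: $\delta$-weak-regularity asserts that $\widehat{\mathbf{1}_{A_i}}$ and $\widehat{(\mathbf{1}_{A_i})_H}$ are uniformly close, which is a \emph{global} Fourier statement on $G$. What the counting argument actually needs, in order to relate $3$-APs \emph{with common difference in} $H$ to the mean cube density $b_i(H)$, is a \emph{coset-wise} uniformity statement: that $\mathbf{1}_{A_i}$ restricted to almost every coset $H+g$ is Fourier-close, as a function on $H$, to the constant $\alpha_i(H+g)$. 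Getting that local statement requires iterating the weak regularity lemma (i.e.\ Green's full arithmetic regularity lemma for $r$ sets), which is precisely where the tower-type bound comes from. Indeed, your argument as written would yield $N(\epsilon,r)$ polynomial in $1/\epsilon$, but the paper establishes a tower-type lower bound of height $\Theta_r(\log(1/\epsilon))$ for every fixed $r\geq 3$, so the polynomial bound cannot be correct.

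Second, the counting lemma controls $|\Lambda(f)-\Lambda(g)|$, the $3$-AP densities \emph{averaged over all common differences}; it says nothing about the density at a single fixed $d$. Fourier closeness does not control individual differences, only their average. So the deduction ``the density of monochromatic $3$-APs of color $i$ with common difference $d$ is at least $b_i(H)-3\delta$,'' asserted for each nonzero $d\in H$, is not justified and is in general false --- as is the final claim that the bound $1/r^2-\epsilon$ holds for \emph{every} nonzero $d\in H$. What the regularity-plus-counting machinery really gives is that the average over $d\in H$ of the total monochromatic $3$-AP density is at least $\sum_i b_i(H)-O(r\delta)\geq 1/r^2-\epsilon$, from which you must then \emph{average over} nonzero $d\in H$ (the $d=0$ contribution being negligible once $|H|$ is large) to extract a single popular $d$. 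That final averaging step is absent from your proposal and is what actually produces the nonzero $d$ the theorem asserts.
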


Picking the most popular of the $r$ colors, we have the following corollary. 

\begin{cor}
For each $\epsilon>0$ and positive integer $r$, there is $N'=N'(\epsilon,r)$ such that if $G$ is an abelian group of odd order with $|G| \geq N'$, then for every $r$-coloring of $G$, there is a nonzero $d$ and a color $i$ such that the density of $3$-APs with common difference $d$ that are monochromatic in color $i$ is at least $\frac{1}{r^3}-\epsilon$.   
\end{cor}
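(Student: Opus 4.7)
The plan is to deduce this corollary from the preceding theorem by a straightforward averaging over colors. Given $\epsilon>0$ and a positive integer $r$, I will set $N'(\epsilon,r)=N(\epsilon/r,r)$ (equivalently, $N(\epsilon,r)$ would also work with a slightly stronger conclusion). For any abelian group $G$ of odd order with $|G|\ge N'$ and any $r$-coloring of $G$, the preceding theorem applied with approximation parameter $\epsilon/r$ produces a nonzero $d\in G$ such that the density of monochromatic $3$-APs with common difference $d$, summed over all colors, is at least $\frac{1}{r^2}-\epsilon/r$.

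Now the total monochromatic density for common difference $d$ splits as $\sum_{i=1}^r \mu_i(d)$, where $\mu_i(d)$ denotes the density of $3$-APs with common difference $d$ that are monochromatic in color $i$. By pigeonhole there exists a color $i$ with
\[
\mu_i(d)\ \ge\ \frac{1}{r}\left(\frac{1}{r^2}-\frac{\epsilon}{r}\right)\ =\ \frac{1}{r^3}-\frac{\epsilon}{r^2}\ \ge\ \frac{1}{r^3}-\epsilon.
\]
The pair $(d,i)$ satisfies the required conclusion, which completes the proof.

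There is essentially no obstacle: the entire quantitative content lies in the preceding theorem, and the reduction uses only the decomposition of monochromatic $3$-AP density into a sum over color classes together with pigeonhole. The only minor bookkeeping is the choice of the auxiliary approximation parameter (we applied the theorem with $\epsilon/r$ rather than $\epsilon$) so that the additive error survives the division by $r$; this costs only a constant factor in the quantitative bound for $N'(\epsilon,r)$ versus $N(\epsilon,r)$.
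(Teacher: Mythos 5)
Your proof is correct and matches the paper's (implicit) argument: the paper simply notes the corollary follows by "picking the most popular of the $r$ colors," i.e., applying pigeonhole to the sum over color classes, exactly as you do. Your use of the auxiliary parameter $\epsilon/r$ is unnecessary (applying the theorem with $\epsilon$ already gives $\frac{1}{r^3}-\frac{\epsilon}{r}\ge\frac{1}{r^3}-\epsilon$), as you yourself note, but this does not affect correctness.
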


For $r=2$, these results are actually quite simple to prove and with bounds that are much better than applying the arithmetic regularity lemma because of the folklore observation that the total number of monochromatic $3$-APs is determined by the size of the first color class. Indeed, if $R$ and $B$ are the two color classes, so $|B|=|G|-|R|$, then we can count the number $P$ of three-term arithmetic progressions with a distinguished pair of elements of the same color in two different ways. First, for each pair of elements there are three arithmetic progressions containing that pair, so we get $P=3{|R| \choose 2}+3{|B| \choose 2}$. Alternatively, every three-term arithmetic progression contains one or three such monochromatic pairs, and there are ${|G| \choose 2}$ such three-term arithmetic progressions in $G$, so we also get $P={|G| \choose 2}+2M$, where $M$ is the number of monochromatic three-term arithmetic progressions. We thus get $M=\frac{1}{2}\left(3{|R| \choose 2}+3{|B| \choose 2}-{|G| \choose 2}\right)=\frac{|G|^2}{2}-\frac{3}{2}|R|(|G|-|R|)-\frac{|G|}{2}$. This is maximized when $|R|=\left\lfloor \frac{|G|}{2} \right\rfloor=(|G|-1)/2$, and we get $(|G|^2-4|G|+3)/8$ monochromatic arithmetic progressions in this case. Thus, the density of monochromatic $3$-APs is at least $\left((|G|^2-4|G|+3)/8\right)/{|G| \choose 2}=\frac{1}{4}-\frac{3}{4|G|}$. Hence, as long as $|G| \geq \frac{3}{4}\epsilon^{-1}$, then the density of monochromatic $3$-term APs at least $\frac{1}{4}-\epsilon$, and it follows that $N(\epsilon,2) \leq \frac{3}{4}\epsilon^{-1}$. Thus we get a linear upper bound in $1/\epsilon$ on $N(2,\epsilon)$, much smaller than the tower-type bound that comes from applying the arithmetic regularity lemma.

However, for $r \geq 3$, there is no such simple formula for the number of monochromatic $3$-APs in a $r$-coloring. In fact, we can prove a coloring variant of Theorem 2 in \cite{FPI}, showing that $N(\epsilon,r)$ and $N'(\epsilon,r)$ for $r \geq 3$ grow as a tower of twos of height $\Theta_r(\log (1/\epsilon))$. 

\begin{thm}
For $r \geq 3$ fixed, we have $N(\epsilon,r)$ and $N'(\epsilon,r)$ grow as a tower of twos of height $\Theta_r(\log(1/\epsilon))$. 
\end{thm}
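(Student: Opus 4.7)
The upper bound for $N'(\epsilon,r)$ follows immediately from Green's theorem (Theorem \ref{thm:Original theorem}) together with the tower-of-twos upper bound of height $\Theta(\log(1/\epsilon))$ on $n_p(\alpha,\beta)$ from \cite{FPI} (or Theorem \ref{firstuppthm}): the most popular color has density at least $1/r$, so applying that bound with parameters $(1/r,1/r^3-\epsilon)$ produces a nonzero $d$ along which that color has $3$-AP density at least $1/r^3-\epsilon$, inside an ambient space of dimension a tower of twos of height $\Theta_r(\log(1/\epsilon))$. For $N(\epsilon,r)$, I plan to adapt the density-increment machinery of Section \ref{sectionupperbound} to the multi-color setting: if in some subspace $H$ every nonzero $d$ satisfies $\sum_i\rho_i(d)<1/r^2-\epsilon$, then averaging over $d$ and using Jensen's inequality $\sum_i\alpha_i^3\ge 1/r^2$ gives $\sum_i\epsilon_i(H)>\epsilon$, where $\epsilon_i(H)=\alpha_i^3-\tilde\rho_i(H)$, so by pigeonhole some color $i$ has $\epsilon_i(H)>\epsilon/r$, and Lemma \ref{lem:mean cubed density increment_small} applied to $f_i$ yields $H'\subset H$ with $b_i(H')-\alpha_i^3>2(b_i(H)-\alpha_i^3)+\epsilon/(2r)$. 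The crucial point is that this geometric doubling caps out individually for each color, since $b_i(H')\le\alpha_i\le 1$, so each color can be incremented at most $O(\log(r/\epsilon))$ times; hence the total number of increments across all $r$ colors is at most $O_r(\log(1/\epsilon))$ before the process must terminate with a popular $d$, giving the required tower-of-twos ambient dimension since each increment multiplies the codimension by a tower factor.

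For the lower bound, the plan is to adapt the recursive construction of Section \ref{sectionlowerbound} and \cite{FPI} to produce a genuine $r$-coloring rather than a single weighted set. Build the coloring through $s=\Theta_r(\log(1/\epsilon))$ levels in an ambient $\mathbb{F}_p^n$: start from a balanced $r$-coloring of a constant-dimensional block, and at each subsequent level append a new coordinate block, partition it into $p$ parallel affine hyperplanes defined by dot products with pseudo-randomly chosen direction vectors, and rebalance color weights across these hyperplanes so that each newly-bad common difference $d$ has total monochromatic $3$-AP density drop by $\Omega(\epsilon/r)$ from the random bound $1/r^2$, while older bad common differences retain their small densities via the pseudo-randomness of the directions. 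Convert the resulting fractional coloring to a $\{0,1\}$-valued partition by applying Lemma \ref{lem:density to weighted} to each color with a union bound over the $r$ colors. The same construction yields a matching lower bound for $N'(\epsilon,r)$, since density $1/r^2-\epsilon$ on $\sum_i\rho_i(d)$ forces density $1/r^3-\epsilon/r$ on the largest color.

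The principal technical obstacle in the upper bound is that the density-increment lemma operates on a single color at a time, and the color selected for increment changes from step to step; this is resolved by the per-color geometric cap above, which bounds each color's increment count independently rather than relying on a combined potential function. The principal obstacle in the lower bound is maintaining a valid $\{0,1\}$-valued $r$-coloring through the recursive perturbation step while still achieving the $\Omega(\epsilon/r)$ density drop at each level; this is handled by first working with $[0,1]$-valued weighted colorings (saturating weights at $0$ and $1$ where necessary, as in Section \ref{verydensesection}) and then rounding to a genuine partition via Lemma \ref{lem:density to weighted} at the end.
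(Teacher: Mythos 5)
The paper does not contain a proof of this theorem; it explicitly defers to the companion paper cited as \cite{FP} (``Monochromatic popular differences, in preparation''), so there is no in-paper argument to compare against. With that caveat, here is an assessment of your sketch on its own terms.

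Your upper bound plan in $\mathbb{F}_p^n$ is plausible. For $N'$, applying the single-set bound to the densest color class indeed works, and for $N$, the multi-color density increment with a pigeonhole step (some color $j$ has $3$-AP density with nonzero common difference in $H$ less than $\alpha_j^3-\epsilon/r$) followed by Lemma \ref{lem:mean cubed density increment_small} for that color is sound; the per-color cap argument then bounds the total number of increments by $O_r(\log(1/\epsilon))$ as you describe. However, $N(\epsilon,r)$ and $N'(\epsilon,r)$ are defined over all finite abelian groups of odd order, not just $\mathbb{F}_p^n$; to establish the upper bound in the stated generality one needs the Bohr set analogue of this machinery (which the paper attributes to \cite{FPZ}), and your plan does not address that. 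The lower bound, by contrast, can legitimately be carried out in $\mathbb{F}_p^n$.

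The lower bound sketch has two genuine gaps. First, the step ``the same construction yields a matching lower bound for $N'(\epsilon,r)$, since density $1/r^2-\epsilon$ on $\sum_i\rho_i(d)$ forces density $1/r^3-\epsilon/r$ on the largest color'' is false as stated: $\sum_i\rho_i(d)<1/r^2-\epsilon$ places no useful upper bound on $\max_i\rho_i(d)$ (it could be as large as $1/r^2-\epsilon$, far above $1/r^3$). A lower bound for $N'$ requires constructing a coloring in which \emph{every} $\rho_i(d)$ is less than $1/r^3-\epsilon'$, which is a strictly stronger requirement demanding a color-balanced construction, not a consequence of bounding the sum. Second, the pointwise constraint $\sum_{i=1}^r f_i(x)=1$ is the central new obstacle compared to the single-set construction of Section \ref{sectionlowerbound}, and your plan does not concretely explain how to perturb $r$ coupled weight functions at each level so that the total monochromatic $3$-AP density drops below $1/r^2$ for the new common differences while keeping all $f_i\in[0,1]$ and $\sum_i f_i\equiv 1$. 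Saturating one function as in Section \ref{verydensesection} does not carry over to $r$ interlocking constraints. Relatedly, your sketch never uses $r\ge 3$, yet the paper notes $N(\epsilon,2)$ grows only linearly in $1/\epsilon$ because the monochromatic $3$-AP count for $r=2$ is determined by the sizes of the color classes; a correct lower bound construction must break this rigidity, and your plan should identify where $r\ge 3$ enters.
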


Further discussion and proofs are contained in \cite{FP}.

\vspace{0.2cm}
\noindent \textbf{Linear equations}
\vspace{0.1cm} 

A well known conjecture of Sidorenko \cite{Si93} and Erd\H{o}s-Simonovits \cite{Simon} states that if $H$ is a bipartite graph, then the random graph with edge density $\alpha$ has in expectation asymptotically the minimum density of copies of $H$ (which is $\alpha^{e(H)}$) over all graphs with the same number of vertices and edge density. Simple constructions show that the assumption that $H$ is bipartite is necessary. A stronger conjecture, known as the forcing conjecture, states that if $H$ is bipartite and contains a cycle, and $G$ is a graph with edge density $\alpha$ and $H$-density $\alpha^{e(H)}+o(1)$, then $G$ is quasirandom with edge density $\alpha$. Sidorenko's conjecture and the stronger forcing conjecture are still open but are now known to be true for a large class of bipartite graphs, see \cite{CoFoSu,CKLL,CoLee,Ha,KLL,LiSz,Sz}.  

Saad and Wolf \cite{SaWo} began the systematic study of analogous questions for linear systems of equations in finite abelian groups. While much of this discussion extends to general finite abelian groups,  for simplicity we restrict our attention to $\mathbb{F}_p^n$. Let $L(x_1,...,x_k)=\sum_{i=1}^{k} a_ix_i$ be a linear form with coefficients $a_i \in \mathbb{F}_p \backslash \{0\}$. The linear homogeneous equation $L=0$ is called {\it Sidorenko} if for every subset $A \subset \mathbb{F}_p^n$ of density $\alpha$, the density of solutions to $L=0$ which are in $A$ is at least $\alpha^k$, the random bound. We say that the equation $L=0$ is {\it matched} if the coefficients of $L$ can be paired up so that each pair sums to zero. It is a simple application of the Cauchy-Schwarz inequality that if $L=0$ is matched, then it is Sidorenko. Zhao and the authors \cite{FPZ2} recently proved that $L=0$ is Sidorenko if and only if it is matched. 

The equation $L=0$ is called {\it common} if for every $2$-coloring of $\mathbb{F}_p^n$, the density of monochromatic solutions to $L=0$ is at least $2^{1-k}$, the random bound.  It is easy to see that if $L=0$ is Sidorenko, then it is common. Cameron et al. \cite{Cameron} observed that if $k$ is odd, then $L=0$ is common. Saad and Wolf \cite{SaWo} conjectured that if $k$ is even, then $L=0$ is common if and only if it is matched. Zhao and the authors \cite{FPZ2} also proved this conjecture.  

The popular difference property can be generalized to general linear equations. We say that $L=0$ is {\it popular} if, for each $\epsilon>0$, if $n \geq n_L(\epsilon)$ and $A \subset \mathbb{F}_p^n$ has density $\alpha$, then there are nonzero and distinct $d_1,\ldots,d_{k-1}$ such that the density of solutions to $L$ with $x_{i+1}-x_1=d_i$ for $i=1,\ldots,k-1$ is at least $\alpha^k - \epsilon$. In particular, when $k=3$, and $a_1=a_2=1$ and $a_3=-2$ (viewed as elements of $\mathbb{F}_p$), then $n_L(\epsilon)=\max_{\alpha} n_p(\alpha,\alpha^3-\epsilon)$. Note that if $L=0$ is Sidorenko, then simply by averaging, $L=0$ is popular and furthermore, $n_L(\epsilon)$ is bounded above by $O(\log(1/\epsilon))$. We say that the linear homogeneous equation $L=0$ is translation invariant if and only if the sum of the coefficients is zero. Theorem \ref{thm:Original theorem} can be extended to show that $L=0$ is popular if and only if $L=0$ is translation invariant. Indeed, if $L=0$ is not translation invariant, then the affine subspace $S$ of codimension one (so density $\alpha=1/p$) consisting of those elements whose first coordinate is $1$ has no solution to $L=0$. Hence, it follows that $n_{L}(\epsilon)$ does not exist for $\epsilon < 1/p^k$. On the other hand, if $L=0$ is translation invariant, then it follows from the arithmetic regularity lemma proof that there is a regular subspace $H$, and the counting lemma and Jensen's inequality gives that the density of $L$ with $x_1,\ldots,x_k$ all in the same translate of $H$ is at least almost $\alpha^k$. By throwing out the solutions with $x_{i}=x_{j}$ for some $i\ne j$ (which is of smaller order) and averaging, we get that there exists nonzero $d_1,\ldots,d_{k-1} \in H$ for which the density of solutions to $L$ with $x_{i+1}-x_1=d_i$ for $1 \leq i \leq k-1$ is at least $\alpha^k-\epsilon$. This proof gives an upper bound on $n_L(\epsilon)$ which is a tower of height $\epsilon^{-O(1)}$. In fact we can directly adapt the proof of Theorem 2 in \cite{FPI} to show that $n_L(\epsilon)$ is bounded above by a tower of height $\Theta(\log(1/\epsilon))$, using a density increment argument with the mean $k$-th power density, defined as $b_k(H)=\mathbb{E}_{x}[f_H(x)^k]$. 

We note that our construction of the lower bound in Theorem 2 of \cite{FPI} and the lower bound in Theorem \ref{main2} heavily depends on the fact that the equation $x_1-2x_2+x_3=0$ is not Sidorenko, as a crucial ingredient of our construction is a model function with low $3$-AP density. As mentioned above, if $L=0$ is Sidorenko, then the lower bound on $n_L(\epsilon)$ is not of tower type, but in fact, only logarithmic in $\epsilon$. However, the tower-type lower bound in Theorem 2 of \cite{FPI} does not necessarily hold for a general linear equation which is not Sidorenko. In fact, we can construct explicit examples of linear homogeneous equations in $2^t$ variables for $t\ge 3$ which are not Sidorenko but $n_L(\epsilon)=O(\log(\epsilon^{-1}))$. On the other hand, when the number of variables in the equation $L=0$ is $3$ as in the case of $3$-APs, we can adapt directly the proof of Theorem 2 of \cite{FPI} to show that if an equation $L(x_1,x_2,x_3)=0$ is not Sidorenko, then $n_L(\epsilon)$ grows as a tower of height proportional to $\log(\epsilon^{-1})$. 

The above discrepancy suggest that it could be very interesting to understand and characterize the growth rate of $n_L(\epsilon)$ for linear equations in at least four variables. In fact, we do not know if there are any linear equations in at least four variables for which $n_L(\epsilon)$ grows as a tower function. 

We refer the reader to \cite{FPZ2} for further details and questions.

\vspace{0.2cm}
\noindent \textbf{Beyond the random bound when there are relatively few total arithmetic progressions}
\vspace{0.1cm} 

We can strengthen Green's theorem, Theorem \ref{thm:Original theorem}, as follows. If our space is large enough and the total density of three-term arithmetic progressions is substantially less than the random bound, then there is a nonzero $d$ for which the density of three-term arithmetic progressions with common difference $d$ is substantially larger than the random bound. 

\begin{thm}
For each $\alpha$ and $\beta<\alpha^3$, there is $\gamma>\alpha^3$ such that the following holds. For each $\delta>0$ there is $n'_p(\delta)$ such that if $n>n'_p(\delta)$ and $f:\mathbb{F}_p^n\rightarrow [0,1]$ has $3$-AP density at most $\beta$, then there is a nonzero $d$ such that the density of $3$-APs with common difference $d$ is at least $\gamma-\delta$. In particular, we can always take $\gamma=\alpha^3+(\alpha^3-\beta)/2$, and if $\alpha^3>2^{8+8C_p}\beta$, then we can take $\gamma=(\alpha^3/\beta)^{1/(2C_p)}\alpha^3$, where $C_p=\Theta(\log p)$ is the exponential constant in the arithmetic removal lemma. 
\end{thm}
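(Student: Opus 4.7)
The plan is a single-step density increment producing a subspace $H_1$ with $b(H_1)\ge\gamma$, followed by a refinement via density-increment-plus-arithmetic-regularity (as in the proof of Theorem \ref{regularitymeancube}) to obtain a subspace $H_2\subseteq H_1$ where the density of $3$-APs with common difference in $H_2$ is within $\delta/2$ of $b(H_2)\ge\gamma$, and finally pigeonhole to extract a popular $d$.

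By (\ref{closeLl}) applied to $H=\mathbb{F}_p^n$,
\[
\lambda(\mathbb{F}_p^n)=\frac{|G|\Lambda(f)-\mathbb{E}_x[f(x)^3]}{|G|-1}\le\frac{|G|\beta-\alpha^3}{|G|-1}<\beta=\alpha^3-\epsilon,
\]
where $\epsilon:=\alpha^3-\beta>0$, using $\Lambda(f)\le\beta$ together with $\mathbb{E}_x[f(x)^3]\ge\alpha^3$ (Jensen). For the first bound $\gamma=\alpha^3+(\alpha^3-\beta)/2$, Lemma \ref{lem:mean cubed density increment_small} applied to $H_0=\mathbb{F}_p^n$ with this $\epsilon$ yields a subspace $H_1$ of bounded codimension with $b(H_1)\ge\alpha^3+\epsilon/2=\gamma$ (since $b(H_0)=\alpha^3$). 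For the stronger bound $\gamma=(\alpha^3/\beta)^{1/(2C_p)}\alpha^3$ in the regime $\alpha^3>2^{8+8C_p}\beta$, Lemma \ref{lem:mean cubed density increment_large} applied once with parameter $\beta$ similarly yields $b(H_1)\ge\gamma$; its hypotheses $b(H_0)=\alpha^3\ge 2^{8+8C_p}\beta$ and $|H_0|\ge 2\alpha/\beta$ hold for $n$ large, and the density of nontrivial $3$-APs in $\mathbb{F}_p^n$ is strictly below $\beta$ as just observed.

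Next I would iterate the density-increment-plus-arithmetic-regularity argument underlying Theorem \ref{regularitymeancube}, seeded by $H_1$ rather than $\mathbb{F}_p^n$, to produce a subspace $H_2\subseteq H_1$ of tower-in-$1/\delta$ codimension satisfying $\mathbb{E}_{d\in H_2}[\Lambda_d(f)]\ge b(H_2)-\delta/2$. Since $H_2\subseteq H_1$, Jensen's inequality gives $b(H_2)\ge b(H_1)\ge\gamma$. Subtracting the trivial-$d$ contribution (which is $\Lambda_0(f)/|H_2|\le \alpha/|H_2|$),
\[
\mathbb{E}_{d\in H_2\setminus\{0\}}[\Lambda_d(f)]\ge b(H_2)-\delta/2-\alpha/|H_2|\ge\gamma-\delta,
\]
provided $n$ is large enough that $|H_2|\ge 2\alpha/\delta$; pigeonhole then produces the desired nonzero $d\in H_2$ with $\Lambda_d(f)\ge\gamma-\delta$.

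The main obstacle is the refinement step: weak regularity alone is insufficient because the standard counting lemma (Lemma \ref{lem:Counting lemma}) controls only the total $\Lambda(f)$ averaged over all $d\in G$, rather than the average over $d$ restricted to a proper subspace. One therefore needs the arithmetic regularity lemma, or the machinery underlying Theorem \ref{regularitymeancube}, which forces $n'_p(\delta)$ to be of tower type in $1/\delta$; since the theorem only asserts the existence of $n'_p(\delta)$ without any quantitative bound, this is acceptable.
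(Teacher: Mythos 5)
Your proposal is correct and follows essentially the same route as the paper: one application of Lemma \ref{lem:mean cubed density increment_small} (or Lemma \ref{lem:mean cubed density increment_large} in the stronger regime) to lift the mean cube density from $\alpha^3$ to $\gamma$, then the arithmetic regularity lemma inside $H_1$ plus the counting lemma to make the average $3$-AP density over differences in $H_2$ at least $b(H_2)-\delta/2\ge\gamma-\delta/2$, then subtracting the $d=0$ contribution and pigeonholing. The one extra thing you do, which the paper leaves implicit, is the explicit verification via \eqref{closeLl} and Jensen that the nontrivial $3$-AP density is strictly below $\beta$, confirming the hypotheses of the density-increment lemmas.
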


The proof is as follows. Let $H_0=\mathbb{F}_p^n$, so $b(H_0)=\alpha^3$. By Lemma \ref{lem:mean cubed density increment_small}, there is a subspace $H_1$ of bounded codimension with $b(H_1) \geq \alpha^3+(\alpha^3-\beta)/2$. If $\alpha^3 \geq 2^{8+8C_p}\beta$, by Lemma \ref{lem:mean cubed density increment_large}, we can instead find a subspace $H_1$ of bounded codimension with $b(H_1) \geq \beta(\alpha^3/\beta)^{1+1/(2C_p)} =(\alpha^3/\beta)^{1/(2C_p)}\alpha^3$.  
We then apply the arithmetic regularity lemma to find a subspace $H_2$ of $H_1$ of bounded codimension which is $\delta'$-regular, where $\delta'=\delta/8$. That is, all but at most a $\delta'$-fraction of the translates of $H_2$ are $\delta'$-regular, where an affine subspace $S$ is $\delta$-regular if the function $f$ is $\delta'$-close to the constant function $\mathbb{E}_{x \in S}[f(x)]$ on the subspace. By the counting lemma, Lemma \ref{lem:Counting lemma}, applied to each of the $\delta'$-regular translates of $H_2$, the average density of three-term arithmetic progressions with common difference in $H_2$ is at least $$\sum_{H_2+x~\textrm{is}~\delta'-\textrm{regular}} \mathbb{E}_{y \in H_2+x}[f(y)]^3-3\delta'\mathbb{E}_{y \in H_2+x}[f(y)] \geq b(H_2)-3\delta'-\delta'=b(H_2)-\delta/2,$$
where we used $\mathbb{E}_{y \in H_2+x}[f(y)] \leq 1$ and the density of translates of $H_2$ that are not $\delta'$-regular is at most $\delta'$. This includes the arithmetic progressions with common difference zero. As long as $H_2$ is sufficiently large, which holds as $n$ is sufficiently large to start with, then the $3$-AP density with common difference in $H_2$ is negligibly affected by whether the common difference zero arithmetic progressions are included or not. We thus get the average $3$-AP density with nonzero common difference in $H_2$ is at least $b(H_2)-\delta$, and hence there is a nonzero $d \in H_2$ for which the $3$-AP density with common difference $d$ is at least $b(H_2)-\delta$. By the lower bound $b(H_2) \geq b(H_1)$, which holds as $H_2$ is a subspace of $H_1$, this completes the proof. 

\vspace{0.2cm}
\noindent \textbf{Quasirandomness and arithmetic progressions}
\vspace{0.1cm} 

The study of quasirandomness in graphs and other structures have played an important role in combinatorics, number theory, and theoretical computer science. For graphs, Chung, Graham, Wilson \cite{CGW}, building on earlier work of Thomason \cite{Thomason1,Thomason2}, discovered many equivalent properties of graphs that are all shared (almost surely) by random graphs. Following this work, Chung and Graham \cite{ChGr} studied quasirandom properties in other combinatorial structures such as hypergraphs, permutations, boolean functions, and subsets of $\mathbb{Z}_N$. Here we focus on $\mathbb{F}_p^n$ with $p$ fixed as it is simpler due to the existence of subspaces, although it can be extended using Bohr sets to general abelian groups. 

For a set $A \subset \mathbb{F}_p^n$, we let $A:\mathbb{F}_p^n \rightarrow \{0,1\}$ denote the indicator function. That is $A(x) = 1$ if $x \in A$, and $A(x)=0$ otherwise. We let $N=p^n$. We say a set $A \subset \mathbb{F}_p^n$ is {\it $\epsilon$-quasirandom} if it satisfies the following property. 

\vspace{0.2cm}

\noindent $\mathcal{P}(\epsilon)$: For every affine subspace $H \subset \mathbb{F}_p^n$, we have $\left | |A \cap H| - |A||H|/N \right | \leq \epsilon N$.

\vspace{0.1cm}

As discovered by Chung and Graham \cite{ChGr} (in the setting of $\mathbb{Z}_N$), there are many equivalent properties up to changing $\epsilon$. For example, 
the set $A$ having all nonzero Fourier coefficients at most $\epsilon$ is such an equivalent property. Another example is that for all but at most $\epsilon N$ elements $x \in \mathbb{F}_p^n$, the size of the intersection of $A$ and its translate $A+x$ is within $\epsilon N$ of $|A|^2/N$. Yet another such property is that the Cayley sum graph of $A$ is $\epsilon$-quasirandom. This relates quasirandomness of subsets of $\mathbb{F}_p^n$ to the quasirandomness of an associated graph. 

A graph $H$ is forcing if, for every fixed $0<\alpha<1$, a graph with edge density $\alpha$ and $H$-density $\alpha^{e(H)}+o(1)$ is necessarily quasirandom. It is not hard to show that if $H$ is acyclic or not bipartite, then $H$ is not forcing, and the forcing conjecture is that all other graphs are forcing. However, Simonovits and S\'os \cite{SiSo} proved that if $H$ is a fixed graph with at least one edge, and $G$ is a graph on $n$ vertices 
such that every vertex subset $S$ has $\alpha^{e(H)}|S|^{v(H)}+o(n^{v(H)})$ ordered copies of $H$, then the graph is necessarily quasirandom.  In particular, if all linear-sized induced subgraphs of a graph 
have about the same density of triangles, then the graph is quasirandom. Their proof used Szemer\'edi's regularity lemma, and gave a weak (tower-type) bound on the dependency between the error parameter for this quasirandom property and the traditional quasirandom properties. They posed the problem of finding a new proof that avoids using Szemer\'edi's regularity lemma and gives a better dependency. This problem was recently solved by Conlon, Sudakov, and the first author \cite{CFS2}, giving a linear dependence for cliques, and a polynomial bound for other graphs.  

Proofs of Roth's theorem typically begin by observing that if the set is quasirandom, then a counting lemma shows that the total number of three-term arithmetic progressions is about the random bound. However, if the total number of three-term arithmetic progressions is about the random bound, then the set needs not be quasirandom. Motivating by this observation and the results of \cite{SiSo} and \cite{CFS2} in the case of triangles in graphs, it is natural to ask if there is a natural analogue of the quasirandom property of hereditary triangle counts in the arithmetic setting and how such a property relates to other notions of arithmetic quasirandomness quantitatively. In the arithmetic setting, we count three-term arithmetic progressions instead of triangles, and affine subspaces replace the role of vertex subsets. Indeed, we can prove that if the number of $3$-APs is not much larger than the random bound in any large affine subspace, then we do get quasirandomness. We have two different versions below. The first involves counting $3$-APs in a single affine subspace, and the second counts $3$-APs inside translates of a subspace, or equivalently, with common difference in the subspace. 

\vspace{0.2cm}

\noindent $\mathcal{Q}(\epsilon)$: For every affine subspace $H \subset \mathbb{F}_p^n$, we have $$\sum_{x,x+d \in H} A(x)A(x+d)A(x+2d) \leq |A|^3|H|^2/N^3 +\epsilon |H|N.$$


\noindent $\mathcal{R}(\epsilon)$: For every subspace $H \subset \mathbb{F}_p^n$, we have $$\sum_{x \in \mathbb{F}_p^n,d \in H} A(x)A(x+d)A(x+2d) \leq |A|^3|H|/N^2 + \epsilon N^2.$$


Both $\mathcal{Q}$ and $\mathcal{R}$ are quasirandom properties, that is, they are equivalent to $\mathcal{P}$ up to changing $\epsilon$. In particular, $\mathcal{P}(\epsilon)$ implies $\mathcal{Q}(8p^2\epsilon)$ and $\mathcal{R}(8p^2\epsilon)$. Conversely, $\mathcal{Q}(\delta)$ with $\delta=2^{-(p/\epsilon)^{O(1)}}$ implies $\mathcal{P}(\epsilon)$, whereas $\mathcal{R}(\delta)$ implies $\mathcal{P}(\epsilon)$ when $\delta^{-1}$ grows as a tower of $p$'s of height $\Theta(\log(1/\epsilon))$. Surprisingly, while the dependency between the quasirandom parameters of $\mathcal{Q}$ and $\mathcal{P}$ is only exponential and can be further improved, the dependency between the quasirandom parameters of $\mathcal{R}$ and $\mathcal{P}$ is of tower-type, which can be shown to be tight using the construction in the proof of Theorem \ref{thm:Lower bound for line density} in \cite{FPI}. This shows that, unlike for hereditary triangle counting where the dependency between the quasirandom parameters turns out to be linear, for property $\mathcal{R}$, the dependency turns out to be tower-type (of height logarithmic in $1/\epsilon$). We thus address with a somewhat unexpected answer the arithmetic analogue of the Simonovits-S\'os problem \cite{SiSo} (see also \cite{CoFoSu}) on the dependency between hereditary counts and other quasirandom properties. 

Further discussion and proofs are contained in \cite{FP2}.

\vspace{0.2cm}
\noindent \textbf{Popular restricted differences}
\vspace{0.1cm} 

There is now an extensive literature on strengthenings of Szemer\'edi's theorem in which the common difference lies in a particular set $S$. An early result of this sort is the Furstenberg-S\'ark\"ozy theorem \cite{Sarkozy}, which guarantees that any dense subset of the integers contains a pair of distinct elements whose difference is a perfect square. A result of Bergelson and Leibman \cite{BL96} implies that any dense subset of the integers contains a $k$-term arithmetic progression whose common difference is a perfect $r$th power. Another result of this type is when $S$ is the set of primes shifted by one, see \cite{FHK}. 
Quantitative bounds have also received much attention, see, e.g., Green \cite{Green02}. 

One naturally wonders whether similar strengthenings of Green's theorem hold, where the popular nonzero common difference must lie in a particular set $S$. A simple example in $\mathbb{F}_p^n$ is when $S$ is a subspace of codimension $D$, where $D$ is fixed. Indeed, such a result follows from the following somewhat stronger version of Theorem \ref{thm:Original theorem}.

\begin{thm}\label{subspacethm1} 
For each $\epsilon>0$, nonnegative integer $D$ and odd prime $p$, there is $\delta=\delta_p(\epsilon,D)>0$ such that for any subset of $\mathbb{F}_p^n$ and any subspace $H_0$ of codimension at most $D$, there is a subspace $H$ of $H_0$ with $|H| > \delta p^n$ such that the density of 
$3$-APs with common difference in $H$ is at least $b(H_0)-\epsilon$.
\end{thm}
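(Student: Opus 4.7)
Proof plan:

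The plan is to run the density-increment argument that proves Theorem \ref{firstuppthm}, but starting from the given $H_0$ instead of $\mathbb{F}_p^n$, with the mean cube density $b(H_0)$ playing the role of the baseline $\alpha^3$.

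The first step is to verify the following generalization of Lemma \ref{lem:mean cubed density increment_small}: if $H \subset H_0$ is a subspace with $|H| > 4/\epsilon$ and the density of $3$-APs with nonzero common difference in $H$ is strictly less than $b(H_0) - \epsilon$, then there is $H' \subset H$ with
\[
\text{Codim}(H') \le \text{Codim}(H) + p^{\text{Codim}(H)} \cdot 144/\epsilon^{2}
\]
and $b(H') - b(H_0) > 2(b(H) - b(H_0)) + \epsilon/2$. Reviewing the proof of Lemma \ref{lem:mean cubed density increment_small}, one sees that its argument — apply the weak regularity lemma inside each coset $H_j$ of $H$, use the counting lemma together with inequality (\ref{closeLlL}), and extract a gain in mean cube density from the $3$-AP-density hypothesis — never uses $\alpha^3$ for anything other than as a baseline value compared against $b(H)$. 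Since $b(H) \ge b(H_0)$ whenever $H \subset H_0$ (by convexity of $x \mapsto x^{3}$ applied to the refinement of cosets), replacing $\alpha^3$ by $b(H_0)$ throughout the proof yields the desired generalization with no other change.

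With the generalization in hand the main iteration is straightforward. Set $H_1 = H_0$; at stage $i$, if the $3$-AP density with nonzero common difference in $H_i$ is at least $b(H_0) - \epsilon/2$, stop and output $H = H_i$; otherwise apply the generalized lemma with parameter $\epsilon/2$ to obtain $H_{i+1} \subset H_i$ with $b(H_{i+1}) - b(H_0) > 2(b(H_i) - b(H_0)) + \epsilon/4$. The excess $b(H_i) - b(H_0)$ therefore at least doubles each step with an additional additive gain of order $\epsilon$, while being trapped in $[0, 1 - b(H_0)]$, so the process must terminate within $s = O(\log(1/\epsilon))$ steps. Iterating the codimension bound, $\text{Codim}(H_s)$ is at most a tower of $p$'s of height $O(\log(1/\epsilon))$ with $D + O(\epsilon^{-2})$ on top, so $|H| \geq \delta p^n$ for some $\delta = \delta_p(\epsilon,D) > 0$. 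Finally, the gap between the density of $3$-APs with common difference in $H$ (including $d = 0$) and the density with nonzero common difference in $H$ is at most $O(1/|H|)$, which is well below $\epsilon/2$; hence the former is at least $b(H_0) - \epsilon$, as required.

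The only genuine obstacle is confirming that the increment lemma truly only uses $\alpha^3$ as an abstract baseline — which reduces to inspecting the proof of Lemma \ref{lem:mean cubed density increment_small} in \cite{FPI} and noting that the bound $b(H) \ge b(H_0)$ is all that is needed to replace $\alpha^3$ by $b(H_0)$.
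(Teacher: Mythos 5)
Your plan is correct and is essentially the paper's own approach: the paper proves Theorem \ref{subspacethm1} by observing that the density-increment argument of Theorem \ref{firstuppthm} can be run starting from $H_0$ rather than $\mathbb{F}_p^n$, with $b(H_0)$ serving as the baseline. Your observation that Lemma \ref{lem:mean cubed density increment_small} uses $\alpha^3$ only as a comparison threshold is the right one — the Schur/counting argument underlying it produces a threshold-free inequality of the shape $b(H') + \lambda(H) \ge 2b(H) - O(\eta)$, from which the displayed conclusion follows for any baseline $T \le b(H)$, and $b(H) \ge b(H_0)$ guarantees the excess $b(H_i)-b(H_0)$ stays nonnegative so the doubling terminates in $O(\log(1/\epsilon))$ steps.
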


The proof of the above theorem can be obtained directly from our quantitative improvement of Green's theorem, showing that we can take $\delta_p(\epsilon,D)$ so that $1/\delta_p(\epsilon,D)$ grows as a tower of $p$'s of height $O(\log(1/\epsilon))$ with a $D$ on top. Thus, there is a popular nonzero common difference in $S$ if $n$ is larger than a tower of $p$'s of height $O(\log(1/\epsilon))$ with a $D$ on top. We can also modify the lower bound construction in the proof of Theorem \ref{thm:Lower bound for line density} in \cite{FPI} by starting with the partition into translates of the subspace $S$, and get a lower bound showing that this is essentially tight. In particular, in $\mathbb{F}_p^n$ with $p$ fixed, if $S$ is a subspace of codimension $D$, to guarantee that for any set there is a nonzero $d \in S$ for which the density of $3$-APs with common difference $d$ is at least $\epsilon$ less than the random bound, the smallest dimension $n$ we need is a tower of $p$'s of height $\Theta(\log (1/\epsilon))$ with a $D$ on top. 

It would be interesting to prove other strengthenings of Theorem \ref{thm:Original theorem} with restricted differences. For example, what if $S$ is a random set with a given density? Even the threshold probability for Roth's theorem with a random restricted difference set is poorly understood (see, e.g., \cite{FLW}), so this is likely a challenging problem. 

\appendix 
\section*{Appendix}
\section{Estimates}

\subsection{\bf Bounds between constants in the proof of Theorem \ref{thm:epsilon_large}}
\label{appenbounds}
We collect here several useful bounds on the constants used in the proof of Theorem \ref{thm:epsilon_large}.  For $i \geq 3$, we have \begin{equation}\label{similar} u_{i}=\sum_{j=1}^i r_j = r_1+r_2+\sum_{j=0}^{\infty} (1+\gamma)^{-j} r_i - 
\sum_{j=1}^{\infty} (1+\gamma)^{-j} r_3 = r_1+ r_2 +(1+\gamma^{-1})r_i - \gamma^{-1}r_3 \leq (1+\gamma^{-1})r_i. \end{equation} 

We also have $r_2 = .7864m_1 \geq .7864 \cdot 9999 \log_p(1/\beta) \geq 7000 \log p$, so for $i \geq 3$ we have \begin{equation}\label{riineq} r_{i+1} = (1+\gamma)r_i=r_i + \gamma r_i \geq r_i+\gamma r_2 \geq r_i + 10000,\end{equation} 
and it follows by induction on $i$ that 
\begin{equation}\label{rilowerbound}
r_i \geq 10000i
\end{equation} 
for $i \geq 1$. 
Recall that $|A_{p,r}| \geq p^{.782r}$ for $p \geq 19$ and $r \geq 10^6$. Hence, $|A_{p,r_j}|/R_j \geq R_j^{-.218}$ and  
$\mu_j \geq \prod_{i=1}^j R_i^{-.218} = U_j^{-.218}$. Therefore,  \begin{equation}\label{sim0} m_i = N_{i-1} \left(|A_{p,r_{i}}|/R_{i}\right)^2 \mu_{i-1}^{3} \beta \geq N_{i-1}  R_i^{-.436} U_{i-1}^{-.654} \beta. \end{equation}
For $i=2$, this gives 
\begin{equation}\label{sim23} m_2 >  N_{1}R_2^{-.436}R_1^{-.654}p^{-(m_1+1)/10000} =  p^{m_1-.436 \cdot .7864 \cdot m_1-.654m_1-(m_1+1)/10000} > p^{m_1/400}.\end{equation}

We next prove by induction on $i$ that, for $i \geq 2$, we have \begin{equation}\label{sim1} m_i \geq 2 \cdot 10^6 n_{i-1} \geq 2 \cdot 10^6 m_{i-1}\end{equation} 
and 
\begin{equation}\label{sim2} m_i \geq 10^6 u_i \geq 10^6r_{i}.\end{equation}

Indeed, for $i=2$, we have $m_2 > p^{m_1/400} >2 \cdot 10^6m_1=2 \cdot 10^6n_1$, where we used that $m_1> 10^6$ and $p \geq 19$. 
We have $u_2 = r_1+r_2 < 2m_1$, so (\ref{sim1}) for $i=2$ implies (\ref{sim2}) for $i=2$. 

Next suppose that $i>2$ and we have proven (\ref{sim1}) and (\ref{sim2}) for smaller values. By (\ref{sim0}), $r_i \leq 10^5u_{i-1}$, and the induction hypothesis, we have $$m_i \geq p^{n_{i-1}-.436r_i-.654u_{i-1}}\beta \geq p^{n_{i-1}-10^5u_{i-1}}\beta \geq p^{.9n_{i-1}}\beta \geq p^{n_{i-1}/2} \geq 2 \cdot 10^6n_{i-1}.$$ We also have $$u_i =r_i+u_{i-1} \leq 10^5r_{i-1}+u_{i-1} \leq 10^6u_{i-1} \leq m_{i-1}  \leq m_i/10^6,$$ completing the induction proof of (\ref{sim1}) and (\ref{sim2}). 

By (\ref{sim0}) and (\ref{sim2}), for $i \geq 4$, we have \begin{equation}\label{sim4} m_i \geq p^{n_{i-1}-.436r_i-.654u_{i-1}}\beta  \geq p^{n_{i-1}- 10^6u_{i-2}}\beta \geq p^{m_{i-1}+m_{i-2}+m_1-10^6u_{i-2}}\beta \geq p^{m_{i-1}}.\end{equation} 

Recall that $|A_{p,r}| \geq ((p+1)/2.0001)^{r-2}$ for $p \geq 19$ and $r \geq 10^6$. As $R_i=p^{r_i}$, we therefore have $R_i/|A_{p,r_i}| \leq p^2(2.0001/(1+1/p))^{r_i}$ and so $1/\mu_i \leq p^{2i}(2.0001/(1+1/p))^{u_i}$.  
For $i \geq 2$, we have 
\begin{eqnarray} \label{sim6} \frac{m_iR_i}{\mu_{i-1}N_{i-1}} & = & \frac{N_{i-1} \left(|A_{p,r_{i}}|/R_{i}\right)^2 \mu_{i-1}^{3} \beta \cdot R_i}{\mu_{i-1}N_{i-1}} =  \mu_{i}^{2} \beta R_i \geq ep^5.
\end{eqnarray}
Indeed, for $i=2$, we have 
\begin{eqnarray*} \mu_i^2 \beta R_i & \geq & p^{-8} (2.0001/(1+1/p))^{-2u_2}\beta p^{r_2} \geq p^{-8} (2.0001/(1+1/p))^{-4.5433r_2}p^{-(r_1+1)/10000} p^{r_2} 
\\ & \geq & p^{-9}(p^{.9998}\left((1+1/p)/2.0001\right)^{4.5433})^{r_2} \geq p^{-9+r_2/200} \geq ep^5,\end{eqnarray*} where we used $\beta \ge p^{-(r_1+1)/10000}$ in the second inequality, and we used $r_1 \le 2r_2$ in the third inequality.  
For $i \geq 3$, we have 
\begin{eqnarray*}\mu_i^2 \beta R_i & \geq &  p^{-4i} 2.0001^{-2u_i}\beta p^{r_i} \geq p^{-4i} 2.0001^{-2(\log p)r_i/3}\beta p^{r_i} 
\geq p^{-4i}\beta p^{.3r_i}  \geq p^{-4i}p^{r_i/5} \\ & \geq & ep^5,\end{eqnarray*} 
where the last inequality is by (\ref{rilowerbound}).

\section*{Acknowledgements}
We would like to thank David Fox for an observation in the game of SET which led to the study of multidimensional cap sets and this paper. We are indebted to Yufei Zhao for many helpful comments, including observing that Schur's inequality simplifies the density increment argument. We would like to thank Ben Green for many helpful comments. We would also like to thank David Conlon, Joel Spencer, Ron Graham, and Terry Tao for helpful discussions related to the paper. 

\bibliographystyle{amsplain}


\begin{dajauthors}
\begin{authorinfo}[jf]
  Jacob Fox\\
  Department of Mathematics, \\
  Stanford University, Stanford, CA 94305, USA\\
  jacobfox\imageat{}stanford\imagedot{}edu \\
\end{authorinfo}
\begin{authorinfo}[htp]
  Huy Tuan Pham \\
  Department of Mathematics, \\
  Stanford University, Stanford, CA 94305, USA\\
  huypham\imageat{}stanford\imagedot{}edu \\
\end{authorinfo}
\end{dajauthors}
\end{document}